\documentclass[11pt,reqno]{amsart}
\usepackage{amsmath} 
\usepackage{amssymb}
\usepackage{dsfont}
\usepackage[dvips,draft,final]{graphics}
\usepackage[T1]{fontenc}
\usepackage{fancyhdr}
\usepackage{url}
\usepackage[colorlinks,linktocpage,linkcolor=blue]{hyperref}

\usepackage{color}
\usepackage{graphicx}
\newtheorem{thm}{Theorem}[section]

\newtheorem{lem}{Lemma}[section]
\newtheorem{prop}{Proposition}[section]
\newtheorem{rem}{Remark}[section]

\newtheorem{definition}{Definition}
\renewenvironment{abstract}{%
        \small
        \quotation
         \noindent {\bfseries \abstractname } }%
      {\if@twocolumn\else\endquotation\fi}
\numberwithin{equation}{section}
\newcommand{\M}{\mathrm{M}}

\newcommand{\g}{\mathrm{g}}
\newcommand{\seq}[1]{\left<#1\right>}

\newcommand{\re}{\mathfrak R}

\renewcommand{\leq}{\leqslant}
\renewcommand{\geq}{\geqslant}
\providecommand{\abs}[1]{\left\lvert#1\right\rvert}
\providecommand{\norm}[1]{\left\lVert#1\right\rVert}
\newcommand{\bel}{\begin{equation} \label}
\newcommand{\ee}{\end{equation}}
\def\R{\mathbb R}

\def\Z{\mathbb Z}

\def\la{\lambda}

\def\pa{\partial}

\def\supp{\mathrm{supp}\, }

\renewcommand{\leq}{\leqslant}
\renewcommand{\geq}{\geqslant}
\def\epsilon{\varepsilon}
\def\phi {\varphi}

\newcommand{\p}{\partial}

\title[Inverse coefficient for the wave equation]{
Inverse coefficient problem for the wave equation with nonlocal attenuation}

\author[Yavar Kian, Zouhour Rezig and Gunther Uhlmann]{ Yavar Kian$^1$, Zouhour Rezig$^2$ and Gunther Uhlmann$^3$
}
\date{}

\begin{document}

\maketitle

%%%%%%%%%%%%%%%%%%%%%%%%%%%%%%%%%%%%%%%%%%%%%%%%%
%%%%%%%%%%%%%%%%%%%%%%%%%%%%%%%%%%%%%%%%%%%%%%%%%
\begin{abstract} In this article, we investigate the inverse problem of determining time-dependent first- and zeroth-order coefficients in a wave equation with nonlocal-in-time attenuation on a Riemannian manifold from boundary measurements. This problem is motivated by imaging modalities and viscoelasticity, where such equations arise naturally, and we aim at  characterizing the properties of the underlying medium through the recovery of corresponding coefficients. Our main objective is to exploit  memory properties of the solutions, induced by the nonlocal attenuation, to establish the recovery of a general class of lower-order coefficients from boundary measurements collected over an arbitrarily small time interval. As a byproduct of our analysis, we also establish recovery results for time-dependent coefficients from measurements supported on disjoint time intervals. While such restrictions on the data are unavailable, and in general impossible, for the classical wave equation, we prove that, under standard assumptions, they become feasible due to the presence of the nonlocal-in-time attenuation. Our analysis combines techniques from differential geometry, the theory of partial differential equations with nonlocal terms, complex analysis, and the theory of nonlocal operators.\\
 {\bf Keywords }  Inverse coefficient problem, uniqueness, wave equation with memory term, non-local operator.\\
{\bf Mathematics subject classification 2020 :} 35R30, 35L20, 26A33. 

\vskip 4.5mm

%\noindent{\bf AMS Subject Classifications } 35R11, {35R30, 35B35}

\end{abstract}

\renewcommand{\thefootnote}{\fnsymbol{footnote}}
\footnotetext{\hspace*{-5mm} 
\begin{tabular}{@{}r@{}p{16cm}@{}}
$^*$
&The work of Y. Kian is supported by the French National Research Agency ANR and Hong Kong RGC Joint Research Scheme for the project IdiAnoDiff (grant ANR-24-CE40-7039). The work of G. Uhlmann is partially supported by NSF.\\
$^1$ 
& Univ Rouen Normandie, CNRS, Normandie Univ, LMRS UMR 6085, F-76000 Rouen, France  (\texttt{yavar.kian@univ-rouen.fr})\\
$^2$
& University of Tunis El ManarFaculty of Sciences of Tunis ENIT-LAMSIN, B.P. 37, 1002 Tunis, Tunisia (\texttt{zouhour.rezig@fst.utm.tn})\\
$^3$
& Department of Mathematics, University of Washington , Seattle, WA 98195-4350, USA. (\texttt{gunther@math.washington.edu})
\end{tabular}}

%%%%%%%%%%%%%%%%%%%%%%%%%%%%%%%%%%%%%%%%%%%%%%%%%
%%%%%%%%%%%%%%%%%%%%%%%%%%%%%%%%%%%%%%%%%%%%%%%%%
\section{Introduction}
\label{sec-intro}
Let $(\M,\,\g)$ be a compact connected smooth $n$-dimensional Riemannian manifold with smooth boundary $\pa \M$. We denote the Laplace-Beltrami operator associated with the Riemannian metric $\g$ by $\Delta_\g$. In local coordinates,
the metric reads $\g=(\g_{jk})_{1\leq j,k\leq n}$, and the Laplace-Beltrami operator $\Delta_\g$ is given by
$$
\Delta_\g =\frac{1}{\sqrt{\abs{g}}}\sum_{j,k=1}^n\frac{\pa}{\pa
x_j}\pa{\sqrt{\abs{g}}\,g^{jk}\frac{\pa}{\pa x_k}}.
$$
Here $(g^{jk})_{1\leq j,k\leq n}$ is the inverse of the metric $g$ and $\abs{g}=\det((g_{jk})_{1\leq j,k\leq n})$. 
We introduce $T>0$, $\alpha\in(0,1)$ and $\partial_t^\alpha$,
the Caputo fractional derivative of order $\alpha $ with respect to $t$, defined by 
$$
\partial_t^{\alpha} u(t) := 
\frac{1}{\Gamma(1-\alpha)} \int_0^t (t-s)^{-\alpha}\partial_s u(s) ds,\ u\in W^{1,1}(0,T),\ t\in(0,T),
$$
where $\Gamma$ denotes the usual Gamma function. 
Then, we consider the following initial-boundary value problem (IBVP in short)
for the  wave equation with non-local attenuation
\begin{equation}
\label{eq1}
\left\{
\begin{aligned}
& \partial_t^2 u-\Delta_\g u +a(t,x)\partial_tu+ b(t,x) \partial_t^{\alpha} u +c(t,x)u  = 0, &&\quad (t,x)\in (0,T)\times \M,\\
& u(t,x) = f(t,x),                                                                                                      &&\quad (t,x)\in (0,T)\times\pa\M,\\
& u(0,x) = 0,\quad \partial_t u(0,x) = 0,                                             &&\quad x\in\M.
\end{aligned}
\right.
\end{equation}

In the present article, we study the inverse problem of determining the damping coefficient $a$ and the potential $c$ appearing in the hyperbolic equation \eqref{eq1} from measurements given by the knowledge of solutions on the lateral boundary $(0,T)\times \partial \M$. More precisely, by exploiting the memory properties of solutions to \eqref{eq1}, we investigate the following inverse problem:

\medskip

\textbf{(IP)} Determine the pair of time-dependent coefficients $(a,c)$ from measurements restricted to $(T-\varepsilon,T)\times \partial \M$, where $\varepsilon>0$ is arbitrarily small.

\medskip

Hyperbolic equations of the form \eqref{eq1}, involving a nonlocal attenuation term in time, are commonly used to describe various physical phenomena. These include imaging modalities such as tomography in biological tissues, which are often modeled by acoustic equations with frequency-dependent attenuation terms \cite{HP,Sz} expressed through time-fractional derivatives (see \cite{CH} and \cite[Chapter 6]{TTS}), as well as wave propagation in viscoelastic media \cite{AEE,SZS}. In this context, the objective of our inverse problem is to identify the damping coefficient $a$ and the potential $c$, which characterize different properties of the underlying medium in these physical models.

The recovery of coefficients appearing in hyperbolic equations is a fundamental topic in inverse problem theory and has attracted considerable attention over the past decades. For time-independent coefficients, one of the most powerful tools for solving this class of inverse problems is the so-called boundary control method, initiated in \cite{B,BK} (see also \cite{KKL}), which relies on a combination of finite speed of propagation and unique continuation properties \cite{RoZu,Ta}. Due to limitations of unique continuation for time-dependent coefficients \cite{AB}, the boundary control method has so far been restricted to coefficients that depend analytically on time \cite{Es}. As an alternative approach, several authors have developed methods based on the construction of special classes of solutions known as geometric optics solutions. Without attempting to be exhaustive, we mention the works \cite{BZ,FIKO,FeK,Ki1,Ki2,KiOk,SY}. We also refer to recent developments addressing similar inverse problems on Lorentzian manifolds \cite{AFO1,AFO2}. Concerning inverse problems for hyperbolic equations with memory terms, we mention works devoted to source recovery \cite{AP,AP2,HKSY}, as well as studies on the identification of time-independent coefficients \cite{BuID,BDU,Dy,Y}.
We also mention the work \cite{Z}, where the author investigated the simultaneous recovery of a time-independent coefficient and a semilinear term in a nonlinear attenuated wave equation involving the fractional Laplacian.

While several of the aforementioned methods allow one to relax the geometric condition on the measurement set and consider observations restricted to a subset of the lateral boundary of the form $(0,T)\times\Gamma$, where $\Gamma$ is a nonempty open subset of $\partial \M$ (see, e.g., \cite{KKL,LO}), restrictions on the time interval of observation are generally not permitted. As a consequence, even in the case of time-independent coefficients, problem \textbf{(IP)} remains open. Indeed, it is not clear whether such a result can be achieved in view of intrinsic limitations of wave equations, such as the finite speed of propagation.

The main goal of the present article is to exploit the memory effect of the solutions to problem \eqref{eq1}, induced by the nonlocal-in-time attenuation, in order to solve problem \textbf{(IP)}. This approach is in line with recent advances in inverse problem theory, where nonlinear effects have been successfully employed to solve inverse problems that remain open for linear hyperbolic equations \cite{KLU,KLOU,UZ}. The use of attenuation in the context of hyperbolic equations is also closely related to developments in control theory, where damping mechanisms are used to stabilize solutions and to quantify their decay rates \cite{AL,Ph}.

As an application of problem \textbf{(IP)}, we also discuss the recovery of coefficients from data measured on disjoint sets. Indeed, even for hyperbolic equations with time-independent coefficients, data on disjoint sets have so far only allowed the local recovery of lower-order coefficients \cite{KKLO}. This result was  extended to global recovery under the assumption that the manifold admits a convex foliation. As another application of \textbf{(IP)}, we introduce the notion of data on sets that are disjoint in time and study the recovery of both time-dependent and time-independent coefficients from this class of measurements.

This article is organized as follows. In Section~\ref{s2}, we present our main results concerning problem~\textbf{(IP)}, stated in Theorem~\ref{t1}, together with their applications to inverse problems with data on disjoint sets in time, established in Theorems~\ref{t2} and~\ref{t3}. In Section~\ref{s3}, we recall some preliminary results on the well-posedness and regularity of solutions to~\eqref{eq1}. In Section~\ref{s4}, we construct a suitable class of geometric optics solutions to~\eqref{eq1}, which will play a central role in the proofs of our main results. Section~\ref{s5} is devoted to the proofs of Theorems~\ref{t1}--\ref{t3}. Finally, in the Appendix, we establish the memory property stated in Theorem~\ref{memory} for time-fractional derivatives, which appears to extend existing results by substantially  relaxing the regularity assumptions.

\section{Main results}\label{s2}

Let us recall the notion of simple manifolds. We say that the boundary $\pa\M$ is strictly convex if  the second fundamental form  is positive-definite for any $x \in \pa\M$.
\begin{definition}
A manifold $M$ is simple if $\pa\M$ is
strictly convex and, for any $x\in\M$, the exponential map
$\exp_x:\exp_x^{-1}(\M)\to M$ is a diffeomorphism, which means that every arbitrary two points $x, y \in\M$ can be joined by a unique geodesic.
\end{definition}

Consider  $\mathcal J$ the space defined by
\bel{J}\mathcal J=\{h\in W^{3,1}(0,T;H^{\frac92}(\pa\M))\cap W^{5,1}(0,T;H^{\frac52}(\pa\M)):\partial_t^kf(0,\cdot)\equiv0,\ k=0,\ldots,4\}.\ee
We prove in Theorem \ref{t4} that, for $a,b,c \in C^3([0,T]\times \M)$ and  $f\in\mathcal J$, problem \eqref{eq1} admits a unique solution in $u\in C([0,T];H^4(\M))\cap C^2([0,T];H^2(\M))$. Then, we can define the partial hyperbolic Dirichlet-to-Neumann (DN in short) map  \bel{pDN}\Lambda_{a,c,\epsilon}:\mathcal J\ni f\mapsto (\partial_\nu u|_{(T-\epsilon,T)\times\pa\M},\partial_\nu \Delta_\g u|_{(T-\epsilon,T)\times\pa\M})\in L^2((T-\epsilon,T)\times\pa\M)^2,\ee where $\nu$ the outward unit normal vector field on $\pa\M$ with respect to the metric $g$ and $\epsilon\in(0,T)$. The inverse problem \textbf{(IP)} can now be reformulated as the identification of the coefficients $a$ and $c$ from the knowledge of $\Lambda_{a,c,\epsilon}$ with $\epsilon$ arbitrary small.

Recall that the domain of influence $\mathcal D$ for the time space manifold $(0,T)\times \M$ is given by
$$\mathcal D:=\{(t,x)\in (0,T)\times\M :\  \textrm{dist}{(x,\pa \M)}<t<T-\textrm{dist}{(x,\pa \M)}\}.$$
It is well-known that, by finite speed of propagation, no information can be obtained about time-dependent coefficients for wave equations from any type of measurement restricted to the lateral boundary $(0,T)\times\pa\M$ and $\mathcal D$ represents the maximal set where one can, in theory, recover  such coefficients (see e.g. \cite{KiOk}). Now, for $T>2\,\textrm{Diam}(M)$, we start by fixing a subset of $\mathcal D$ given by
$$ \mathcal E_T:=\{(t,x)\in (0,T)\times\M :\  W_g(x)<t<T-W_g(x)\},$$
where $W_g(x)$ denotes the length of the longest geodesic passing through the point $x$ in $\M$.

Our first result can be stated as follows.

\begin{thm}\label{t1} For $j=1,2$, let $a_j,b,c_j \in C^3([0,T]\times \M)$ and assume that
\bel{t1a}\supp{(a_1-a_2)} \cup \supp{(c_1-c_2)} \subset \mathcal E_T,\ee
\bel{t1b} \partial_\nu^ka_1(t,x)=\partial_\nu^ka_2(t,x),\quad  k=0,1,\ (t,x)\in(0,T)\times\pa\M,\ee
\bel{t1c}\exists \epsilon_0\in(0,Diam(M)/2),\quad |b(t,x)|>0,\quad (t,x)\in [T-\epsilon_0,T]\times\pa\M.\ee
Then, for any arbitrary small $\epsilon\in(0,\epsilon_0)$, the condition
\bel{t1d}\Lambda_{a_1,c_1,\epsilon}=\Lambda_{a_2,c_2,\epsilon},\ee
implies that $a_1=a_2$ and $c_1=c_2$.
\end{thm}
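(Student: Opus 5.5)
The plan has two steps. First, use the memory property of the Caputo derivative (Theorem \ref{memory}) to upgrade the partial DN map on $(T-\epsilon,T)$ to the full DN map on $(0,T)$. Second, recover $a$ and $c$ from the full data by geometric optics solutions (Section \ref{s4}).

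\textbf{Step 1: full lateral data.} Fix $f\in\mathcal J$ and let $u_j$ solve \eqref{eq1} with $(a_j,b,c_j)$. Set $w=u_1-u_2$. On $(0,T)\times\pa\M$ we have $w=0$. Using \eqref{t1a}, the coefficients $a_1-a_2$ and $c_1-c_2$ vanish near $(0,T)\times\pa\M$. Condition \eqref{t1b} ensures that the normal traces of the difference terms also vanish there.

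We evaluate the equation for $w$ on the boundary. The terms $\pa_t^2 w$, $a\pa_t w$ and $cw$ vanish because $w|_{\pa\M}=0$. This gives $\Delta_\g w=b\,\pa_t^\alpha w=0$ on $(0,T)\times \pa\M$, which is consistent but carries no information yet.

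The information comes from the normal derivative. Apply $\pa_\nu$ to the equation, which is legitimate since $u_j\in C([0,T];H^4)$, so the traces $\pa_\nu\Delta_\g u$ make sense. Set $h=\pa_\nu w|_{\pa\M}$. Then on $(T-\epsilon,T)\times\pa\M$ we get
$$\pa_t^2 h-\pa_\nu\Delta_\g w+a\pa_t h+(\pa_\nu a)\pa_t w+b\,\pa_t^\alpha h+(\pa_\nu b)\pa_t^\alpha w+ch+(\pa_\nu c)w=0.$$
The terms multiplying $w$ vanish because $w|_{\pa\M}=0$. The hypothesis \eqref{t1d} gives $h=0$ and $\pa_\nu\Delta_\g w=0$ on $(T-\epsilon,T)\times\pa\M$. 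Hence $b\,\pa_t^\alpha h=0$ there, and by \eqref{t1c} also $\pa_t^\alpha h=0$ on $(T-\epsilon,T)\times\pa\M$.

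Now $h$ vanishes on $(T-\epsilon,T)$, $h(0)=0$, and $\pa_t^\alpha h=0$ on $(T-\epsilon,T)$. The memory property of Theorem \ref{memory} then yields $h\equiv0$ on $(0,T)$. I expect this to be the main obstacle. One must check that the regularity of $h$ (from Theorem \ref{t4}, roughly $W^{1,1}$ in time with values in a boundary Sobolev space) is enough for Theorem \ref{memory}, which is presumably why the Appendix relaxes its hypotheses. One must also make sure the bookkeeping of the normal traces above is correct, possibly by using $\pa_\nu\Delta_\g$ data and not only $\pa_\nu u$. Consequently $\Lambda_{a_1,c_1,T}=\Lambda_{a_2,c_2,T}$ on $\mathcal J$.

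\textbf{Step 2: recovery from full data.} With the full DN map, I would follow the geometric optics approach of \cite{KiOk,Ki2}. Take $u_1$ a GO solution for $(a_1,b,c_1)$ of the form $e^{i\lambda(t-\psi(x))}(A+R_\lambda)$. Take $v$ a GO solution of the adjoint (backward) problem for $(a_2,b,c_2)$ with the opposite phase. The adjoint problem involves the right Riemann–Liouville derivative, constructed in Section \ref{s4}. Here $\psi$ is a distance function from a point outside $\M$ on a slightly larger simple manifold. The fractional term is lower order in $\lambda$ (it is $O(\lambda^\alpha)$ versus $\lambda$ for the first-order terms), so it enters only the remainder.

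Integrating by parts gives an identity of the form
$$\int_{(0,T)\times\M}\big((a_1-a_2)\pa_tu_1+(c_1-c_2)u_1\big)v=0.$$
The support condition \eqref{t1a} in $\mathcal E_T$ ensures that the boundary terms at $t=0,T$ vanish, and lets us choose GO solutions whose geodesics fully traverse $\M$ inside the time window. Dividing by $\lambda$ and letting $\lambda\to\infty$, with transport-equation amplitudes chosen to cancel the $a_j$ contributions, yields the vanishing of a light-ray transform of $a_1-a_2$. This is a weighted integral along null geodesics $t\mapsto(t_0+s,\gamma(s))$, with a weight coming from $\exp(\frac12\int a)$ factors in the amplitudes. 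By the standard argument, namely Fourier transform in $t$ plus injectivity of the geodesic ray transform on simple manifolds, this gives $a_1=a_2$. The weights are treated by first linearising through exponentials as in \cite{Ki2}. Finally, with $a_1=a_2$, the order-zero term gives the light-ray transform of $c_1-c_2$, and the same argument yields $c_1=c_2$.
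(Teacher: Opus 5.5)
Your proposal is correct and follows the paper's route. Step 1 is exactly Lemma~\ref{l4}: take the normal derivative of the equation on $(T-\epsilon,T)\times\pa\M$, divide by $b$, then apply Theorem~\ref{memory} together with a density argument to reach $\mathcal N_{a_1,c_1}=\mathcal N_{a_2,c_2}$. Step 2 is the paper's GO solutions / orthogonality identity / light-ray-transform argument, with the exponential weight removed by writing $a\,e^{-\frac14\int a}$ as a $z_0$-derivative.

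One correction: \eqref{t1a} does not make $a_1-a_2$ and $c_1-c_2$ vanish near all of $(0,T)\times\pa\M$, because $\mathcal E_T$ contains boundary points. What it does give is this: since $W_g(x)\geq \mathrm{Diam}(\M)/2$ and $\epsilon_0<\mathrm{Diam}(\M)/2$, the differences vanish on $[T-\epsilon_0,T]\times\M$, and that is all your Step 1 uses.
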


To the best of our knowledge, Theorem~\ref{t1} provides the first positive answer to problem~\textbf{(IP)} for hyperbolic equations. While such results are unavailable for the classical wave equation, even in the case of time-independent coefficients, Theorem~\ref{t1} shows that, under the generic assumption~\eqref{t1c}, the memory effects exhibited by the solutions of~\eqref{eq1}, induced by the nonlocal-in-time attenuation, make this significant restriction on the measurements possible.

Our analysis combines the construction of a suitable class of solutions, known as geometric optics (GO) solutions, for~\eqref{eq1} with key properties of the nonlocal operator in time. While the construction of GO solutions for the classical wave equation on a Riemannian manifold is by now well understood (see, e.g., \cite{FIKO,FeK,SY}), in Section~\ref{s4} we provide the first extension of this theory to wave equations with a nonlocal-in-time operator and exploit these solutions to recover time-dependent coefficients. The main difficulty in this extension arises from the presence of the nonlocal term, which perturbs the classical construction of GO solutions. This construction is combined with the properties of the nonlocal operator appearing in~\eqref{eq1}, established in Theorem~\ref{memory}, which highlight the memory effects of the solutions. More precisely, in Lemma~\ref{l4}, we show how these memory effects can be exploited to reduce boundary measurements supported on an arbitrarily small time interval of the form $(T-\epsilon,T)$ to measurements on the whole interval $(0,T)$. We also note that Theorem~\ref{memory} may be of independent interest, as it appears to substantially extend the existing literature by establishing such memory properties for time-fractional derivatives under minimal regularity assumptions.

Let us also observe that the measurements considered in Theorem~\ref{t1} are described by the map $\Lambda_{a,c,\epsilon}$, which can be viewed as a restriction of the classical hyperbolic Dirichlet-to-Neumann map arising in most inverse coefficient problems for hyperbolic equations (see, e.g., \cite{B,BK,FIKO,FeK,KKL}). The additional boundary measurement $\partial_\nu\Delta_g$ may be interpreted as enhanced information on the flux, which is natural since it corresponds to the boundary trace of a classical differential operator. Moreover, such a quantity arises naturally when boundary measurements are replaced by internal measurements, as is common in many works devoted to inverse problems for nonlocal partial differential equations (see, e.g., \cite{FGKU,FKU,GRSU,GSU}).

An important feature of Theorem~\ref{t1} comes from its application to recovery of coefficients from data on disjoint sets. More precisely, fix $\epsilon_1\in (0,T-2Diam(\M)) $ and consider the sets
\bel{Jep}\mathcal J_{\epsilon_1}:=\{f\in\mathcal J:\ \textrm{supp}(f)\subset [0,T-\epsilon_1]\times\pa \M\},\ee
$$ \mathcal E_{T-\epsilon_0}:=\{(t,x)\in (0,T-\epsilon_0)\times\M :\  W_g(x)<t<T-\epsilon_0-W_g(x)\}.$$
Our second main result is an application Theorem~\ref{t1}
that can be stated as follows.

\begin{thm}\label{t2} For $j=1,2$, let $a_j,b,c_j \in C^3([0,T]\times \M)$ and assume that \eqref{t1b}-\eqref{t1c} and the following condition
\bel{t2a}\supp{(a_1-a_2)} \cup \supp{(c_1-c_2)} \subset \mathcal E_{T-\epsilon_0}\ee
are fulfilled.
Then, for any arbitrary small $\epsilon\in(0,\min(\epsilon_0,\epsilon_1)/2)$, the condition
\bel{t2b}\Lambda_{a_1,c_1,\epsilon}f=\Lambda_{a_2,c_2,\epsilon}f,\quad f\in J_{\epsilon_1}\ee
implies that $a_1=a_2$ and $c_1=c_2$.
\end{thm}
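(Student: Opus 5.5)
The plan is to reduce Theorem~\ref{t2} to the mechanism behind Theorem~\ref{t1}, working on the shorter interval $(0,T-\epsilon_0)$, or more precisely $(0,T')$ with $T':=T-\epsilon_1$ or $T-\epsilon_0$. We need a time window where the boundary data vanish and measurements are still available. Fix $f\in\mathcal J_{\epsilon_1}$ and let $u_j$ solve \eqref{eq1} with $(a_j,b,c_j)$. Since $\epsilon<\min(\epsilon_0,\epsilon_1)/2$, the observation window $(T-\epsilon,T)$ lies inside $(T-\epsilon_1,T)$, where $f\equiv0$. Set $u=u_1-u_2$. It solves
\[
\partial_t^2u-\Delta_\g u+a_1\partial_tu+b\,\partial_t^\alpha u+c_1u=(a_2-a_1)\partial_tu_2+(c_2-c_1)u_2,
\]
with zero initial data, zero Dirichlet data, and, by \eqref{t2b}, vanishing $\partial_\nu u$ and $\partial_\nu\Delta_\g u$ on $(T-\epsilon,T)\times\pa\M$.

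Because of \eqref{t2a}, the source is supported in $[0,T-\epsilon_0]\times\M$. On $(T-\epsilon_0,T)$ the function $u$ therefore solves the homogeneous equation with zero Dirichlet data. The first step is to transfer the Neumann information backwards in time. On $(T-\epsilon,T)\times\pa\M$ we have $u=\partial_\nu u=0$ and $\partial_t^2u=\Delta_\g u$ on the boundary, since $u|_{\pa\M}=0$. Combined with $\partial_\nu\Delta_\g u=0$ and the equation, this gives, on the boundary, $b\,\partial_\nu\partial_t^\alpha u=-\partial_\nu(\partial_t^2u-\Delta_\g u+a_1\partial_tu+c_1u)$. Here I use \eqref{t1b}, or rather normal derivatives of $a_1$ and $c_1$ coupled with $u=\partial_\nu u=0$. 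This should yield $\partial_\nu\partial_t^\alpha u=0$ on $(T-\epsilon,T)\times\pa\M$. Since $b\neq0$ there by \eqref{t1c}, this is exactly the point where the argument of Lemma~\ref{l4} applies.

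Next I invoke the memory property Theorem~\ref{memory}, as in Lemma~\ref{l4}. The function $h:=\partial_\nu u|_{\pa\M}$ vanishes on $(T-\epsilon,T)$ together with its Caputo derivative there. The nonlocal tail $\int_0^{T-\epsilon}(t-s)^{-\alpha}\partial_sh(s)\,ds$ is then zero for $t\in(T-\epsilon,T)$, and by analyticity in $t$ of this tail this forces $h\equiv0$ on $(0,T)$. The same reasoning applied to $\partial_\nu\Delta_\g u$ gives full Neumann data of $u$ on $(0,T)\times\pa\M$, for every $f\in\mathcal J_{\epsilon_1}$.

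Finally, I run the geometric-optics argument of Theorem~\ref{t1}, with $T$ replaced by $T-\epsilon_1$ (or $T-\epsilon_0$), using the GO solutions from Section~\ref{s4}. Their boundary traces must be supported in $[0,T-\epsilon_1]$. I would choose $T'=T-\epsilon_0\le T-\epsilon_1$ where needed, and cut off the GO solutions in time outside the relevant domain; this keeps the error terms small because $\supp(a_1-a_2)\cup\supp(c_1-c_2)\subset\mathcal E_{T-\epsilon_0}$ stays away from the cutoff region. I then test the identity $\int\!\!\int\big((a_2-a_1)\partial_tu_2+(c_2-c_1)u_2\big)v=0$ against adjoint GO solutions $v$, obtained from the vanishing full data via Green's formula. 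Recovering first the light-ray transform of $a_1-a_2$, then of $c_1-c_2$, on the simple manifold gives uniqueness.

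I expect the main obstacle to be the compatibility of the supports. The GO solutions used must have Dirichlet data in $\mathcal J_{\epsilon_1}$, while the integrals must still capture all of $\mathcal E_{T-\epsilon_0}$. If $\epsilon_1>\epsilon_0$ this needs the geodesic-length margin from $W_g$. I would handle it by choosing the temporal shift of the GO phases so that the rays through $\mathcal E_{T-\epsilon_0}$ exit before $T-\epsilon_1$, adjusting the definition of $T'$ if necessary.
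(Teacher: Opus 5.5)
Your first steps follow the paper's route exactly (Lemma~\ref{l5}, i.e.\ the argument of Lemma~\ref{l4}). By \eqref{t2a} the source $(a_2-a_1)\partial_tu_2+(c_2-c_1)u_2$ vanishes for $t>T-\epsilon_0$, so the normal derivative of the equation gives $b\,\partial_t^\alpha\partial_\nu u=0$ on $(T-\epsilon,T)\times\pa\M$. Theorem~\ref{memory} with \eqref{RC} then gives $\partial_\nu u=0$ on $(0,T)\times\pa\M$. Green's formula yields the orthogonality identity against arbitrary adjoint solutions, and GO solutions plus injectivity of the light-ray transform conclude. Two minor points. Your claim that the same reasoning gives $\partial_\nu\Delta_\g u=0$ on all of $(0,T)$ is unjustified, since nothing controls $\partial_t^\alpha\partial_\nu\Delta_\g u$; it is also not needed. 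And since the GO boundary traces are not in $\mathcal J_{\epsilon_1}$, you must first extend $\mathcal N_{a_1,c_1}f=\mathcal N_{a_2,c_2}f$ to $f\in H^1_0((0,T-\epsilon_1)\times\pa\M)$ by density and the continuity in Proposition~\ref{p1}.

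The genuine gap is the final support-compatibility step, which you flag but do not resolve. A null geodesic through $(t_0,x_0)\in\mathcal E_{T-\epsilon_0}$ meets $[0,T]\times\M$ only for $t\in[t_0-W_g(x_0),t_0+W_g(x_0)]\subset(0,T-\epsilon_0)$.

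\emph{Case $\epsilon_1\le\epsilon_0$.} The forward GO traces lie in $(0,T-\epsilon_1)\times\pa\M$, the adjoint principal parts vanish near $t=T$, no cutoff is needed, and the proof of Theorem~\ref{t1} applies verbatim.

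\emph{Case $\epsilon_1>\epsilon_0$.} Your remedy fails here.
\begin{itemize}
\item The light-ray transform must be obtained along each prescribed null geodesic meeting the support, so there is no freedom to shift it in time.
\item A time cutoff $\eta(t)V_\lambda$ does not keep the errors small: it creates the source term $2\eta'\partial_tV_\lambda$, of size $O(\lambda)$, which destroys \eqref{decay}.
\item What does survive is causality. The Caputo term is of Volterra type, so the cut-off GO solution coincides with the uncut one on $\{t<t^*\}$ if $\eta=1$ on $[0,t^*]$. But this only helps if the coefficient supports lie in $\{t<T-\epsilon_1\}$.
\item Hypothesis \eqref{t2a} allows supports up to time $T-\epsilon_0-W_g(x)$. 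If $\epsilon_1-\epsilon_0>2W_g(x_0)$, which the hypotheses permit for large $T$, there are points $(t_0,x_0)\in\mathcal E_{T-\epsilon_0}$ with $t_0>T-\epsilon_1+W_g(x_0)$. Every null geodesic through such a point enters $\M$ after the last excitation time $T-\epsilon_1$, so no GO of this kind reaches it.
\end{itemize}

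The paper does not handle this either. It asserts that the forward traces lie in $[0,T-\epsilon_1]\times\pa\M$ ``since $\epsilon<\min(\epsilon_0,\epsilon_1)/2$'', but this does not follow from the bound on $\epsilon$; it holds only when $\epsilon_1\le\epsilon_0$. So your argument, like the paper's, proves the statement when $\epsilon_1\le\epsilon_0$. Using the causal cutoff, it also covers the case where the supports additionally lie in $\{t<T-\epsilon_1\}$. The remaining case needs either an extra hypothesis, such as supports in $\mathcal E_{T-\max(\epsilon_0,\epsilon_1)}$, or a genuinely new ingredient.
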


We obtain similar properties for  time-independent coefficients.

\begin{thm}\label{t3} For $j=1,2$, let $a_j,c_j \in C^3( \M)$, $b \in C^3([0,T]\times \M)$ and assume that \eqref{t1b}-\eqref{t1c} 
 and the following condition
\bel{t3a}\partial_\nu^kc_1(x)=\partial_\nu^kc_2(x),\quad   k=0,1,\ x\in\pa\M,\ee
are fulfilled.
Then, for any arbitrary small $\epsilon\in(0,\min(\epsilon_0,\epsilon_1)/2)$, the condition \eqref{t2b}
implies that $a_1=a_2$ and $c_1=c_2$.
\end{thm}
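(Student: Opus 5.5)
The plan is to reduce Theorem~\ref{t3} to Theorem~\ref{t2}, using time-independence to shift measurements in time. Since $a_j,c_j$ do not depend on $t$, the difference of the coefficients is invariant under time translation. The only obstruction to applying Theorem~\ref{t2} directly is the support condition \eqref{t2a}: time-independent differences $a_1-a_2$, $c_1-c_2$ are supported on all of $[0,T]\times\M$, not inside $\mathcal E_{T-\epsilon_0}$.

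The first step handles the spatial part of the support condition near $\pa\M$. Using \eqref{t1b} and \eqref{t3a} (agreement of $a_j,c_j$ and their normal derivatives on $\pa\M$), I will extend $\M$ to a slightly larger simple manifold $\M_1\supset\M$. I then extend $a_j,c_j$ to $\M_1$ so that $a_1-a_2$ and $c_1-c_2$ vanish on $\M_1\setminus\M$, keeping enough regularity ($C^1$ across $\pa\M$ is what \eqref{t1b}, \eqref{t3a} give). By the equality of the partial DN maps together with boundary determination and unique continuation outside $\M$, the data on $\pa\M_1$ also coincide. Alternatively, I can work in $\M$ directly and argue that the GO-based identity from the proof of Theorem~\ref{t1} (Lemma~\ref{l4} and the integral identity) still holds, because the boundary terms vanish under \eqref{t1b}, \eqref{t3a}.

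The second step handles the time localization. Take a cutoff $\chi(t)$ supported in $(W,T-\epsilon_0-W)$, where $W=\max W_g<\textrm{Diam}(\M_1)$; such an interval is nonempty since $T-\epsilon_1>2\textrm{Diam}$ and $\epsilon_0$ is small. I then follow the proof of Theorem~\ref{t2}. Lemma~\ref{l4} and the memory property of Theorem~\ref{memory}, using $b\neq0$ near $T$ on $\pa\M$, convert the equality \eqref{t2b} on $(T-\epsilon,T)$ for $f\in\mathcal J_{\epsilon_1}$ into equality of the full DN maps on $(0,T-\epsilon_1)$. From there I derive the standard integral identity
$$\int_0^{T}\!\!\int_\M \big((a_1-a_2)\pa_t u_1+(c_1-c_2)u_1\big)v\,dV_g\,dt=0,$$
with $u_1$ a GO solution of \eqref{eq1} and $v$ a GO solution of the adjoint problem built in Section~\ref{s4}. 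I will choose the GO solutions with amplitudes concentrated in time on the support of $\chi$ and along a geodesic, which is possible since the geodesic lengths are $\le W$. This yields the geodesic ray transform of $(a_1-a_2)$ weighted by the phase, from which $a_1=a_2$ follows by injectivity of the ray transform on the simple manifold. Then $c_1=c_2$ follows from the lower-order terms in the expansion in the large parameter.

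The main obstacle is making sure the boundary terms and the support near $t=0$ and $t=T$ cause no trouble when the coefficient differences do not vanish there. Because the coefficients are time-independent, I expect to be able to choose GO solutions whose supports in time lie inside $\mathcal E_{T-\epsilon_0}$, via compactly supported time amplitudes, so that the integrand only sees the coefficient differences on the region where Theorem~\ref{t2}'s argument applies. I will try this first. If it fails, I will split each difference as $\chi(t)(a_1-a_2)+(1-\chi)(a_1-a_2)$ and show that the second part contributes nothing to the identity, since the GO solutions vanish there.
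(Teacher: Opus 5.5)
Your overall route is the paper's. Hypotheses \eqref{t1b} and \eqref{t3a} supply \eqref{l4a}, so Lemma~\ref{l5} (via Theorem~\ref{memory}) turns \eqref{t2b} into $\mathcal N_{a_1,c_1}f=\mathcal N_{a_2,c_2}f$ for $f$ supported in $[0,T-\epsilon_1]$. Geometric optics solutions in the orthogonality identity then give vanishing light ray transforms, which for time-independent coefficients are geodesic ray transforms.

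\textbf{The gap: your time localization.} The step that fails is the time localization you build to imitate \eqref{t2a}. The amplitude $\beta$ is transported along the whole null lift $t=s+r$ of a geodesic $\gamma$: it is determined in $z_0$ by \eqref{eqeq}, so it cannot be cut off in time inside $\M$. Hence the time window must contain the entire lift over $\M$, of length $L=\ell(\gamma\cap\M)\le W$. Your window $(W,T-\epsilon_0-W)$ need not do this. On a simple manifold $W=\textrm{Diam}(\M)$, and the hypotheses ($T-\epsilon_1>2\,\textrm{Diam}(\M)$, $\epsilon_0<\textrm{Diam}(\M)/2$) do not even give $T-\epsilon_0>2W$, let alone $T-\epsilon_0-2W\ge W$ (take $T-2\,\textrm{Diam}(\M)$ small and $\epsilon_0$ close to $\textrm{Diam}(\M)/2$). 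Likewise, the lift of a geodesic of length $L$ can lie in $\mathcal E_{T-\epsilon_0}$ only if $T-\epsilon_0>3L$. So you would reach only short geodesics, which does not suffice for injectivity of the ray transform. The fallback splitting does not help either. A lift through $\supp\chi$ generally leaves $\{\chi=1\}$, so already the principal parts see $(1-\chi)(a_1-a_2)$ at order $\la$; this is besides $R_\la$, $R_\la^*$, which are never localized.

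\textbf{The repair, which is what the paper does.} Drop any localization of $a_1-a_2$ and $c_1-c_2$. The order-$\la$ term of the identity is a multiple of $\la\int(a_1-a_2)\beta^2\chi_{a_2}\chi_{-a_1}\,dV_\g\,dt$ and is localized by $\beta^2$ automatically. All other terms, including those containing the remainders, are $O(\la^\alpha)$ by \eqref{decay}. The only requirement is that the lift over $\M$ lie in $(0,T-\epsilon_1)$, which gives zero initial data, Dirichlet trace supported in $[0,T-\epsilon_1]$, and zero final data for the adjoint solution. A shift $s$ achieving this exists for every $\gamma$, since $L\le\textrm{Diam}(\M)<T-\epsilon_1$.

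Two smaller points.
\begin{itemize}
\item The weight you obtain is not a phase, since the phases cancel. It is $\chi_{a_2}\chi_{-a_1}=e^{\kappa\int_{l_-}^{z_0}q}$ with $q=\tilde a_1-\tilde a_2$ and a constant $\kappa\neq0$. As in the proof of Theorem~\ref{t1}, you need $\int_{l_-}^{l_+}q\,e^{\kappa\int_{l_-}^{z_0}q}\,dz_0=\kappa^{-1}\big(e^{\kappa\int_{l_-}^{l_+}q}-1\big)$ together with real-valuedness to conclude that the ray transform of $q$ vanishes.
\item Your first version of Step~1, transferring data to $\pa\M_1$ by unique continuation, is unjustified. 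It needs a unique continuation property for this nonlocal equation that you do not have. Gluing would also require the Dirichlet-to-Neumann map for all Dirichlet data, not only those supported in $[0,T-\epsilon_1]$. Your alternative is the right one: $\M_1$ serves only for the geometric construction and for extending $a_j$ with $\tilde a_1=\tilde a_2$ off $\M$, where \eqref{t1b} provides the $C^1$ matching.
\end{itemize}
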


The results of Theorems~\ref{t2} and~\ref{t3} fall into the category of inverse problems with data on disjoint sets in time, since the Dirichlet excitations are applied on $[0,T-\epsilon_1]\times\partial\mathcal{M}$, whereas the measurements are restricted to $[T-\epsilon,T]\times\partial\mathcal{M}$, with $\epsilon\leq \frac{\epsilon_1}{2}$. In contrast to the existing literature on this important and challenging topic (see, e.g., \cite{KKLO,LO}), these results introduce what appears to be the first notion of data on disjoint sets in time for hyperbolic equations. As in Theorem~\ref{t1}, such a result is unavailable for the classical wave equation and relies crucially on the memory effects of the solutions to~\eqref{eq1}, which are induced by the nonlocal-in-time attenuation and characterized in Theorem~\ref{memory}.

We emphasize that, beyond their intrinsic mathematical interest, inverse problems with data on disjoint sets have important practical relevance, as they avoid any  overlap between excitations and measurements, a requirement that naturally arises in many applications. In this regard, Theorems~\ref{t2} and~\ref{t3} provide a framework for recovering the coefficients $(a,c)$ from excitations applied during the time interval $[0,T-\epsilon_1]$ and measurements collected afterwards on $[T-\epsilon,T]$, where $\epsilon\in(0,\epsilon_1/2)$ is arbitrary.

\begin{rem}
Let us also observe that, by considering a suitable class of measurements, the strategy developed in the present article can be extended to recover more general classes of first-order coefficients, similarly to the results obtained for the classical wave equation in \cite{FIKO,FeK,SY}.
\end{rem}

\section{Preliminary properties}\label{s3}

The main goal of this section is to establish the well-posedness of problem~\eqref{eq1}, together with the existence of sufficiently smooth solutions. In particular, we prove the existence of solutions with enough regularity to define the partial Dirichlet-to-Neumann map~\eqref{pDN}. We also derive suitable estimates for solutions of nonhomogeneous equations associated with~\eqref{eq1} and for the corresponding formal adjoint problem.

\subsection{Forward problem}
We state the uniqueness and existence of solutions to the initial-boundary value problem \eqref{eq1}. 
For this purpose, we consider first the following intermediate IBVP

\begin{equation}
\label{eq2}
\left\{
\begin{aligned}
& \partial_t^2 u-\Delta_\g u +a(t,x)\partial_tu+ b(t,x) \partial_t^{\alpha} u +c(t,x)u  = F(t,x), &&\quad (t,x)\in (0,T)\times \M,\\
& u(t,x) = 0,                                                                                                      &&\quad (t,x)\in (0,T)\times\pa\M,\\
& u(0,x) = 0,\quad \partial_t u(0,x) = 0,                                             &&\quad x\in\M.
\end{aligned}
\right.
\end{equation}
We show the following

\begin{lem}
\label{l1}
Let $a,b,c\in L^\infty(0,T;L^\infty(\M))$ and
$F\in L^1(0,T; L^2(\M))$. Then there exists a unique solution $u\in C([0,T]; H_0^1(\M))\cap C^1([0,T]; L^2(\M))$ to \eqref{eq2} which satisfies the following estimate 
\bel{esta}\norm{u}_{C([0,T]; H^1(\M))}+\norm{u}_{C^1([0,T]; L^2(\M))}\leq C\norm{F}_{L^1(0,T;L^2(\M))},\ee
with $C>0$ depending only on $T$, $\M$, $\alpha$, $a$, $b$, $c$.
Moreover, let $N\in \mathbb N := \{1, 2, \ldots\}$ and assume that
$a,b,c\in C^{N}([0,T]\times \M)$, \bel{l1a}F\in \bigcap_{k=0}^NW^{N-k,1}(0,T; H^k(\M))\  \textrm{with } F(0,x)=\ldots=\partial_t^{N-1}F(0,x)=0,\ x\in\M.\ee Then 
$$u\in \bigcap_{k=0}^{N+1}C^{N+1-k}([0,T];H^k(\M)).$$

\end{lem}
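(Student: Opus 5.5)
We treat the nonlocal term $b\,\partial_t^\alpha u$ as a perturbation of the classical damped wave equation and close the argument by a fixed point in time. For $v\in C^1([0,T];L^2(\M))$ with $v(0)=0$, let $\mathcal S(G)$ denote the unique solution of
$$\partial_t^2 w-\Delta_\g w+a\partial_t w+cw=G,\quad w|_{(0,T)\times\pa\M}=0,\quad w(0)=\partial_tw(0)=0.$$
For $G\in L^1(0,T;L^2(\M))$, the standard energy estimate gives
$$\|w\|_{C([0,t];H^1)}+\|w\|_{C^1([0,t];L^2)}\le C\|G\|_{L^1(0,t;L^2)}$$
for every $t\in(0,T]$. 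This follows by multiplying by $\partial_t w$, integrating over $(0,t)\times\M$, absorbing $a$ and $c$ through Gronwall's lemma, and first working with smooth data and then passing to the limit by density. We then seek $u$ as a fixed point of $\Phi(u)=\mathcal S(F-b\,\partial_t^\alpha u)$ on $X_t=\{u\in C([0,t];H_0^1)\cap C^1([0,t];L^2):u(0)=0\}$.

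The key bound concerns the fractional term. Since $\partial_t^\alpha u=I^{1-\alpha}\partial_t u$, Young's inequality for the kernel $s^{-\alpha}/\Gamma(1-\alpha)$ gives
$$\|b\,\partial_t^\alpha u\|_{L^1(0,t;L^2)}\le \|b\|_\infty\frac{t^{1-\alpha}}{\Gamma(2-\alpha)}\|\partial_t u\|_{L^1(0,t;L^2)}\le C t^{2-\alpha}\|u\|_{X_t}.$$
Hence $\Phi$ is a contraction on $X_{t_0}$ for some small $t_0$ that depends only on $\|a\|_\infty,\|b\|_\infty,\|c\|_\infty,\alpha,T$, and not on $F$. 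To reach all of $[0,T]$, I would avoid restarting the problem (the memory makes restarts awkward) and instead work with a weighted norm $\sup_t e^{-\lambda t}(\cdot)$. Alternatively, one can apply the energy estimate on $(0,t)$ to obtain
$$y(t)\le C\|F\|_{L^1(0,t)}+C\int_0^t (t-s)^{1-\alpha}y(s)\,ds,$$
where $y(t)=\sup_{[0,t]}(\|w\|_{H^1}+\|\partial_t w\|_{L^2})$, and then invoke a Gronwall lemma with a weakly singular kernel. Either route yields global existence, uniqueness, and \eqref{esta}.

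For the regularity statement, we differentiate in time. Formally, $u_k=\partial_t^k u$ solves \eqref{eq2} with source
$$F_k=\partial_t^kF-\sum_{j<k}\binom kj\bigl(\partial_t^{k-j}a\,\partial_t u_j+\partial_t^{k-j}b\,\partial_t^k\partial_t^\alpha u+\partial_t^{k-j}c\,u_j\bigr).$$
The commutation $\partial_t\partial_t^\alpha u=\partial_t^\alpha\partial_t u$ holds exactly when $\partial_t u(0)=0$, so the compatibility conditions $\partial_t^jF(0)=0$ for $j\le N-1$ are used to show inductively that $\partial_t^j u(0)=0$ for $j\le N$. These come directly from the equation: $\partial_t^{j+2}u(0)$ is expressed through $\partial_t^jF(0)$ and lower derivatives, which vanish. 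Rigorously, one applies the first part to difference quotients $(u(\cdot+h)-u)/h$, after extending $u$ by zero to $t<0$, which is legitimate since the initial data vanish, or equivalently runs the fixed point in the space of $W^{N,\infty}$-in-time functions. This gives $u\in\bigcap_{k\le N}C^{N-k}([0,T];H^1)\cap C^{N+1-k}([0,T];L^2)$ up to the first spatial level. Spatial regularity then follows by elliptic regularity: $-\Delta_\g\partial_t^m u=\partial_t^mF-\partial_t^{m+2}u-\partial_t^m(a\partial_tu+b\partial_t^\alpha u+cu)$ with zero Dirichlet data. Iterating on $k$ yields $\partial_t^{N+1-k}u\in C([0,T];H^k)$, using that $\partial_t^\alpha$ maps $C^1([0,T];H^s)$ into $C([0,T];H^s)$ and that multiplication by $C^N$ coefficients preserves $H^k$ for $k\le N$.

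The main obstacle I expect is bookkeeping the time derivatives of the nonlocal term. One has to justify $\partial_t^k\partial_t^\alpha u=\partial_t^\alpha\partial_t^ku$ under only the vanishing conditions inherited from \eqref{l1a}, and check that the $L^1$-in-time source assumptions propagate through the induction. For this I would first try the zero extension to negative times, under which $\partial_t^\alpha$ becomes a convolution commuting with translations.
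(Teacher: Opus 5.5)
Your proposal is correct, but both steps take a somewhat different route from the paper.

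\textbf{Existence.} The paper does not build a damped-wave solution operator. It writes $U=(u,\partial_t u)$ via Duhamel with the free Dirichlet wave group $U_0(t)$ on $H^1_0(\M)\times L^2(\M)$. All lower-order terms, $a\partial_tu+cu$ and $b\,\partial_t^\alpha u$, go into a single Volterra operator bounded by $C\int_0^t\norm{V(\tau)}_{\mathcal H}d\tau$. That bound comes from the same Fubini computation $\int_\tau^t(s-\tau)^{-\alpha}ds=(t-\tau)^{1-\alpha}/(1-\alpha)$ that you use. An iterate is then a contraction, and \eqref{esta} follows from ordinary Gronwall.

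Your kernel $(t-s)^{1-\alpha}$ is bounded, not weakly singular, so classical Gronwall suffices. That inequality alone gives uniqueness and \eqref{esta}; existence still needs your weighted norm or the paper's iterate argument. Your split, absorbing $a,c$ into $\mathcal S$ and using the weight $e^{-\lambda t}$ (contraction constant $O(\lambda^{\alpha-2})$), is valid. It does presuppose well-posedness of the damped wave equation with $L^\infty$ time-dependent coefficients, which is usually proved by exactly the paper's perturbation argument, so the paper's version is more self-contained.

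\textbf{Regularity.} The paper avoids difference quotients. It solves the formally differentiated problem for $w$ with source $\partial_tF-\partial_ta\,\partial_tu-\partial_tb\,\partial_t^\alpha u-\partial_tc\,u\in L^1(0,T;L^2(\M))$ and sets $v=\int_0^tw$. Using $w(0)=0$ (so $\int_0^t\partial_t^\alpha w=\partial_t^\alpha v$) and $F(0)=0$, it concludes $u=v$ by uniqueness. Continuity of $\partial_tu$ in $H^1$ and of $\partial_t^2u$ in $L^2$ then comes for free.

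With your difference quotients, uniform bounds only give $L^\infty$-in-time regularity via weak-$*$ limits. To reach the $C$/$C^1$ statements you must also show $D_hu\to w$, by applying \eqref{esta} to $D_hu-w$, whose source tends to $0$ in $L^1(0,T;L^2(\M))$. This works, and your zero extension to $t<0$ is exactly what makes $\partial_t^\alpha$ commute with translations. It is, however, precisely the step the integration trick bypasses.

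On the other hand, you are more explicit than the paper for general $N$. The paper only treats $N=1$. You check that $\partial_t^{N+1-k}F\in W^{1,1}(0,T;H^{k-2}(\M))\subset C([0,T];H^{k-2}(\M))$ is exactly what the elliptic bootstrap on $-\Delta_\g\partial_t^mu$ requires.

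\textbf{Two small slips.} The fractional term in $F_k$ should be $\partial_t^{k-j}b\,\partial_t^\alpha u_j$. The vanishing you need, and in fact obtain from the compatibility conditions, is $\partial_t^ju(0)=0$ for $j\le N+1$, since $u_N$ must satisfy $\partial_tu_N(0)=0$.
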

\begin{proof}
We consider a proof based on a fixed-point argument and a classical unique 
existence result for hyperbolic equations. We start by showing existence within the class
$C([0,T];H^1_0(\M)) \cap  C^1([0,T];L^2(\M))$ of the solution to 
\eqref{eq2}.

\textbf{Step 1:} In this step, we prove unique existence of solutions  lying in $ C([0,T];H^1_0(\M)) \cap  C^1([0,T];L^2(\M))$.
In view of \cite[Chap. 3, Theorem 8.1]{LM1}, for every  $(v_0,v_1) \in \mathcal H:=H^1_0(\M)\times L^2(\M)$, 
we can consider the $ C([0,T];H^1_0(\M)) \cap  C^1([0,T];L^2(\M))$-solution 
$v$ to the problem
\begin{equation}
\label{eq3}
\left\{
\begin{aligned}
& \partial_t^2 v-\Delta_\g v   = 0, &&\quad \textrm{in } (0,T)\times \M,\\
& v(t,x) = 0,                                                                                                      &&\quad (t,x)\in (0,T)\times\pa\M,\\
& v(0,x) = v_0(x),\quad \partial_t v(0,x) = v_1(x),                                             &&\quad x\in\M.
\end{aligned}
\right.
\end{equation}
Then, we introduce the operator 
\begin{equation}
\label{def-U0}
U_0(t) : (v_0,v_1) \to (v(t),\partial_t v(t))
\end{equation} 
and recall that $t\mapsto U_0(t)\in C([0,T];\mathcal B (\mathcal H))$. 
Here and henceforth, we denote by $\mathcal B(X,Y)$ the set of linear bounded operators from 
the Banach space $X$ to the Banach space $Y$, 
and we write $\mathcal B(X)$ instead of $\mathcal B(X,X)$. 

In light of \cite[Theorem 2.30]{KKL} (see also \cite[Theorem 2.1]{LLT}), for $a=b=c\equiv0$ and $F \in L^1(0,T;L^2(\M))$,  it is well known that \eqref{eq2} admits 
a unique solution $u$ such that 
$U_1:=(u,\partial_t u) \in  C([0,T];\mathcal H)$ reads
\begin{equation}
\label{t1d}
U_1(t)=\int_0^tU_0(t-s)(0,F(s))ds,\ t \in [0,T],
\end{equation}
and satisfies the estimate
\begin{equation}
\label{es}
\norm{U_1}_{ C([0,T];\mathcal H)}
\leq \norm{U_0}_{ C([0,T];\mathcal B(\mathcal H))}\norm{F}_{L^1(0,T;L^2(\M))}.
\end{equation} Let us also consider
\begin{equation}
\label{def-Q}A(t):=\left(\begin{array}{ll}0&0\\ c(t,\cdot)& a(t,\cdot)  \end{array}\right)\quad 
B(t,\tau):=\left(\begin{array}{ll}0&0\\ 0& b(t,\cdot)\frac{\tau^{-\alpha}}{\Gamma(1-\alpha)}  \end{array}\right),\quad t,\tau\in (0,T]
\end{equation}
and infer from Duhamel's principle that 
$u \in  C([0,T];H^1_0(\Omega)) \cap  C^1([0,T];L^2(\Omega))$ solves
\eqref{eq2} if and only if $U:=(u,\partial_t u)$ is a $ C([0,T];\mathcal H)$-solution, for $t\in[0,T]$, to 
the integral equation 
\begin{equation}
\label{FP-eq2}
U(t)=(\mathcal GU)(t):=U_1(t)-\int_0^tU_0(t-s)A(s)U(s)ds-\int_0^t\int_0^sU_0(t-s)B(s,s-\tau)U(\tau) d\tau ds.
\end{equation}
It would be enough to prove that some iterate $\mathcal G^m$ of $\mathcal G$ is a contraction mapping 
on $ C([0,T];\mathcal H)$. 
To do so, we observe that, for all $t,\tau \in (0,T]$ that $A(t),B(t,\tau) \in \mathcal B(\mathcal H)$ and that, there exists $C>0$ depending on $a,b,c$, $(\M,\g)$ and $T$, such that 
\begin{equation}
\label{FP-eq3}
\norm{A(t)}_{\mathcal B(\mathcal H)}\leq C,\quad \norm{B(t,\tau)}_{\mathcal B(\mathcal H)} \leq C \tau^{-\alpha},\quad t,\tau\in(0,T).
\end{equation}
Fix
$\mathcal K V:=\mathcal G V - U_1$ for all $V\in  C([0,T];\mathcal H)$, and observe that, for all $t\in[0,T]$, we have
\begin{eqnarray}
\norm{\mathcal K V(t)}_{\mathcal H}
&\leq & C\int_0^t\left(\norm{V(s)}_\mathcal H + \int_0^s \frac{(s-\tau)^{-\alpha}}{\Gamma(1-\alpha_1)} \norm{V(\tau)}_\mathcal H 
 d\tau\right)ds\\
&\leq & C\left( \int_0^t\norm{V(s)}_\mathcal H ds+ \int_0^t \int_\tau^t \frac{(s-\tau)^{-\alpha}}{\Gamma(1-\alpha)} \norm{V(\tau)}_\mathcal H 
ds d\tau\right) \nonumber\\
%&\leq & C \int_0^t \left(\int_\tau^t \frac{(s-\tau)^{-\alpha_1}}{\Gamma(1-\alpha_1)} ds\right) \norm{V(\tau)}_\mathcal H  %d\tau\\
& \leq & C \int_0^t\left(1+\frac{(t-\tau)^{1-\alpha}}{\Gamma(2-\alpha)}\right)\norm{V(\tau)}_\mathcal H d\tau\\
&\leq& C\left(1+\frac{T^{1-\alpha}}{\Gamma(2-\alpha)}\right)\int_0^t\norm{V(\tau)}_\mathcal H d\tau.
\label{eq-tg}
\end{eqnarray}
From this and \eqref{es}-\eqref{FP-eq2} we see that $\mathcal G$ maps 
$ C([0,T]; \mathcal H)$ into itself and, following classical arguments, one can easily check that there exists $m_0\in\mathbb N$ such that $\mathcal G^{m_0}$ is contractive on $ C([0,T];\mathcal H)$. Thus, applying the Banach fixed-point theorem, we deduce that $\mathcal G$ admits a unique fixed point $U\in  C([0,T];\mathcal H)$.
  This proves the unique existence of  a $ C([0,T];H^1_0(\M)) \cap  C^1([0,T];L^2(\M))$-solution to 
\eqref{eq2}. In addition, fixing
$$D(t)=\sup_{s\in[0,t]}\norm{U(s)}_\mathcal H,\quad t\in[0,T]$$ and applying \eqref{es}, \eqref{FP-eq2}, \eqref{FP-eq3}, we obtain
$$D(t)\leq C_0\norm{F}_{L^1(0,T;L^2(\M))}+C_1\int_0^tD(s)ds,$$
where $C_0,C_1>0$ depend only on $T$, $(\M,\g)$, $\alpha$, $a$, $b$, $c$. Then, estimate \eqref{esta} follows from a direct application of the Gronwall inequality. This completes the proof of the first statement of the lemma.

%%%%%%%%%%%%%%%% not necessary
\iffalse
This proves that \eqref{FP-eq2} has a unique solution $U\in  C([0,T];\mathcal H)$ 
fulfilling \eqref{es}. 
From this result, assuming that $U=(u,v)$ we deduce that $u\in C([0,T];H_0^1(\Omega))$,  
$\partial_t u = v\in  C([0,T];L^2(\Omega))$, and thus \eqref{sy:intro} admits a unique solution 
$u\in  C([0,T];H^1_0(\Omega))\cap  C^1([0,T];L^2(\Omega))$ satisfying
$$
\norm{u}_{ C([0,T];H^1_0(\Omega))}+\norm{u}_{ C^1([0,T];L^2(\Omega))}
\leq 
C\left(\norm{u_0}_{H^1(\Omega)}+\norm{u_1}_{L^2(\Omega)}+\norm{F}_{L^1(0,T;L^2(\Omega))}\right).
$$ 
\fi
%%%%%%%%%%%%%%%%
\textbf{Step 2.} In this step, we consider solutions with higher regularity and we prove the second claim of the theorem. Without loss of generality we may assume that $N=1$, the general case can be deduced by combining elliptic regularity with classical iterative arguments. Applying Step 1, we can consider  $w$ the solution of the problem
$$\left\{
\begin{aligned}
& \partial_t^2 w-\Delta_\g w+a\partial_tw+b\partial_t^\alpha w +cw  = G(t,x), &&\quad \textrm{in } (0,T)\times \M,\\
& w(t,x) = 0,                                                                                                      &&\quad (t,x)\in (0,T)\times\pa\M,\\
& w(0,x) = 0,\quad \partial_t w(0,x) = 0,                                             &&\quad x\in\M,
\end{aligned}
\right.$$
with
$$G(t,x)=\partial_tF(t,x)-\partial_ta\partial_tu(t,x)-\partial_tb\partial_t^\alpha u(t,x)-\partial_tcu(t,x),\quad (t,x)\in(0,T)\times \M.$$
Recalling that $G\in L^1(0,T;L^2(\M))$ and applying Step 1, we deduce that $w\in C^1([0,T];H^1_0(\M))\cap C([0,T];L^2(\M))$. Now let us fix $v\in C^2([0,T];H^1_0(\M))\cap C^1([0,T];L^2(\M))$ defined by
$$v(t,x)=\int_0^tw(s,x)ds,\quad (t,x)\in(0,T)\times \M$$
and, in light of \eqref{l1a}, observe that $v$ solves the problem
$$\left\{
\begin{aligned}
& \partial_t^2 v-\Delta_\g v+a\partial_tv+c v   = G_2(t,x), &&\quad \textrm{in } (0,T)\times \M,\\
& v(t,x) = 0,                                                                                                      &&\quad (t,x)\in (0,T)\times\pa\M,\\
& v(0,x) = 0,\quad \partial_t v(0,x) = 0,                                             &&\quad x\in\M,
\end{aligned}
\right.$$
where
$$G_1(t,x)=\partial_ta(\pa_tu-\pa_tv)(t,x)+\partial_tc(u-v)(t,x),\quad (t,x)\in(0,T)\times\M,$$
$$G_2(t,x)=F(t,x)-\int_0^t(b\partial_t^\alpha w+\partial_tb\partial_t^\alpha u+G_1)(s,x)ds,\quad (t,x)\in(0,T)\times\M.$$
In addition, recalling that $w(0,x) = 0$, $x\in\M$, we deduce that
$$\begin{aligned}\int_0^t\partial_t^\alpha w(s,\cdot)ds&=\int_0^t\int_0^s\frac{1}{\Gamma(1-\gamma)} (s-\tau)^{-\gamma} \partial_\tau w(\tau,\cdot) d\tau ds\\
&=\int_0^t\frac{1}{\Gamma(1-\gamma)} \int_0^s \tau^{-\gamma} \partial_s w(s-\tau,\cdot) d\tau ds\\
&=\int_0^t\frac{1}{\Gamma(1-\gamma)} \partial_s\left(\int_0^s (s-\tau)^{-\gamma} w(\tau,\cdot) d\tau\right)ds\\
&=\frac{1}{\Gamma(1-\gamma)}\int_0^t (t-\tau)^{-\gamma} w(\tau,\cdot) d\tau=\partial_t^\alpha v(t,\cdot), \,\ t \in(0,T).\end{aligned}$$
Therefore, $v$ solves the problem 
\bel{l1c}\left\{
\begin{aligned}
& \partial_t^2 v-\Delta_\g v+a\partial_tv+b\partial_t^\alpha v+c v   = G_3(t,x), &&\quad \textrm{in } (0,T)\times \M,\\
& v(t,x) = 0,                                                                                                      &&\quad (t,x)\in (0,T)\times\pa\M,\\
& v(0,x) = 0,\quad \partial_t v(0,x) = 0,                                             &&\quad x\in\M,
\end{aligned}
\right.\ee
where
$$G_3(t,x)=F(t,x)-\int_0^t[\partial_tb(\partial_t^\alpha u-\partial_t^\alpha v)(s,x)+G_1(s,x)]ds,\quad (t,x)\in(0,T)\times\M.$$
It is clear the $u$ solves \eqref{l1c} and the uniqueness of the solution of \eqref{l1c} implies that $u=v\in C^2([0,T];L^2(\M))\cap C^1([0,T];H^1(\M))$. Finally, we have 

$$\left\{
\begin{aligned}
& -\Delta_\g u = H(t,x), &&\quad (t,x)\in (0,T)\times \M,\\
& u(t,x) = 0,                                                                                                      &&\quad (t,x)\in (0,T)\times\pa\M,
\end{aligned}
\right.$$
with 
 $$H(t,x)=F(t,x)-\partial_t^2u(t,x)-a\partial_tu(t,x)- b \partial_t^{\alpha} u(t,x) -cu(t,x).$$ 
 Since $H\in C([0,T];L^2(\M))$, applying elliptic regularity, we deduce that $u\in C^2([0,T];L^2(\M))\cap C^1([0,T];H^1(\M))\cap C([0,T];H^2(\M))$, which completes the proof. \end{proof}

Applying Lemma \ref{l1}, we can show the following.
\begin{thm}\label{t4}Let $a,b,c \in C^3([0,T]\times \M)$ and let $f\in\mathcal J$. Then problem \eqref{eq1} admits a unique solution in $u\in C([0,T];H^4(\M))\cap C^2([0,T];H^2(\M))$.
    
\end{thm}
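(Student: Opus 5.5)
We lift the boundary datum and reduce to the homogeneous-boundary problem \eqref{eq2}, to which Lemma \ref{l1} applies with $N=3$. Let $f\in\mathcal J$. Since $\pa\M$ is smooth, there is a bounded right inverse $E$ of the trace map with $E:H^{s}(\pa\M)\to H^{s+\frac12}(\M)$ for all $s\geq \frac12$; in particular $E:H^{\frac92}(\pa\M)\to H^5(\M)$ and $E:H^{\frac52}(\pa\M)\to H^3(\M)$. Set $F_0(t,x):=(Ef(t,\cdot))(x)$. Then $F_0\in W^{3,1}(0,T;H^5(\M))\cap W^{5,1}(0,T;H^3(\M))$, $F_0|_{\pa\M}=f$, and $\partial_t^kF_0(0,\cdot)=0$ for $k=0,\ldots,4$. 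We look for $u=F_0+w$, where $w$ solves \eqref{eq2} with
$$F:=-\left(\partial_t^2F_0-\Delta_\g F_0+a\partial_tF_0+b\partial_t^\alpha F_0+cF_0\right).$$
Since $F_0(0,\cdot)=0$, the Caputo derivative is well defined and $\partial_t^k(\partial_t^\alpha F_0)=\partial_t^\alpha(\partial_t^k F_0)$ whenever $\partial_t^jF_0(0,\cdot)=0$ for $j<k+1$.

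\textbf{Regularity of $F$.} We check \eqref{l1a} with $N=3$, that is, $F\in W^{3-k,1}(0,T;H^k(\M))$ for $k=0,\ldots,3$ and $F=\partial_tF=\partial_t^2F=0$ at $t=0$.
\begin{itemize}
\item The term $\partial_t^2F_0$ lies in $W^{3,1}(0,T;H^3(\M))$, which suffices for every $k$.
\item The term $\Delta_\g F_0$ lies in $W^{3,1}(0,T;H^3(\M))$.
\item The terms $a\partial_tF_0$ and $cF_0$ are handled by $a,c\in C^3$.
\item The term $b\partial_t^\alpha F_0$ needs $\partial_t^\alpha F_0\in W^{3,1}(0,T;H^3)$. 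This holds because $\partial_t^3\partial_t^\alpha F_0=I^{1-\alpha}\partial_t^4F_0$ (using the vanishing of $\partial_t^jF_0(0)$, $j\leq3$), and the Riemann--Liouville integral $I^{1-\alpha}$ is bounded on $L^1(0,T;X)$ by Young's inequality.
\end{itemize}
The initial conditions $\partial_t^kF(0)=0$ for $k\leq2$ follow from $\partial_t^jF_0(0)=0$ for $j\leq4$, together with $\partial_t^k\partial_t^\alpha F_0(0)=I^{1-\alpha}\partial_t^{k+1}F_0|_{t=0}=0$. Lemma \ref{l1} then gives $w\in\bigcap_{k=0}^4C^{4-k}([0,T];H^k(\M))$, in particular $w\in C([0,T];H^4)\cap C^2([0,T];H^2)$.

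For $F_0$ itself, $W^{3,1}(0,T;H^5)\subset C^2([0,T];H^5)\subset C([0,T];H^4)\cap C^2([0,T];H^2)$. Hence $u=F_0+w$ has the claimed regularity and solves \eqref{eq1}.

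\textbf{Uniqueness.} If $u_1,u_2$ are two such solutions, their difference solves \eqref{eq2} with $F=0$, so the estimate \eqref{esta} forces it to vanish.

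\textbf{Main obstacle.} The delicate point is the bookkeeping of the fractional term: justifying the commutation of $\partial_t^k$ with $\partial_t^\alpha$ and the $L^1$-in-time bounds, which relies on the vanishing of four time derivatives of $f$ at $t=0$ and on $f\in W^{5,1}(0,T;H^{5/2}(\pa\M))$. Matching Sobolev indices through the lifting is routine.
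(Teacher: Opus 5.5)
Your proposal is correct and follows essentially the same route as the paper: lift $f$ by a right inverse of the trace, write $u=F_0+w$ with $w$ solving \eqref{eq2} for the source $F$, check \eqref{l1a} with $N=3$, and apply Lemma \ref{l1}. Your term-by-term verification of \eqref{l1a}, in particular the identity $\partial_t^3\partial_t^\alpha F_0=I^{1-\alpha}\partial_t^4F_0$ together with $L^1$-boundedness of $I^{1-\alpha}$, and your explicit uniqueness argument supply details the paper leaves implicit.
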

\begin{proof}
In light of \cite[Theorem 9.4, Chapter 1]{LM1}, we can define $L$ a bounded operator from $H^{s}(\partial \M)$ to $H^{s+\frac12}(\M)$, $s>0$, such that $Lh|_{\pa\M}=h$, $h\in H^s(\pa\M)$. Then, fixing $G$ defined by
$G(t,\cdot)=L(f(t,\cdot))$, $t\in[0,T]$,
we deduce that $G\in W^{3,1}(0,T;H^{5}( M))\cap W^{5,1}(0,T;H^3( M))$ and $G|_{(0,T)\times\pa\M}=f$.
Moreover, we have
\bel{t4b}\partial_t^kG(0,\cdot)=L(\partial_t^kh(0,\cdot))\equiv0,\quad k=0,\ldots,4.\ee
Using this property, we can  split the solution $u$ of \eqref{eq1} into two terms $u=G+v$ with $v$ solving the problem
$$\left\{
\begin{aligned}
& \partial_t^2 v-\Delta_\g v +a(t,x)\partial_tv+ b(t,x) \partial_t^{\alpha} v +c(t,x)v  = F(t,x), &&\quad (t,x)\in (0,T)\times \M,\\
& v(t,x) = 0,                                                                                                      &&\quad (t,x)\in (0,T)\times\pa\M,\\
& v(0,x) = 0,\quad \partial_t v(0,x) = 0,                                             &&\quad x\in\M,
\end{aligned}
\right.$$
with
$$F(t,x)=-(\partial_t^2 G-\Delta_\g G +a(t,x)\partial_tG+ b(t,x) \partial_t^{\alpha} G +c(t,x)G)(t,x),\quad (t,x)\in(0,T)\times \M.$$
Recalling that $G\in  W^{3,1}(0,T;H^{5}( M))\cap W^{5,1}(0,T;H^3( M))$ and applying \eqref{t4b}, we deduce that $F$ fulfills condition \eqref{l1a} with $N=3$. Thus applying Lemma \ref{l1}, we deduce that $v\in C([0,T];H^4(\M))\cap C^2([0,T];H^2(\M))$ and problem \eqref{eq1} admits a unique solution $u\in C([0,T];H^4(\M))\cap C^2([0,T];H^2(\M))$.

\end{proof}

Following Theorem \ref{t4}, for any $\epsilon\in(0,T)$, we can introduce the partial hyperbolic Dirichlet-to-Neumann map $\Lambda_{a,c,\epsilon}$ as a linear map from $\mathcal J$ to $L^2((T-\epsilon,T)\times\pa\M)^2$ defined by
$$\Lambda_{a,c,\epsilon}:\mathcal J\ni f\mapsto (\partial_\nu u|_{(T-\epsilon,T)\times\pa\M},\partial_\nu \Delta_\g u|_{(T-\epsilon,T)\times\pa\M}),$$
where $u\in C([0,T];H^4(\M))\cap C^2([0,T];H^2(\M))$ denotes the solution of \eqref{eq1}.

Similarly, let us consider the adjoint problem to \eqref{eq1}, given by 
\begin{equation}
\label{eq1*}
\left\{
\begin{aligned}
& \partial_t^2 v-\Delta_\g v -a(t,x)\partial_tv- \partial_t^{\alpha*}(b v) +(-\pa_ta(t,x)+c(t,x))v  = 0, &&\quad (t,x)\in (0,T)\times \M,\\
& v(t,x) = h(t,x),                                                                                                      &&\quad (t,x)\in (0,T)\times\pa\M,\\
& v(T,x) = 0,\quad \partial_t v(T,x) = 0,                                             &&\quad x\in\M,
\end{aligned}
\right.
\end{equation}
where
$$\pa_t^{\alpha*}(b v)(t,x)=\int_t^T\frac{(s-t)^{-\alpha}}{\Gamma(1-\alpha)}\partial_s (bv)(s,x)ds,\quad (t,x)\in(0,T)\times \M.$$
Let us first show the following useful properties of the nonlocal operators $\partial_t^\alpha$ and $\partial_t^{\alpha*}$.
\begin{lem}\label{l2}Fix $\phi_1,\phi_2\in C^1([0,T])$ such that $\phi_1(0)=\phi_2(T)=0$. Then, fixing $\widetilde{\phi_2}(t)=\phi_2(T-t)$, $t\in[0,T]$, the following properties
\bel{l2a}\partial_t^{\alpha*}\phi_2(t)=-\partial_t^\alpha\widetilde{\phi_2}(T-t),\quad t\in[0,T],\ee
\bel{l2b}\pa_t^{\alpha*}\phi_2(t)=\partial_t\left(\int_t^{T}\frac{(\tau-t)^{-\alpha}}{\Gamma(1-\alpha)} \phi_2(\tau)d\tau\right),\quad t\in[0,T],\ee
\bel{l2c}\int_0^T\partial_t^{\alpha}\phi_1(t)\phi_2(t)dt=-\int_0^T\phi_1(t)\partial_t^{\alpha*}\phi_2(t)dt\ee
 hold true.   
\end{lem}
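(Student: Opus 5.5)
All three identities follow by direct computation from the definitions of $\partial_t^\alpha$ and $\partial_t^{\alpha*}$. We use only changes of variables, the vanishing conditions $\phi_1(0)=\phi_2(T)=0$, and Fubini's theorem. The kernel $\tau^{-\alpha}$ is integrable near $0$ and $\phi_j'\in C([0,T])$, so every integral below converges absolutely.

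For \eqref{l2a}, note that $\widetilde{\phi_2}'(s)=-\phi_2'(T-s)$. By definition,
$$
\partial_t^\alpha\widetilde{\phi_2}(T-t)=\frac{1}{\Gamma(1-\alpha)}\int_0^{T-t}(T-t-s)^{-\alpha}\widetilde{\phi_2}'(s)\,ds .
$$
Substituting $\sigma=T-s$, so that $\sigma$ runs over $(t,T)$, the factor $(T-t-s)^{-\alpha}$ becomes $(\sigma-t)^{-\alpha}$. This gives
$$
\partial_t^\alpha\widetilde{\phi_2}(T-t)=-\frac{1}{\Gamma(1-\alpha)}\int_t^T(\sigma-t)^{-\alpha}\phi_2'(\sigma)\,d\sigma=-\partial_t^{\alpha*}\phi_2(t).
$$

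For \eqref{l2b}, set $\tau=t+r$ and write
$$
I(t):=\int_t^T(\tau-t)^{-\alpha}\phi_2(\tau)\,d\tau=\int_0^{T-t}r^{-\alpha}\phi_2(t+r)\,dr .
$$
We differentiate in $t$. The integrand has $t$-derivative $r^{-\alpha}\phi_2'(t+r)$, which is dominated by an integrable function, so differentiation under the integral sign is allowed. The variable upper limit contributes the boundary term $-(T-t)^{-\alpha}\phi_2(T)$, which vanishes because $\phi_2(T)=0$. Hence
$$
I'(t)=\int_0^{T-t}r^{-\alpha}\phi_2'(t+r)\,dr=\int_t^T(\tau-t)^{-\alpha}\phi_2'(\tau)\,d\tau ,
$$
and dividing by $\Gamma(1-\alpha)$ gives \eqref{l2b}.

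The only delicate point is the endpoint $t=T$, where the boundary term has the form $0\cdot\infty$. It is cleaner to avoid it. Since $\phi_2(T)=0$, we have $\phi_2(\tau)=-\int_\tau^T\phi_2'(\sigma)\,d\sigma$. Inserting this into $I(t)$ and applying Fubini gives
$$
I(t)=-\frac{1}{1-\alpha}\int_t^T(\sigma-t)^{1-\alpha}\phi_2'(\sigma)\,d\sigma .
$$
This function is differentiable on all of $[0,T]$, with no boundary term since $(\sigma-t)^{1-\alpha}$ vanishes at $\sigma=t$. Its derivative is $\int_t^T(\sigma-t)^{-\alpha}\phi_2'(\sigma)\,d\sigma$, which handles the endpoint as well.

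For \eqref{l2c}, apply Fubini to the left-hand side:
$$
\int_0^T\partial_t^\alpha\phi_1(t)\,\phi_2(t)\,dt=\frac{1}{\Gamma(1-\alpha)}\int_0^T\phi_1'(s)\left(\int_s^T(t-s)^{-\alpha}\phi_2(t)\,dt\right)ds=\int_0^T\phi_1'(s)\,\frac{I(s)}{\Gamma(1-\alpha)}\,ds .
$$
Now integrate by parts in $s$. The boundary terms vanish: at $s=0$ because $\phi_1(0)=0$, and at $s=T$ because $I(T)=0$. By \eqref{l2b}, the derivative $I'(s)/\Gamma(1-\alpha)$ equals $\partial_s^{\alpha*}\phi_2(s)$, so the remaining term is exactly $-\int_0^T\phi_1\,\partial_t^{\alpha*}\phi_2\,dt$. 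This proves \eqref{l2c}.

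The main obstacle is justifying the differentiation of $I$ near the singular kernel and at the endpoint $t=T$. The Fubini representation of $I$ in terms of $\phi_2'$ resolves both issues.
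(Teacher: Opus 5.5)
Your proof is correct. For \eqref{l2a} and \eqref{l2b} it is the paper's argument: the reflection $s\mapsto T-s$, and the substitution $\tau=t+r$ with the boundary term $-(T-t)^{-\alpha}\phi_2(T)$ removed by $\phi_2(T)=0$.

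For \eqref{l2c} you do the same two operations in the opposite order. You apply Fubini first, then integrate by parts in $s$ against $I(s)$, with boundary terms vanishing by $\phi_1(0)=0$ and $I(T)=0$. Finally \eqref{l2b} identifies $I'/\Gamma(1-\alpha)$ with $\partial_t^{\alpha*}\phi_2$. The paper instead first writes $\partial_t^\alpha\phi_1$ in Riemann--Liouville form $\frac{1}{\Gamma(1-\alpha)}\partial_t\int_0^t s^{-\alpha}\phi_1(t-s)\,ds$, using $\phi_1(0)=0$. It then integrates by parts in $t$ to move that derivative onto $\phi_2$, using $\phi_2(T)=0$, and only then applies Fubini. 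So the paper relies on the forward analogue of \eqref{l2b} rather than on \eqref{l2b} itself. Your ordering makes \eqref{l2b} the load-bearing step, and your sign is right; the paper's final displayed equality drops the minus sign.

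One minor point: the detour through $(\sigma-t)^{1-\alpha}$ still needs a line of justification. After differentiation, the singularity $(\sigma-t)^{-\alpha}$ moves with $t$, so there is no $t$-uniform majorant. The simplest fix is one more Fubini,
$I(t)=-\int_t^T\bigl(\int_s^T(\sigma-s)^{-\alpha}\phi_2'(\sigma)\,d\sigma\bigr)\,ds$.
The inner integral is continuous in $s$, so $I\in C^1([0,T])$ with the expected derivative. This is exactly what your integration by parts in \eqref{l2c} requires.
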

\begin{proof}
Let us start with the first claim.
Applying the change of variable $\tau=T-s$, we find

$$\begin{aligned}
\pa_t^{\alpha*}\phi_2(t)=\int_t^T\frac{(s-t)^{-\alpha}}{\Gamma(1-\alpha)} \phi_2'(s)ds&=\int_0^{T-t}\frac{((T-\tau)-t)^{-\alpha}}{\Gamma(1-\alpha)} \phi_2'(T-\tau)d\tau\\
&=\int_0^{T-t}\frac{(T-t-\tau)^{-\alpha}}{\Gamma(1-\alpha)} (-\widetilde{\phi_2}'(\tau))d\tau\\
&=-\partial_t^\alpha\widetilde{\phi_2}(T-t).\end{aligned}$$
This proves \eqref{l1a}. Moreover, recalling that $\phi_2(T)=0$, we get
$$\begin{aligned}\partial_t\left(\int_t^{T}\frac{(\tau-t)^{-\alpha}}{\Gamma(1-\alpha)} \phi_2(\tau)d\tau\right)&=\partial_t\left(\int_0^{T-t}\frac{s^{-\alpha}}{\Gamma(1-\alpha)} \phi_2(t+s)ds\right)\\
&=-\frac{(T-t)^{-\alpha}}{\Gamma(1-\alpha)} \phi_2(T)+\int_0^{T-t}\frac{s^{-\alpha}}{\Gamma(1-\alpha)} \phi_2'(t+s)ds\\
&=\int_t^{T}\frac{(\tau-t)^{-\alpha}}{\Gamma(1-\alpha)} \phi_2'(\tau)d\tau=\pa_t^{\alpha*}\phi_2(t).\end{aligned}$$
This identity proves \eqref{l2b}.

Now, let us consider \eqref{l2c}. Integrating by parts, using the fact that $\phi_1(0)=\phi_2(T)=0$ and applying Fubini's theorem, we obtain
$$\begin{aligned}\int_0^T\partial_t^{\alpha}\phi_1(t)\phi_2(t)dt&=\frac{1}{\Gamma(1-\alpha)}\int_0^T \left(\int_0^t (t-s)^{-\alpha}\phi_1'(s) ds\right)\phi_2(t)dt\\
&=\frac{1}{\Gamma(1-\alpha)}\int_0^T \partial_t\left(\int_0^t s^{-\alpha}\phi_1(t-s) ds\right)\phi_2(t)dt\\
&=-\frac{1}{\Gamma(1-\alpha)}\int_0^T \int_0^t (t-s)^{-\alpha}\phi_1(s) \phi_2'(t)dsdt\\
&=-\frac{1}{\Gamma(1-\alpha)}\int_0^T \phi_1(s) \left(\int_s^T (t-s)^{-\alpha} \phi_2'(t)dt\right)ds=\int_0^T\phi_1(t)\partial_t^{\alpha*}\phi_2(t)dt.\end{aligned}$$
Therefore, we obtain \eqref{l2c}. This completes the proof of the lemma.
\end{proof}

Consider also the adjoint problem to \eqref{eq2} given by
\begin{equation}
\label{eq2*}
\left\{
\begin{aligned}
& \partial_t^2 v-\Delta_\g v -a(t,x)\partial_tv- \partial_t^{\alpha*}(bv) +(-\pa_ta(t,x)+c(t,x))v  = G(t,x), &&\quad (t,x)\in (0,T)\times \M,\\
& v(t,x) = 0,                                                                                                      &&\quad (t,x)\in (0,T)\times\pa\M,\\
& v(T,x) = 0,\quad \partial_t v(T,x) = 0,                                             &&\quad x\in\M.
\end{aligned}
\right.
\end{equation}
Repeating the argumentation of Lemma \ref{l1} and applying Lemma \ref{l2}, we can prove the following.
\begin{lem}
\label{l3}
Let $a,b,\partial_tb, c\in L^\infty(0,T;L^\infty(\M))$ and
$G\in L^1(0,T; L^2(\M))$. Then there exists a unique solution $v\in \mathcal{C}([0,T]; H_0^1(\M))\cap \mathcal{C}^1([0,T]; L^2(\M))$ to \eqref{eq2*} which satisfies the following estimate \bel{estb}\norm{v}_{C([0,T]; H^1(\M))}+\norm{v}_{C^1([0,T]; L^2(\M))}\leq C\norm{G}_{L^1(0,T;L^2(\M))},\ee
$C>0$ depending only on $T$, $(\M,\g)$, $\alpha$, $a$, $b$, $c$. Moreover, let $N\in \mathbb N$ and assume that
$a,b,\partial_tb,c\in C^{N}([0,T]\times \M)$ and \eqref{l1a} is fulfilled.  Then 
$$v\in \bigcap_{k=0}^{N+1}C^{N+1-k}([0,T];H^k(\M)).$$
\end{lem}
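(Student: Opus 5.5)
The plan is to reduce \eqref{eq2*} to a forward problem of the type \eqref{eq2} by reversing time, and then repeat the two steps of Lemma \ref{l1}. Set $w(t,x)=v(T-t,x)$, $\tilde a(t,x)=a(T-t,x)$, $\tilde b(t,x)=b(T-t,x)$, $\tilde c(t,x)=c(T-t,x)$ and $\tilde G(t,x)=G(T-t,x)$. The final conditions $v(T)=\partial_tv(T)=0$ become the initial conditions $w(0)=\partial_tw(0)=0$. The term $-a\partial_tv$ becomes $\tilde a\partial_tw$. By \eqref{l2a}, applied pointwise in $x$ to $\phi_2=bv$ (which vanishes at $t=T$), the nonlocal term satisfies $-\partial_t^{\alpha*}(bv)(T-t)=\partial_t^\alpha(\tilde b w)(t)$. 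Thus $w$ solves
$$\partial_t^2w-\Delta_\g w+\tilde a\partial_tw+\partial_t^\alpha(\tilde bw)+(\partial_t\tilde a+\tilde c)w=\tilde G,$$
with homogeneous Dirichlet data and zero initial data. This is the forward equation \eqref{eq2}, except that the attenuation now acts on the product $\tilde b w$.

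Next I expand this product. Since $w(0)=0$, the Caputo derivative of a product can be written as
$$\partial_t^\alpha(\tilde bw)(t)=\frac{1}{\Gamma(1-\alpha)}\int_0^t(t-s)^{-\alpha}\big(\tilde b(s)\partial_sw(s)+\partial_s\tilde b(s)w(s)\big)ds .$$
In the Duhamel formulation \eqref{FP-eq2}, this changes the kernel $B(t,\tau)$ into a kernel evaluated at the integration variable $s$:
$$\tilde B(s,\tau)=\frac{\tau^{-\alpha}}{\Gamma(1-\alpha)}\begin{pmatrix}0&0\\ \partial_t\tilde b(s)&\tilde b(s)\end{pmatrix},$$
with the term $(\partial_t\tilde a+\tilde c)$ absorbed into $A$. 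Because $a,b,\partial_tb,c\in L^\infty$, the bound \eqref{FP-eq3} still holds with $\tau^{-\alpha}$, so the estimate \eqref{eq-tg} carries over verbatim. Some iterate of the fixed-point map is then contractive on $C([0,T];\mathcal H)$, which gives existence and uniqueness. Gronwall's inequality gives \eqref{estb}, and undoing $t\mapsto T-t$ returns $v$. One small point: the term $\partial_ta$ in the coefficient of $v$ requires $\partial_ta\in L^\infty$. I would either read the hypothesis as including it, or treat $\partial_ta\,v$ through the same Duhamel bound, which only needs boundedness.

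For higher regularity I repeat Step 2 of Lemma \ref{l1} for $w$. Differentiate the equation in time and solve for $z=\partial_tw$ by the first part, with the source $\partial_t\tilde G$ minus the commutator terms $\partial_t\tilde a\,\partial_tw$, $\partial_t(\partial_t\tilde a+\tilde c)w$ and the derivative of the nonlocal term. Then set $\tilde v=\int_0^tz$ and use the identity $\int_0^t\partial_s^\alpha z\,ds=\partial_t^\alpha \tilde v$ (valid since $z(0)=0$) to show that $\tilde v$ solves the same problem. Uniqueness then gives $\tilde v=w$, and elliptic regularity applied to $-\Delta_\g w=\dots$ gains spatial derivatives; iterating in $N$ finishes the argument. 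The compatibility conditions \eqref{l1a}, imposed for $G$ at $t=T$ and hence for $\tilde G$ at $t=0$, ensure that the differentiated problems have zero initial data.

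The main obstacle is the time derivative of $\partial_t^\alpha(\tilde b w)$. Differentiating the product produces $\partial_t^\alpha(\partial_t\tilde b\,w)$, which must be controlled in $L^1(0,T;L^2)$ using only $C^N$ regularity of $b$ and $\partial_tb$. I would handle it as in Lemma \ref{l1}: move $\tilde b$ inside via the product expansion above, so that each derivative costs one derivative of $b$. This is exactly why the hypothesis includes $\partial_tb\in C^N$, one more derivative than in Lemma \ref{l1}.
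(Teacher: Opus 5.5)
Your proposal is correct and follows the argument the paper intends: the paper only says that Lemma \ref{l3} follows by repeating Lemma \ref{l1} with the help of Lemma \ref{l2}. Your reversal $t\mapsto T-t$ via \eqref{l2a} (as in Step 2 of the proof of Proposition \ref{p2}), the product expansion of $\partial_t^\alpha(\tilde b w)$ that explains the hypothesis on $\partial_t b$, and the fixed-point/Gronwall and differentiate-then-integrate steps spell out that sketch. One caveat, which is a flaw in the statement rather than in your argument: as you noticed for the first part, the coefficient $-\partial_t a+c$ forces $\partial_t a\in L^\infty$ there, and through the commutator $\partial_t(\partial_t\tilde a+\tilde c)w$ it also forces $\partial_t a\in C^N$ in the regularity part, which you did not flag; the compatibility conditions \eqref{l1a} must likewise be read at $t=T$.
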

Using Lemma \ref{l1}, \ref{l3}, we consider also the following.
\begin{prop}\label{p1}Let $a,c \in C^1([0,T]\times \M)$, $b\in C^2([0,T]\times \M)$ and let $f,h\in H^1_0((0,T)\times\pa\M)$. Then problem \eqref{eq1} $($resp. \eqref{eq1*}$)$ admits a unique solution  $u\in C([0,T];H^1_0(\M))\cap  C^1([0,T];L^2(\M))$ $($resp. $v\in C([0,T];H^1_0(\M))\cap  C^1([0,T];L^2(\M))$$)$ with $\partial_\nu u\in L^2((0,T)\times \pa\M)$ $($resp. $\partial_\nu v\in L^2((0,T)\times \pa\M)$$)$. Moreover, the following estimates holds true
\bel{p1a}\norm{u}_{C([0,T]; H^1(\M))}+\norm{u}_{C^1([0,T]; L^2(\M))}+\norm{\partial_\nu u}_{L^2((0,T)\times\pa\M)}\leq C\norm{f}_{H^1((0,T)\times \pa\M))},\ee
\bel{p1b}\norm{v}_{C([0,T]; H^1(\M))}+\norm{v}_{C^1([0,T]; L^2(\M))}+\norm{\partial_\nu v}_{L^2((0,T)\times\pa\M)}\leq C\norm{h}_{H^1((0,T)\times \pa\M))},\ee
 with $C>0$ depending only on $T$, $M$, $\alpha$, $a$, $b$, $c$.   
\end{prop}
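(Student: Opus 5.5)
The plan is to adapt the classical Lasiecka--Lions--Triggiani argument for low-regularity Dirichlet data, $f\in H^1_0((0,T)\times\pa\M)$, to the nonlocal setting. The memory term is treated perturbatively through Duhamel's formula, exactly as in Lemma \ref{l1}.

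Let $w$ be the solution of the free wave equation $\partial_t^2w-\Delta_\g w=0$ with $w|_{(0,T)\times\pa\M}=f$ and zero initial data. By \cite[Theorem 2.1]{LLT} (see also \cite{KKL}), $w\in C([0,T];H^1(\M))\cap C^1([0,T];L^2(\M))$ and $\partial_\nu w\in L^2((0,T)\times\pa\M)$. All three norms are bounded by $C\norm{f}_{H^1((0,T)\times\pa\M)}$; this uses the compatibility $f(0,\cdot)=0$, which holds since $f\in H^1_0$. We then write $u=w+z$, where $z$ solves \eqref{eq2} with source
$$F=-(a\partial_tw+b\partial_t^\alpha w+cw).$$
Since $\partial_tw\in C([0,T];L^2(\M))$ and $\partial_t^\alpha w$ is the convolution of $\partial_t w$ with the integrable kernel $t^{-\alpha}/\Gamma(1-\alpha)$, we get $F\in L^1(0,T;L^2(\M))$ with norm bounded by $C\norm{f}_{H^1}$. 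Lemma \ref{l1} then gives $z\in C([0,T];H^1_0(\M))\cap C^1([0,T];L^2(\M))$ with the estimate \eqref{esta}. Uniqueness of $u$ follows from uniqueness in Lemma \ref{l1}, since the difference of two solutions solves \eqref{eq2} with $F=0$.

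The main obstacle is the normal derivative $\partial_\nu z$. Here $F$ lies only in $L^1(0,T;L^2)$, and hidden regularity for the wave equation with such a source yields $\partial_\nu z\in L^2$ only if the lower-order terms are handled as sources. We rewrite the equation for $z$ as
$$\partial_t^2 z-\Delta_\g z=F-a\partial_tz-b\partial_t^\alpha z-cz=:\widetilde F,$$
with zero Dirichlet data. By the previous step, $\widetilde F\in L^1(0,T;L^2(\M))$ with norm $\leq C\norm{f}_{H^1}$. The classical hidden regularity estimate for $\partial_t^2-\Delta_\g$ with $L^1(0,T;L^2)$ source, zero initial data and zero boundary data then gives $\norm{\partial_\nu z}_{L^2((0,T)\times\pa\M)}\leq C\norm{\widetilde F}_{L^1(0,T;L^2)}$. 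This estimate is proved by the multiplier $X z$, with $X$ a smooth vector field extending $\nu$. Combining it with the bound on $w$ gives \eqref{p1a}.

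For the adjoint problem \eqref{eq1*} we proceed in the same way, using the time reversal $t\mapsto T-t$. By \eqref{l2a} this turns $\partial_t^{\alpha*}$ into $-\partial_t^\alpha$ and the final conditions into initial conditions. Write $v=\widetilde w+y$, where $\widetilde w$ is the free wave with data $h$ and zero final conditions. The term $\partial_t^{\alpha*}(b\widetilde w)$ equals $\partial_t^{\alpha*}$ applied to a function of $C^1([0,T];L^2)$ vanishing at $T$. Expanding $\partial_s(b\widetilde w)=\partial_sb\,\widetilde w+b\,\partial_s\widetilde w$ shows it lies in $C([0,T];L^2(\M))$; this is where $b\in C^2$ (in fact $\partial_t b$ bounded) is used, together with $a\in C^1$ for the term $\partial_ta\, v$. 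Lemma \ref{l3} then replaces Lemma \ref{l1}, and the same hidden regularity argument yields \eqref{p1b}.
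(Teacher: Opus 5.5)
Your proposal is correct and follows essentially the same route as the paper. You split $u=w+y$, with $w$ the free wave with Dirichlet data $f$ (controlled by \cite[Theorem 2.30]{KKL} or \cite{LLT}), obtain $y$ from Lemma \ref{l1} with source $-(a\partial_t w+b\partial_t^\alpha w+cw)$, and then recover $\partial_\nu y\in L^2((0,T)\times\pa\M)$ by moving the damping, memory and potential terms to the right-hand side and applying hidden regularity for $\partial_t^2-\Delta_\g$ with an $L^1(0,T;L^2(\M))$ source; the adjoint case goes through in the same way via time reversal and Lemma \ref{l3}, which the paper only calls ``similar.''
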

\begin{proof}
We will only consider the result for problem \eqref{eq1}, the proof being similar for \eqref{eq1*}. In view of
\cite[Theorem 2.30]{KKL} (see also \cite[Theorem 2.1]{LLT}), there exists a unique $C([0,T];H^1_0(\M))\cap  C^1([0,T];L^2(\M))$-solution  $w$ to the IBVP
$$\left\{
\begin{aligned}
& \partial_t^2 w-\Delta_\g w   = 0, &&\quad (t,x)\in (0,T)\times \M,\\
& w(t,x) = f(t,x),                                                                                                      &&\quad (t,x)\in (0,T)\times\pa\M,\\
& w(0,x) = 0,\quad \partial_t w(0,x) = 0,                                             &&\quad x\in\M.
\end{aligned}
\right.$$
Moreover, $\partial_\nu w\in L^2((0,T)\times\pa\M)$ and we have
\bel{p1c}\norm{w}_{C([0,T]; H^1(\M))}+\norm{w}_{C^1([0,T]; L^2(\M))}+\norm{\partial_\nu w}_{L^2((0,T)\times\pa\M)}\leq C\norm{f}_{H^1((0,T)\times \pa\M))},\ee
with $C>0$ depending on $T$ and $M$.
Thus, we can split the solution $u$ of \eqref{eq1} into two terms $u=y+w$, where $y$ solves the IBVP
\bel{p1d}
\left\{
\begin{aligned}
& \partial_t^2 y-\Delta_\g y +a(t,x)\partial_ty+ b(t,x) \partial_t^{\alpha} y +c(t,x)y  = F_1(t,x), &&\quad (t,x)\in (0,T)\times \M,\\
& y(t,x) = 0,                                                                                                      &&\quad (t,x)\in (0,T)\times\pa\M,\\
& y(0,x) = 0,\quad \partial_t y(0,x) = 0,                                             &&\quad x\in\M.
\end{aligned}
\right.
\ee
with $F_1=-a\partial_tw- b \partial_t^{\alpha} w -cw$. It is clear that $F_1\in L^1(0,T;L^2(\M))$ and, applying estimate \eqref{p1c}, we obtain 
$$\norm{F_1}_{L^1(0,T;L^2(\M))}\leq C(\norm{w}_{C([0,T]; H^1(\M))}+\norm{w}_{C^1([0,T]; L^2(\M))})\leq C\norm{f}_{H^1((0,T)\times \pa\M))},$$
with $C>0$ depending only on $T$, $M$, $\alpha$, $a$, $b$, $c$. Therefore, in light of Lemma \ref{l1}, problem \eqref{p1d} admits a unique solution $y\in C([0,T];H^1_0(\M))\cap  C^1([0,T];L^2(\M))$ satisfying the estimate 
\bel{p1f}
\norm{y}_{C([0,T]; H^1(\M))}+\norm{y}_{C^1([0,T]; L^2(\M))}\leq C\norm{F_1}_{L^1(0,T;L^2(\M))}\leq C\norm{f}_{H^1((0,T)\times \pa\M))},\ee
with $C>0$ depending only on $T$, $M$, $\alpha$, $a$, $b$, $c$. Finally, fixing $F_2=-a\partial_ty- b \partial_t^{\alpha} y -cy$, we observe that $F_2\in L^1(0,T;L^2(\M))$ and $y$ solves the IBVP
$$\left\{
\begin{aligned}
& \partial_t^2 y-\Delta_\g y   = F_1+F_2, &&\quad (t,x)\in (0,T)\times \M,\\
& y(t,x) = 0,                                                                                                      &&\quad (t,x)\in (0,T)\times\pa\M,\\
& y(0,x) = 0,\quad \partial_t y(0,x) = 0,                                             &&\quad x\in\M.
\end{aligned}
\right.$$
Thus, applying \cite[Theorem 2.30]{KKL}, we deduce that $\partial_\nu y\in L^2((0,T)\times\pa\M)$ and
$$\norm{\partial_\nu y}_{L^2((0,T)\times\pa\M)}\leq C\norm{F_1+F_2}_{L^1(0,T;L^2(\M))}\leq C(\norm{F_1}_{L^1(0,T;L^2(\M))}+\norm{y}_{C^1([0,T]; L^2(\M))}). $$
In view of \eqref{p1f}, we obtain 
\bel{p1g}\norm{\partial_\nu y}_{L^2((0,T)\times\pa\M)}\leq C\norm{f}_{H^1((0,T)\times \pa\M))},\ee
with $C>0$ depending only on $T$, $M$, $\alpha$, $a$, $b$, $c$. Combining all the above mentioned properties, we deduce that $u=w+y$ is the unique $ C([0,T];H^1_0(\M))\cap  C^1([0,T];L^2(\M))$-solution of \eqref{eq1} and it satisfies $\partial_\nu u=\partial_\nu w+\partial_\nu y\in L^2((0,T)\times\pa\M)$. Finally, combining \eqref{p1c}, \eqref{p1f} and \eqref{p1g}, we deduce that \eqref{p1a} holds true. This completes the proof of the proposition.

\end{proof}

Following Proposition \ref{p1}, we can define the hyperbolic DN map $\mathcal N_{a,c}$, associated with \eqref{eq1}, as a bounded linear map from $H^1_0((0,T)\times\pa\M)$ to $L^2((0,T)\times\pa\M)$ defined by
$$\mathcal N_{a,c}:H^1_0((0,T)\times\pa\M)\ni f\mapsto \partial_\nu u|_{(0,T)\times\pa\M},$$
where $u$ is the unique $ C([0,T];H^1_0(\M))\cap  C^1([0,T];L^2(\M))$-solution of \eqref{eq1}.
\subsection{Intermediate results}

In this subsection we consider estimates of solutions of \eqref{eq2} and \eqref{eq2*} that will be useful for the construction of geometric optics in the next section. These results can be stated as follows.

\begin{prop}
\label{p2}
Let $a,b \in C^1([0,T];C( M))$, $c\in C([0,T]\times \M)$,  $F \in L^2((0,T)\times \M))$ and suppose $u$ $($resp. $v$$)$ is the unique solution to \eqref{eq2} $($resp. \eqref{eq2*}$)$. Then the following estimate holds:
\bel{p2a}\begin{aligned}
&\|u\|_{C([0,T];L^2(\M))} \leq C  \norm{\int_0^t F(s) \,ds}_{L^{2}((0,T)\times \M)}\\
&\left(\textrm{resp. }\|v\|_{C([0,T];L^2(\M))} \leq C  \norm{\int_{T-t}^T G(s) \,ds}_{L^{2}((0,T)\times \M)}\right),\end{aligned}
\ee
with $C>0$ independent of $F$ and $G$.
\end{prop}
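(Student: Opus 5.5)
The idea is to integrate the equation once in time. This turns the source $F$ into its primitive and lets us apply an energy estimate at one lower level of regularity. Set
$$w(t,x)=\int_0^t u(s,x)\,ds,\qquad \mathcal F(t,x)=\int_0^t F(s,x)\,ds .$$
Since $u(0)=\partial_tu(0)=0$, we have $w(0)=\partial_tw(0)=0$, $w|_{(0,T)\times\pa\M}=0$ and $\partial_t w=u$. Integrating \eqref{eq2} over $(0,t)$ gives
$$\partial_t^2 w-\Delta_\g w + a\,\partial_t w + b\,\partial_t^\alpha w + c\, w = \mathcal F + R, $$
where $R$ collects the commutator terms produced by moving the time-dependent coefficients out of the time integral:
$$R(t)=\int_0^t \big(\partial_s a\, u\big)(s)\,ds+\int_0^t\partial_s b(s)\,\big(I^{1-\alpha}u\big)(s)\,ds-\int_0^t \big(c\,u\big)(s)\,ds+c(t)\,w(t).$$
Here $I^{1-\alpha}$ denotes the Riemann--Liouville integral $I^{1-\alpha}u(t)=\frac1{\Gamma(1-\alpha)}\int_0^t(t-\tau)^{-\alpha}u(\tau)\,d\tau$.

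The identity $\int_0^t b\,\partial_s^\alpha u\,ds = b(t)\,I^{1-\alpha}u(t)-\int_0^t \partial_s b\, I^{1-\alpha}u\,ds$ follows from integration by parts together with $\partial_s^\alpha u=\partial_s I^{1-\alpha}u$, which holds because $u(0)=0$. Combined with the computation at the end of Step 2 of Lemma \ref{l1}, which gives $I^{1-\alpha}u=\partial_t^\alpha w$, this produces the form above. At this stage we only need $u$ to be regular enough for the manipulations; by density it suffices to take $F$ smooth and use Lemma \ref{l1}.

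Next we apply the energy estimate \eqref{esta} of Lemma \ref{l1} to $w$, treating $R$ as part of the source:
$$\|u\|_{C([0,T];L^2(\M))}=\|\partial_t w\|_{C([0,T];L^2)}\le C\big(\|\mathcal F\|_{L^1(0,T;L^2)}+\|R\|_{L^1(0,T;L^2)}\big).$$
Each term of $R(t)$ is bounded in $L^2(\M)$ by $C\int_0^t\|u(s)\|_{L^2}\,ds$. For the $b$-term we use $\|I^{1-\alpha}u(s)\|\le C\int_0^s(s-\tau)^{-\alpha}\|u(\tau)\|\,d\tau$ and Fubini, exactly as in \eqref{eq-tg}. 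For the last term we use $\|w(t)\|\le\int_0^t\|u\|$.

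To obtain a Gronwall-closable bound, the estimate must be applied on each subinterval $(0,t)$. This is legitimate by causality, since the solution on $(0,t)$ depends only on the data on $(0,t)$. Setting $D(t)=\sup_{[0,t]}\|u\|_{L^2}$, we get
$$D(t)\le C\|\mathcal F\|_{L^1(0,T;L^2)}+C\int_0^t D(s)\,ds .$$
Gronwall's inequality, followed by Cauchy--Schwarz $\|\mathcal F\|_{L^1(0,T;L^2)}\le \sqrt T\,\|\mathcal F\|_{L^2((0,T)\times\M)}$, then yields \eqref{p2a} for $u$. Only $C^1$ regularity in time of $a$ and $b$ and continuity of $c$ are used, which matches the hypotheses.

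For the adjoint problem \eqref{eq2*}, we reverse time with $\tilde v(t)=v(T-t)$. By \eqref{l2a}, the operator $\partial_t^{\alpha*}$ becomes $-\partial_t^\alpha$ acting on $\widetilde{bv}$, so $\tilde v$ solves a forward problem of the type \eqref{eq2} with source $\tilde G(t)=G(T-t)$. The nonlocal term is now $\partial_t^\alpha(\tilde b\tilde v)$; we expand it as $\tilde b\,\partial_t^\alpha \tilde v$ plus a term of the form $I^{1-\alpha}(\partial_t\tilde b\,\tilde v)$, which is bounded in the same way. The same integration argument then gives the bound with $\int_0^t\tilde G=\int_{T-t}^T G$.

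The main obstacle is the commutator bookkeeping with the nonlocal term. One must check that every remainder is controlled by $\int_0^t\|u\|_{L^2}$ and not by $\|\nabla u\|$ or $\|\partial_t u\|$. A further care point in the adjoint case is that the product rule generates the term involving $\partial_t b$, and $\partial_t b$ is only continuous; since that term carries no time derivative of $v$, this suffices.
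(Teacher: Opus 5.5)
Your forward argument is correct and is essentially the paper's. It uses the same primitive $w=\int_0^t u$, the same integrations by parts in time for the $a$- and $b$-terms (via $\partial_t^\alpha u=\partial_t I^{1-\alpha}u$, since $u(0)=0$), and Gronwall. The only difference is bookkeeping. The paper moves every integrated lower-order term into the source of $\partial_t^2-\Delta_\g$ and redoes the energy identity with multiplier $\partial_t w$. You instead keep $a\partial_t w+b\partial_t^\alpha w+cw$ on the left and reuse \eqref{esta} on $[0,t]$ through causality. Your version is a bit shorter and directly gives the slightly stronger bound by $\|\int_0^t F\,ds\|_{L^1(0,T;L^2(\M))}$. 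Your remark that one may take $F$ smooth and invoke Lemma \ref{l1} is not usable under the present hypotheses, since its regularity part needs $C^N$ coefficients and compatibility conditions. It is also unnecessary: $u\in C([0,T];H^1_0(\M))\cap C^1([0,T];L^2(\M))$ suffices for all your manipulations.

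The adjoint step, however, relies on a false Leibniz rule. One has $\partial_t^\alpha(\tilde b\tilde v)=I^{1-\alpha}(\partial_t\tilde b\,\tilde v)+I^{1-\alpha}(\tilde b\,\partial_t\tilde v)$, and for time-dependent $b$ we have $I^{1-\alpha}(\tilde b\,\partial_t\tilde v)\neq\tilde b\,\partial_t^\alpha\tilde v$. The commutator you dropped is $\frac{1}{\Gamma(1-\alpha)}\int_0^t(t-s)^{-\alpha}(\tilde b(s)-\tilde b(t))\,\partial_s\tilde v(s)\,ds$. It contains exactly the time derivative of $\tilde v$ that your last paragraph says must not appear.

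The fix is to integrate in time before expanding. Since $\tilde b\tilde v\in C^1([0,T];L^2(\M))$ vanishes at $t=0$, you have $\int_0^t\partial_s^\alpha(\tilde b\tilde v)\,ds=I^{1-\alpha}(\tilde b\tilde v)(t)$. Its $L^2(\M)$-norm is at most $C\int_0^t(t-s)^{-\alpha}\|\tilde v(s)\|_{L^2(\M)}\,ds$. You can then proceed in either of two ways:
\begin{itemize}
\item put this whole term into the source and apply \eqref{esta} with zero fractional coefficient (Lemma \ref{l1} allows any bounded coefficients);
\item write it as $\tilde b\,\partial_t^\alpha\tilde w$ plus $I^{1-\alpha}(\tilde b\tilde v)-\tilde b\,I^{1-\alpha}\tilde v$, where the second piece obeys the same bound.
\end{itemize}
Either way, no derivative of $b$ is needed in the adjoint case. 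The rest of your argument then goes through, with the merely continuous zeroth-order coefficient $(-\partial_t a+c)(T-t)$ and the $C^1$ first-order coefficient $a(T-t)$. The paper never meets this issue: with the plain wave operator on the left, it only ever needs a bound on $\int_0^t\partial_s^\alpha(\tilde b\tilde v)\,ds$.
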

\begin{proof} We divide the proof into two steps starting with estimate \eqref{p2a} for the solution of \eqref{eq2}, then considering the solution of \eqref{eq2*}.

\textbf{Step 1.} We set $w(t,x):=\int_0^t u(s,x)\,ds$, $(t,x)\in[0,T]\times\M$, and note that $w$ solves the IBVP
\begin{equation}
\label{eq10}
\left\{
\begin{aligned}
& \partial_t^2 w-\Delta_\g w   = H, &&\quad \textrm{in } (0,T)\times \M,\\
& w(t,x) = 0,                                                                                                      &&\quad (t,x)\in (0,T)\times\pa\M,\\
& w(0,x) = 0,\quad \partial_t w(0,x) = 0,                                             &&\quad x\in\M,
\end{aligned}
\right.
\end{equation}
with
$$H(t,x):=-\int_0^t[a\partial_tu(s,x)+ b \partial_t^{\alpha} u(s,x) +cu(s,x)]ds+\int_0^tF(s,x)\,ds,\quad (t,x)\in(0,T)\times\M.$$
Recalling that $u\in \mathcal{C}([0,T]; H_0^1(\M))\cap \mathcal{C}^1([0,T]; L^2(\M))$, we get $w\in  C^2([0,T];L^2(\M))\cap  C^1([0,T];H^1_0(\M))$. In addition, since $H\in C([0,T];L^2(\M))$ and that $w$ solves the elliptic boundary value problem
$$\left\{ \begin{array}{rcll} -\Delta_\g  w = H+\partial_t^2w,&  \textrm{in } (0,T)\times \M ,\\
w(t,x)=0, & (t,x) \in (0,T)\times\partial \M,& \end{array}\right.$$
by the elliptic regularity, we obtain  $w\in C([0,T];H^2(\M))$ and it follows that $w\in  C^2([0,T];L^2(\M))\cap  C^1([0,T];H^1_0(\M))\cap C([0,T];H^2(\M))$. We define the energy $E(t)$ at time $t$ associated with $w$ and given by
\bel{p2b} E(t):=\int_\M \left(|\partial_tw|^2(t,x)+|\nabla_gw|_\g ^2(t,x)\right)\,dV_\g (x)\geq \int_\M |u(t,x)|^2\,dV_\g (x),\quad t\in[0,T].\ee
Multiplying \eqref{eq10} by $\overline{\partial_tw}$ and taking the real part, we find
\bel{p2c}\begin{aligned}E(t)&= 2\re \left(\int_0^t\int_\M \left(\pa_t^2w(s,x)-\Delta_\g w(s,x)\right)\overline{\partial_tw(s,x)}\,dV_\g (x)\,ds\right)\\
&=-2\re \left(\int_0^t\int_\M \left(\int_0^sa(\tau,x)\partial_tu(\tau,x)\,d\tau\right)\overline{\partial_tw(s,x)}\,dV_\g (x)\,ds\right)\\
&\ \ \ -2\re \left(\int_0^t\int_\M \left(\int_0^sb(\tau,x)\partial_t^\alpha u(\tau,x)\,d\tau\right)\overline{\partial_tw(s,x)}\,dV_\g (x)\,ds\right)\\
&\ \ \ -2\re \left(\int_0^t\int_\M \left(\int_0^sc(\tau,x)u(\tau,x)\,d\tau\right)\overline{\partial_tw(s,x)}\,dV_\g (x)\,ds\right)\\
&\ \ \ +2\re \left(\int_0^t\int_\M \left(\int_0^sF(\tau,x)\,d\tau\right)\overline{\partial_tw(s,x)}\,dV_\g (x)\,ds\right),\quad t\in[0,T].\end{aligned}\ee
Now let us fix $D$ defined on $[0,T]$ by 
$$D(t)=\sup_{s\in[0,t]}E(s),\quad t\in [0,T].$$
Applying  Cauchy-Schwarz inequality, for all $t\in[0,T]$, we get
\bel{p2d}\begin{aligned}&\abs{\int_0^t\int_\M \left(\int_0^sc(\tau,x)u(\tau,x)\,d\tau\right)\overline{\partial_tw(s,x)}\,dV_\g (x)\,ds}\\
&\leq T^{\frac 12} \norm{c}_{ L^{\infty}(0,T;L^\infty( M))}\int_0^t\left(\int_0^sE(\tau)d\tau\right)^{\frac12} D(s)^{\frac12}ds\\
&\leq T\norm{c}_{ L^{\infty}(0,T;L^\infty( M))}\int_0^tD(s)ds,\end{aligned}\ee

\bel{p2e}\begin{aligned}\abs{\int_0^t\int_\M \left(\int_0^sF(\tau,x)\,d\tau\right)\overline{\partial_tw(s,x)}\,dV_\g (x)\,ds}
&\leq \norm{\int_0^tF(s,\cdot)ds}_{L^2((0,T)\times \M))}\left(\int_0^tE(s)ds\right)^{\frac{1}{2}}\\
&\leq \norm{\int_0^tF(s,\cdot)ds}_{L^2((0,T)\times \M))}^2+\int_0^t D(s)\,ds.\end{aligned}\ee Moreover, recalling that $ a\in C^1([0,T];C(\M))$, $u\in  C^1([0,T];L^2(\M))$, with $u(0,\cdot)\equiv0$,  and integrating by parts, we get
$$\begin{aligned} &\int_0^t\int_\M \left(\int_0^sa\partial_tu(\tau,x)d\tau\right)\overline{\partial_tw(s,x)}\,dV_\g (x)\,ds\\
&=-\int_0^t\int_\M \left(\int_0^s\partial_tau(\tau,x)d\tau\right)\overline{\partial_tw(s,x)}\,dV_\g (x)\,ds+\int_0^t\int_\M a|\partial_tw(s,x)|^2\,dV_\g (x)\,ds.\end{aligned}$$
Thus, repeating the above argumentation, we find
\bel{p2f}
\begin{aligned}
&\abs{\int_0^t\int_\M \left(\int_0^sa\partial_tu(\tau,x)d\tau\right)\overline{\partial_tw(s,x)}\,dV_\g (x)\,ds} \leq C\int_0^t D(\tau)\,d\tau,\quad t\in[0,T],
\end{aligned}
\ee
with $C$ depending on $T$, $M$ and $a$. 
Recalling that $u(0,\cdot)\equiv0$, we deduce that, for all $(t,x)\in(0,T)\times \M$, we have
$$\partial_t^\alpha u(t,x)=\partial_t\left(\int_0^t\frac{\tau^{-\alpha}}{\Gamma(1-\alpha)}u(t-\tau,x)d\tau\right)=\partial_t\left(\int_0^t\frac{(t-\tau)^{-\alpha}}{\Gamma(1-\alpha)}u(\tau,x)d\tau\right).$$
Therefore, we obtain
$$\begin{aligned} &\int_0^t\int_\M \left(\int_0^sb\partial_t^\alpha u(\tau,x)d\tau\right)\overline{\partial_tw(s,x)}\,dV_\g (x)\,ds\\
&=-\int_0^t\int_\M \left(\int_0^s\partial_tb\left(\int_0^\tau \frac{(\tau-\tau_1)^{-\alpha}}{\Gamma(1-\alpha)}u(\tau_1,x)d\tau_1\right)d\tau\right)\overline{\partial_tw(s,x)}\,dV_\g (x)\,ds\\
&\ \ \ \ +\int_0^t\int_\M b\left(\int_0^s \frac{(s-\tau)^{-\alpha}}{\Gamma(1-\alpha)}u(\tau,x)d\tau\right)\overline{\partial_tw(s,x)}\,dV_\g (x)\,ds.\end{aligned}$$
It follows that

\bel{p2g}
\begin{aligned}
&\abs{\int_0^t\int_\M \left(\int_0^sb\partial_t^\alpha u(\tau,x)d\tau\right)\overline{\partial_tw(s,x)}\,dV_\g (x)\,ds} \\
&\leq C\int_0^t\left(\int_0^s\int_0^\tau (\tau-\tau_1)^{-\alpha}d\tau_1d\tau\right)D(s)ds+\int_0^t\left(\int_0^s (s-\tau)^{-\alpha}d\tau\right)D(s)ds\\
&\leq C\left(\int_0^ts^{2-\alpha}D(s)ds+\int_0^ts^{1-\alpha}D(s)ds\right)\leq C(T^{2-\alpha}+T^{1-\alpha})\int_0^tD(s)ds,\quad t\in[0,T],
\end{aligned}
\ee
with $C$ depending on $T$, $M$ and $b$. Combining the identity \eqref{p2c} with the estimates \eqref{p2e}-\eqref{p2g}, we obtain
$$D(t)\leq \norm{\int_0^tF(s,\cdot)ds}_{L^2((0,T)\times \M))}^2+C\int_0^t D(\tau)\,d\tau,\quad t\in[0,T],$$
with $C$ depending on $T$, $M$, $a$, $b$ and $c$. Then applying  Gronwall inequality yields
$$D(t)\leq \norm{\int_0^tF(s,\cdot)ds}_{L^2((0,T)\times \M))}e^{Ct},\quad t\in[0,T].$$
Choosing $t=T$, we get
$$\sup_{t\in[0,T]}E(t)=D(T)\leq \norm{\int_0^tF(s,\cdot)ds}_{L^2((0,T)\times \M))}e^{CT}$$
which clearly implies \eqref{p2a}.

\textbf{Step 2.} Let us consider \eqref{p2a} for the solution $v$ of \eqref{eq2*}. For this purpose, fix $\tilde{v}, \tilde{b}$ defined by
$$\tilde{v}(t,x)=v(T-t,x),\quad \tilde{b}(t,x)=b(T-t,x),\quad (t,x)\in[0,T]\times \M.$$
Recalling that $v\in C^1([0,T];L^2(\M))$, with $v(T,\cdot)\equiv0$, and applying Lemma \ref{l2}, we observe that
$$\partial_t^{\alpha*}(bv)(t,x)=\partial_t^{\alpha}(\tilde{b}\tilde{v})(T-t,x),\quad \in[0,T]\times \M.$$
Then, $\tilde{v}$ solves the problem
$$\left\{
\begin{aligned}
& \partial_t^2 \tilde{v}-\Delta_\g \tilde{v} +a(T-t,x)\partial_t\tilde{v}+  \partial_t^{\alpha}(\tilde{b}\tilde{v})  +c(T-t,x)v  = G(T-t,x), &&\quad (t,x)\in (0,T)\times \M,\\
& \tilde{v}(t,x) = 0,                                                                                                      &&\quad (t,x)\in (0,T)\times\pa\M,\\
& \tilde{v}(0,x) = 0,\quad \partial_t \tilde{v}(0,x) = 0,                                             &&\quad x\in\M.
\end{aligned}
\right.$$
Therefore, repeating the argumentation of Step 1, we find
$$\norm{\tilde{v}}_{C([0,T];L^2(\M))}\leq C\norm{\int_0^tG(T-s,\cdot)ds}_{L^2((0,T)\times \M))}=C\norm{\int_{T-t}^TG(s,\cdot)ds}_{L^2((0,T)\times \M))},$$
with $C$ depending on $T$, $M$, $a$, $b$ and $c$. From this last estimate, we deduce easily \eqref{p2a} for the solution $v$ of \eqref{eq2*}.

\end{proof}

\section{Construction of geometric optics solutions}\label{s4}
\setcounter{equation}{0}
In this section, we aim to construct geometric optics solutions  to the equation \eqref{eq1} and its adjoint equation
\eqref{eq1*}. More precisely, we consider solutions $u\in C([0,T];H^1(\M))\cap  C^1([0,T];L^2(\M))$,  with $\partial_\nu u\in L^2((0,T)\times \pa \M)$, of the equation
\begin{equation}\label{wave eq}
\partial ^2_tu-\Delta_\g u+a(t,x) \partial_tu +b(t,x) \partial_t^\alpha u+c(t,x)u=0\quad \textrm{in}\quad (0,T)\times\M,
\end{equation}
with the initial condition
\begin{equation}\label{initial cond u}
u(0,x)=\partial_tu(0,x)=0,\quad x\in\M,
\end{equation}
 of the form
\begin{equation}\label{GO}
u(t,x)=\beta(t,x)\chi_a(t,x)e^{i\lambda \psi(t,x)}+R_\lambda(t,x),
\end{equation}
where $\lambda > 0$ is an arbitrary large parameter  and $R_{\lambda}$ is a term that decays with respect to the parameter $\lambda$. Similarly, we consider solutions $v\in C([0,T];H^1(\M))\cap  C^1([0,T];L^2(\M))$,  with $\partial_\nu v\in L^2((0,T)\times \pa \M)$, of the equation
\begin{equation}\label{wave eq*}
\partial ^2_tv-\Delta_\g v -a(t,x) \partial_tv - \partial_t^{\alpha*}(bv) +c(t,x)v=0\quad \textrm{in}\quad (0,T)\times\M,
\end{equation}
with the final condition
\begin{equation}\label{final cond v}
v(T,x)=\partial_tv(T,x)=0,\quad x\in \M,
\end{equation}
 of the form
\begin{equation}\label{GO*}
v(t,x)=\beta(t,x)\chi_{-a}(t,x)e^{-i\lambda \psi(t,x)}+R_\lambda^*(t,x).
\end{equation}

Let $a,b\in C^3([0,T]\times\M)$,  and $c\in C([0,T]\times\M)$. We assume that there exist $\psi\in C^\infty([0,T]\times\M)$,  $\beta\in C^\infty([0,T]\times M)$ and  $\chi_{a}\in W^{3,\infty}(0,T;L^\infty(\M))\cap W^{1,\infty}(0,T;W^{2,\infty}(\M))$ solving respectively the following equations
\begin{equation}\label{eikonal}
(\partial_t\psi)^2-|\nabla_g\psi|_g=0,\quad  (t,x)\in [0,T]\times\M,
\end{equation}
\begin{equation}\label{transport1}
\p_t \beta\partial_t\psi-\langle d\psi,d\beta \rangle_\g-\frac{1}{2} (\Delta_g \psi)\beta=0,\quad  t\in[0,T],\, x\in\M,
\end{equation}
\begin{equation}\label{transport2}
\p_t \chi_a\partial_t\psi-\langle d\psi,d\chi_a\rangle_\g+\frac{a}{2}\chi_a=0,\quad  t\in[0,T],\, x\in\M.
\end{equation}
In addition,  we impose to $\beta$ the following conditions
\bel{in-fa}\beta(0,x)=\beta(T,x)=0,\quad x\in \M.\ee
We assume also that the remainder terms $R_\lambda$ and $R_\lambda^*$ are lying in $C([0,T];H^1_0(\M))\cap  C^1([0,T];L^2(\M))$,  with $\partial_\nu R_\lambda, \partial_\nu R_\lambda^*\in L^2((0,T)\times \pa \M)$, and they satisfy the decay property
\bel{decay}\norm{R_\lambda}_{C([0,T];L^2(\M))}+\lambda^{-1}\norm{R_\lambda}_{C^1([0,T];L^2(\M))}+\norm{R_\lambda^*}_{C([0,T];L^2(\M))}+\lambda^{-1}\norm{R_\lambda^*}_{C^1([0,T];L^2(\M))}\leq C\lambda^{\alpha-1},\ee
with $C>0$ independent of $\lambda>1$.

\subsection{Principal parts of the geometric optics solutions}\label{section GOS}
We give an explicit representation of the term $\psi$,  $\beta$ and  $\chi_{a}$ as solutions of equations \eqref{eikonal}, \eqref{transport1} and \eqref{transport2}. Following the approach of \cite[Section 3]{FeK}, we  consider construction of these terms adjusted to any arbitrary chosen maximal null geodesic $\tilde{\gamma} \subset  \mathcal D $  whose projection onto the Riemannian manifold $\M$ will be denoted by $\gamma$.

We start by extending the simple manifold $(\M,\g)$ into a simple manifold $(\M_1,\g_1)$ in such a way that $\M$ is contained into the interior of $\M_1$ and $\g_1|_{T\M\times T\M}=\g$. For $x\in \M_1$ and $\theta\in T_x\M_1$, denote by $\gamma_{x,\theta}$ the unique geodesic starting from $x$ and directed by $\theta$, which is defined on the maximal interval $[\tau_-(x,\theta),\tau_+(x,\theta)]$, with $\gamma_{x,\theta}(\tau_\pm(x,\theta))\in\pa\M_1$.  
Recall that the sphere bundle and co-sphere bundle of $\M_1$ are respectively given by
$$
S\M_1=\{(x,\theta)\in T\M_1;\,\abs{\theta}_{\g_1(x)}=1\}, \quad
S^*\M_1=\{(x,p)\in T^*\M_1;\,\abs{p}_{\g_1(x)}=1\}.
$$
Introduce now the submanifolds of inner and outer vectors of $S\M_1$
\begin{equation}\label{2.8}
\p_{\pm}S\M_1 =\{(x,\theta)\in S\M_1,\, x \in \p \M_1,\, \pm\seq{\theta,\textbf{n}(x)}_{\g_1(x)}< 0\},
\end{equation}
where $\textbf{n}$ is the unit outer normal vector field on $\p\M_1$.

%Now pick $y\in \pa \M_1$ and consider the polar normal coordinates $(r,\theta)$ on $\M_1$ given by $x=\exp_y(r\theta)$, where   $\theta\in S_{y}^+\M_1=\{\theta\in T_{y}\M_1,\,\,\abs{\theta}=1,\,\,\seq{\theta,\textbf{n}(y)}_{\g_1(y)}<0\}$ and $r\in(0,\tau_+(y,\theta))$.  According to the Gauss lemma (see e.g. \cite[Chapter 9, Lemma 15]{Sp}), in these coordinates the metric takes the form $\tilde{g}(r,\theta)=dr^2+g_0(r,\theta)$ with $g_0(r,\theta)$ a metric on $S_y\M_1$ that depends smoothly on $r$. We fix also $m(r,\theta)=det(g_0(r,\theta))$, $\theta\in S_{y}^+\M_1$, $r\in(0,\tau_+(y,\theta))$.

 Since $\M \subset \M_1^{int}$, where $\M_1^{int}$ denotes the interior of $\M_1$, we extend $\gamma$ uniquely to a maximal unit-speed geodesic in  the simple manifold $(\M_1,\g_1)$ and we fix a point $y \in \partial \M_1 \cap \gamma$. Then, we consider the polar normal coordinates $(r,\theta)$ on $\M_1$ given by $x=\exp_y(r\theta)$, where   $\theta\in S_{y}^+\M_1=\{\theta\in T_{y}\M_1,\,\,\abs{\theta}=1,\,\,\seq{\theta,\textbf{n}(y)}_{\g_1(y)}<0\}$ and $r\in(0,\tau_+(y,\theta))$.  According to the Gauss lemma (see e.g. \cite[Chapter 9, Lemma 15]{Sp}), in these coordinates the metric takes the form $\tilde{g}(r,\theta)=dr^2+g_0(r,\theta)$, with $g_0(r,\theta)$ a metric on $S_y\M_1$ that depends smoothly on $r$. We fix also $m(r,\theta)=det(g_0(r,\theta))$, $\theta\in S_{y}^+\M_1$, $r\in(0,\tau_+(y,\theta))$. It is well known that there exists a unique inward unit vector $ \theta \in \partial_+S\M_1$ such that $\gamma= \gamma_{y,\theta}(t),$ defined in $\M_1$ on its maximal interval $[0, \tau_+(y, \theta)]$. The extended null geodesic $\tilde{\gamma}_1$ can then be parametrized by  
$$\tilde{\gamma}(t; s, y, \theta) = (s + t, \gamma_{y,\theta}(t)), \quad  t\in [0, \tau_+(y,\theta)],$$
where $s \in \R$ is a constant. Since the construction of the principal part of the geometric optics solutions is local and takes place near this fixed null geodesic, we introduce  coordinates adapted to it. We first consider polar normal coordinates $(t, r,\theta)$ centered at the point $y$ in $\partial\M_1$, as  introduced above. Since $\M_1$ is simple, the angular variable $\theta$ can be identified  with 
global coordinates $(\theta_2,\cdots,\theta_{n})\in\R^{n-1}$ so that the null geodesic $\tilde{\gamma}$ is written as $(s_0+t, t, 0,\cdots, 0)$ with $t \in  [\tau_-, \tau_+]$. 
To simplify the construction, we introduce null Fermi coordinates  around  $\tilde{\gamma}$ in $\R \times \M$,   defined by
\bel{coco}\left\{
\begin{aligned}
&z_0=&&t+r,\\
&z_1=&& -t+r+s_0,\\  
&z_j=&& \theta_{j} \quad \mbox{ for } j\in\{2,\cdots,n\}.
\end{aligned}
\right.\ee
In these coordinates, the null geodesic $\tilde{\gamma}$ is  simply given by $(z_0,0,\cdots,0)$ for $z_0\in[l_-,l_+]$, with $l_-<l_+$. Here $z_0$ is the affine parameter along the geodesic, while all transverse coordinates vanish. Hence, $\tilde{\gamma}$ lies in the $z_0$-axis. In these coordinates the Lorentzian metric $\tilde{\g}=-dt^2+\g=-dt^2+dr^2+\g_0(r,\theta)$  takes the form
\bel{g(z)} \tilde{\g}=dz_0 dz_1+ \sum_{j,k=2}^n \g_{jk}(z)dz_j dz_k.\ee
We define the tubular neighborhood of  $\tilde{\gamma}$, 
$$V_{\tilde{\gamma},\epsilon}=\{z \in [0,T]\times \M;\; z_0\in [l_-,l_+],\; \mid z'\mid <\epsilon \},$$
where $z'=(z_1,\cdots,z_n)$ and $\abs{z'}=\big(\abs{z_1}^2+\cdots +\abs{z_n}^2\big)^{\frac{1}{2}}$. 
The amplitudes of the geometric optics solutions will be chosen to be supported inside  $V_{\tilde{\gamma},\epsilon}$.
Since $\tilde{\gamma} \subset \mathcal{D} $, one can choose $\epsilon >0$  sufficiently small so that $V_{\tilde{\gamma}, \epsilon} \subset \mathcal{D}$. In particular, the  support of the geometric optics solutions remains away from the hypersurfaces $t=0$ and $t=T$, thereby avoiding boundary effects in the solution of the transport equations.\\
We now rewrite the equations \eqref{eikonal}, \eqref{transport1} and \eqref{transport2} in the Fermi coordinates system \eqref{coco}. Owing to the form of the metric \eqref{g(z)}, the eikonal equation \eqref{eikonal}, is solved by  the phase function \bel{psi}\psi(z)=z_1-s_0,\ee
or, equivalently up to an additive constant, $\psi(z)=z_1$ since the eikonal equation is invariant under the addition of constants. With this choice of phase, the transport equations  \eqref{transport1} and \eqref{transport2} reduce to 
\bel{eqeq}\partial_{z_0} \beta+ \frac{(\partial_{z_0} +\partial_{z_1} )m}{8m}\beta=0,\quad \partial_{z_0} \chi_a+\frac{a}{4}\chi_a=0.\ee
Let us choose $\tilde{a}$ an extension of $a\in C^3([0,T]\times\M)$ to $\R\times\M_1$ such that $\tilde{a}\in W^{3,\infty}(\R;L^\infty(\M_1))\cap W^{1,\infty}(\R;W^{2,\infty}(\M_1))$, supp$(\tilde{a})\subset (-\delta,T+\delta)\times \M_1^{int}$. Then, the functions  
\bel{chi}\beta (z)=m(z)^{-\frac{1}{4}}\mathcal G(z') \quad \mbox{ and  } \quad\chi_a(z)=\exp \big({-\frac{1}{4}} \int_{l_-}^{z_0} \tilde{a}(s,z') ds\big)\ee solve respectively the equations \eqref{eqeq}. 
To ensure that the  geometric optics solutions have support inside the tubular neighborhood $V_{\tilde{\gamma},\epsilon}$, we choose a non-negative function $\Psi\in C_0^\infty(\R^{n-1})$ satisfying  $\Psi(z') = 1$ for $|z'|< \frac{1}{4}$, $\Psi(z') = 0$
for $|z'| > \frac{1}{2}$,  and $\norm{\Psi}_{L^2(\R^{n-1})}= 1$. For $0<\epsilon'<\epsilon$, we define \bel{beta}\beta (z)=m(z)^{-\frac{1}{4}}\Psi(\frac{z'}{\epsilon'}).\ee Then $\supp{\beta}\subset V_{\tilde{\gamma},\epsilon}$, so that the corresponding principal part of geometric optics solutions are localized in an arbitrarily small neighborhood of the null geodesic $\tilde{\gamma}$.

\subsection{Remainder term}
In this section, we consider the construction of the remainder terms
$R_\lambda$ (resp. $R_\lambda^*$) in the geometric optics solutions \eqref{GO} (resp. \eqref{GO*}) with the corresponding decay property \eqref{decay} with respect to $\lambda>0$. For this purpose, let us consider $V_\lambda$ (resp. $V_\lambda^*$) be the principal part of the geometric optics solutions \eqref{GO} (resp. \eqref{GO*}) defined by
$$V_\lambda(t,x)=\beta(t,x)\chi_a(t,x)e^{i\lambda \psi(t,x)},\quad V_\lambda^*(t,x)=\beta(t,x)\chi_{-a}(t,x)e^{-i\lambda \psi(t,x)},\quad (t,x)\in(0,T)\times \M,$$
with $\psi$ given by \eqref{psi}, $\chi_a$ by \eqref{chi} and $\beta$ by \eqref{beta}.
Consider also $F_\lambda$ and $F_\lambda^*$ defined by 
$$F_\lambda(t,x)=-(\partial ^2_t-\Delta_\g +a(t,x) \partial_t +b(t,x) \partial_t^\alpha +c(t,x)))V_\lambda(t,x),\quad (t,x)\in(0,T)\times \M,$$
$$F_\lambda^*(t,x)=-(\partial ^2_t-\Delta_\g -a(t,x) \partial_t - \partial_t^{\alpha*} (b\cdot)-\pa_ta(t,x)+c(t,x)))V_\lambda^*(t,x),\quad (t,x)\in(0,T)\times \M.$$
In view of \eqref{eikonal}, \eqref{transport1}, \eqref{transport2}, for all $(t,x)\in(0,T)\times \M$, we obtain
\bel{F}F_\lambda(t,x)=-e^{i\lambda \psi(t,x)}(\partial ^2_t-\Delta_\g +a(t,x) \partial_t + c(t,x))\beta\chi_{a}(t,x)-b(t,x) \partial_t^\alpha V_\lambda(t,x),\ee
\bel{F*}F_\lambda^*(t,x)=-e^{-i\lambda \psi(t,x)}(\partial ^2_t-\Delta_\g -a(t,x) \partial_t  +(-\pa_ta+c)(t,x))\beta\chi_{-a}(t,x)+ \partial_t^{\alpha*} (bV_\lambda^*)(t,x).\ee 
Using the above property we can define $R_\lambda$ and $R_\lambda^*$ as the solution of the following IBVP
\bel{R}
\left\{
\begin{aligned}
& \partial_t^2 R_\lambda-\Delta_\g R_\lambda +a(t,x)\partial_tR_\lambda+ b(t,x) \partial_t^{\alpha} R_\lambda +c(t,x)R_\lambda  = F_\lambda(t,x), &&\quad (t,x)\in (0,T)\times \M,\\
& R_\lambda(t,x) = 0,                                                                                                      &&\quad (t,x)\in (0,T)\times\pa\M,\\
& R_\lambda(0,x) = 0,\quad \partial_t R_\lambda(0,x) = 0,                                             &&\quad x\in\M,
\end{aligned}
\right.
\ee
\begin{equation}
\label{R*}
\left\{\begin{aligned}
&\partial_t^2 R_\lambda^*-\Delta_\g R_\lambda^* -a\partial_tR_\lambda^*- \partial_t^{\alpha*}(bR_\lambda^*) +(-\pa_t a+c)R_\lambda^*  = F_\lambda^*, &&\quad \textrm{in } (0,T)\times \M,\\
& R_\lambda^*(t,x) = 0,                                                                                                      &&\quad (t,x)\in (0,T)\times\pa\M,\\
& R_\lambda^*(T,x) = 0,\quad \partial_t R_\lambda^*(T,x) = 0,                                             &&\quad x\in\M.
\end{aligned}
\right.
\end{equation}
Using this representation, we can prove the following properties of the remainder terms $R_\lambda$, $R_\lambda^*$.
\begin{prop}\label{p3} Problem \eqref{R} $($resp. \eqref{R*}$)$ admits a unique solution $R_\lambda\in  C([0,T];H^1_0(\M))\cap  C^1([0,T];L^2(\M))$ $($resp. $R_\lambda^*\in  C([0,T];H^1_0(\M))\cap  C^1([0,T];L^2(\M))$$)$ with $\partial_\nu R_\lambda\in L^2((0,T)\times\pa\M)$ $($resp. $\partial_\nu R_\lambda^*\in L^2((0,T)\times\pa\M)$$)$. Moreover, 
   $R_\lambda$ and $R_\lambda^*$ satisfy the decay property \eqref{decay} with respect to $\lambda>1$.
\end{prop}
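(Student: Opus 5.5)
Existence, uniqueness and the regularity $R_\lambda\in C([0,T];H^1_0(\M))\cap C^1([0,T];L^2(\M))$ with $\partial_\nu R_\lambda\in L^2((0,T)\times\pa\M)$ will follow from Lemma \ref{l1} (resp. Lemma \ref{l3} for $R_\lambda^*$), once we check $F_\lambda,F_\lambda^*\in L^1(0,T;L^2(\M))$. The normal derivative comes from rewriting the equation as $\partial_t^2R_\lambda-\Delta_\g R_\lambda=F_\lambda-a\partial_tR_\lambda-b\partial_t^\alpha R_\lambda-cR_\lambda$ and applying \cite[Theorem 2.30]{KKL}, exactly as in Proposition \ref{p1}. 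The integrability of $F_\lambda$ is clear from \eqref{F}: $\beta\chi_a$ is smooth enough and compactly supported in $V_{\tilde\gamma,\epsilon}\subset\mathcal D$, and $\partial_t^\alpha V_\lambda$ is bounded since $V_\lambda$ is $C^1$ in time and vanishes near $t=0$.

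The decay \eqref{decay} rests on Proposition \ref{p2}, which bounds $\norm{R_\lambda}_{C([0,T];L^2(\M))}$ by $\norm{\int_0^tF_\lambda(s)ds}_{L^2}$. I split $F_\lambda$ into two parts.

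\textbf{First part.} The term $-e^{i\lambda\psi}(\partial_t^2-\Delta_\g+a\partial_t+c)(\beta\chi_a)$ is a $\lambda$-independent smooth, compactly supported function times $e^{i\lambda\psi}$. The phase satisfies $\partial_t\psi=-1$ in the coordinates \eqref{coco}, since $\psi=z_1-s_0=-t+r$, so $\partial_t\psi$ never vanishes. Writing $e^{i\lambda\psi}=-(i\lambda)^{-1}\partial_t e^{i\lambda\psi}$ and integrating by parts in $s$ over $(0,t)$ gives $\norm{\int_0^t(\cdot)ds}_{L^2}\le C\lambda^{-1}$. This requires $\beta\chi_a\in W^{3,\infty}$ in time, which is the assumed regularity of $\chi_a$.

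\textbf{Second part.} For $b\partial_t^\alpha V_\lambda$ I expect the main obstacle, because $\partial_t^\alpha$ of $e^{i\lambda\psi}$ produces a factor of size $\lambda^\alpha$. I would first check that $\int_0^t\partial_s^\alpha V_\lambda\,ds=I^{1-\alpha}V_\lambda(t)$, the Riemann–Liouville integral of order $1-\alpha$, using $V_\lambda(0)=0$ and the Fubini computation already carried out in Lemma \ref{l1}. Then integrate by parts once more to move $b$ outside: $\int_0^tb\,\partial_s^\alpha V_\lambda ds=b(t)I^{1-\alpha}V_\lambda(t)-\int_0^t\partial_sb\,I^{1-\alpha}V_\lambda ds$. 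It therefore suffices to prove $\norm{I^{1-\alpha}V_\lambda}_{L^\infty(0,T;L^2)}\le C\lambda^{\alpha-1}$.

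To get this bound, write $I^{1-\alpha}V_\lambda(t)=\Gamma(1-\alpha)^{-1}\int_0^t\tau^{-\alpha}V_\lambda(t-\tau)d\tau$ and split at $\tau=\lambda^{-1}$. The piece over $(0,\lambda^{-1})$ is bounded by $C\lambda^{\alpha-1}$ directly. On $(\lambda^{-1},t)$, pointwise in $x$, $V_\lambda(t-\tau)=-(i\lambda\partial_t\psi)^{-1}\partial_\tau[e^{i\lambda\psi(t-\tau)}]\beta\chi_a$ up to sign. Integrating by parts in $\tau$ gives boundary terms of size $\lambda^{-1}(\lambda^{-1})^{-\alpha}=\lambda^{\alpha-1}$, and integral terms bounded by $\lambda^{-1}\int_{\lambda^{-1}}^T(\tau^{-\alpha-1}+\tau^{-\alpha})d\tau\le C\lambda^{\alpha-1}$. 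This yields $\norm{R_\lambda}_{C([0,T];L^2)}\le C\lambda^{\alpha-1}$.

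\textbf{Derivative bound.} For $\lambda^{-1}\norm{R_\lambda}_{C^1([0,T];L^2)}$, I would apply the energy estimate \eqref{esta} directly: $\norm{F_\lambda}_{L^1(0,T;L^2)}\le C\lambda^\alpha$, since the first part is $O(1)$ and $|\partial_t^\alpha V_\lambda|\le C\lambda^\alpha$ by the same splitting without the extra integration. Hence $\lambda^{-1}\norm{R_\lambda}_{C^1}\le C\lambda^{\alpha-1}$.

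\textbf{The adjoint.} For $R_\lambda^*$, time reversal through Lemma \ref{l2} converts $\partial_t^{\alpha*}(bV_\lambda^*)$ into a forward Caputo derivative of $\tilde b\tilde V_\lambda^*$. The same argument then applies, using the second estimate in Proposition \ref{p2} and Lemma \ref{l3}.
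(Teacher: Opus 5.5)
Your proposal is correct and follows essentially the same route as the paper's proof. The shared steps are:
\begin{itemize}
\item regularity from Lemmas \ref{l1}, \ref{l3} together with \cite[Theorem 2.30]{KKL};
\item the $C([0,T];L^2(\M))$ bound from Proposition \ref{p2}, using integration by parts against $e^{i\lambda\psi}$ with $\partial_t\psi=-1$ and the estimate $\norm{I^{1-\alpha}_tV_\lambda}\leq C\lambda^{\alpha-1}$ obtained by splitting at $\tau=\lambda^{-1}$;
\item the $C^1$ bound from \eqref{esta} with $\norm{F_\lambda}_{L^1(0,T;L^2(\M))}\leq C\lambda^{\alpha}$;
\item time reversal via Lemma \ref{l2} for $R_\lambda^*$.
\end{itemize}
One caveat applies equally to the paper: the integration by parts on the first part of $F_\lambda$ also needs $\partial_t c\in L^\infty$. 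This holds in the application, where $c_j\in C^3$, but not under the bare assumption $c\in C([0,T]\times\M)$ stated at the start of Section \ref{s4}.
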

\begin{proof} We divide the proof into three steps.

\textbf{Step 1.} In this step, we show the regularity properties of $R_\lambda$ and $R_\lambda^*$. Since $F_\lambda, F_\lambda^*\in L^1(0,T;L^2(\M))$, applying Lemma \ref{l1} and \ref{l3}, we deduce $R_\lambda,R_\lambda^*\in  C([0,T];H^1_0(\M))\cap  C^1([0,T];L^2(\M))$. Fixing 
$$G=-(a\partial_tR_\lambda+ b \partial_t^{\alpha} R_\lambda +cR_\lambda),\quad G^*=-(-a\partial_tR_\lambda^*- \partial_t^{\alpha*}(bR_\lambda^*) +(-\pa_ta+c)R_\lambda^*),$$
we observe that $G,G^*\in L^1(0,T;L^2(\M))$ and $R_\lambda$, $R_\lambda^*$ solve respectively the following problems
$$\left\{
\begin{aligned}
& \partial_t^2 R_\lambda-\Delta_\g R_\lambda  = G+F_\lambda(t,x), &&\quad (t,x)\in (0,T)\times \M,\\
& R_\lambda(t,x) = 0,                                                                                                      &&\quad (t,x)\in (0,T)\times\pa\M,\\
& R_\lambda(0,x) = 0,\quad \partial_t R_\lambda(0,x) = 0,                                             &&\quad x\in\M,
\end{aligned}
\right.$$
$$\left\{
\begin{aligned}
& \partial_t^2 R_\lambda^*-\Delta_\g R_\lambda^*  = G^*+F_\lambda^*(t,x), &&\quad (t,x)\in (0,T)\times \M,\\
& R_\lambda^*(t,x) = 0,                                                                                                      &&\quad (t,x)\in (0,T)\times\pa\M,\\
& R_\lambda^*(T,x) = 0,\quad \partial_t R_\lambda^*(T,x) = 0,                                             &&\quad x\in\M.
\end{aligned}
\right.$$
Thus applying \cite[Theorem 2.30]{KKL}, we deduce that $\partial_\nu R_\lambda,\partial_\nu R_\lambda^*\in L^2((0,T)\times\pa\M)$. 

\textbf{Step 2.} In this step, we prove the following estimate
\bel{p3b}\norm{\int_0^tF_\lambda(s,\cdot)ds}_{L^2((0,T)\times\M))}+\norm{\int_{T-t}^TF_\lambda^*(s,\cdot)ds}_{L^2((0,T)\times\M))}\leq C\lambda^{\alpha-1},\ee
with $C>0$ independent of $\lambda>1$. Let us first observe that since $\beta,\chi_{a}\in W^{3,\infty}(0,T;L^\infty(\M))\cap W^{1,\infty}(0,T;W^{2,\infty}(\M))$, we have $(\partial ^2_t-\Delta_\g +a(t,x) \partial_t + c(t,x))\beta\chi_{a}\in W^{1,\infty}(0,T;L^\infty(\M))$. In addition, we have $\partial_t\psi(t,x)=-1$, $(t,x)\in(0,T)\times\M$, and integrating by parts, we get
$$\begin{aligned}&\int_0^te^{i\lambda \psi(s,x)}(\partial ^2_t-\Delta_\g +a \partial_t + c)\beta\chi_{a}(s,x)ds\\
&=\frac{e^{i\lambda \psi(t,x)}(\partial ^2_t-\Delta_\g +a \partial_t + c)\beta\chi_{a}(t,x)}{-i\lambda}+\frac{1}{i\lambda}\int_0^te^{i\lambda \psi(s,x)}\partial_t(\partial ^2_t-\Delta_\g +a \partial_t + c)\beta\chi_{a}(s,x)ds.\end{aligned}$$
This clearly proves that
$$\sup_{t\in[0,T]}\norm{\int_0^te^{i\lambda \psi(s,x)}(\partial ^2_t-\Delta_\g +a \partial_t + c)\beta\chi_{a}(s,\cdot)ds}_{L^2(M)}\leq C\lambda^{-1},$$
with $C>0$ independent of $\lambda>1$. Similarly, we can show that
$$\sup_{t\in[0,T]}\norm{\int_{T-t}^Te^{-i\lambda \psi(s,x)}(\partial ^2_t-\Delta_\g -a(s,x) \partial_t  +(-\pa_ta+c)(s,x))\beta\chi_{-a}(s,\cdot)ds}_{L^2(M)}\leq C\lambda^{-1},$$
and \eqref{p3b} will follow from the following estimate
\bel{p3c} \norm{\int_0^tb\partial_t^\alpha V_\lambda(s,\cdot)ds}_{L^2((0,T)\times\M))} +\norm{\int_{T-t}^T\partial_t^{\alpha*} (bV_\lambda^*)(s,\cdot)ds}_{L^2((0,T)\times\M))}\leq C\lambda^{\alpha-1},  \ee
with $C>0$ independent of $\lambda>1$. 

We start by considering \eqref{p3c} for $V_\lambda$. From now on, for all $\gamma\in(0,1]$, we denote by $I_t^\gamma$ the fractional anti-derivative of order $\gamma$ in $t$ defined by 
\bel{I}I_t^\gamma w(t,x)=\int_0^t\frac{(t-s)^{\gamma-1}}{\Gamma(\gamma)}w(s,x)ds,\quad w\in L^1((0,T)\times\M),\ (t,x)\in(0,T)\times\M.\ee
Since, $V_\lambda(0,\cdot)\equiv0$, one can check that $\partial_t^\alpha V_\lambda=\partial_tI^{1-\alpha}_tV_\lambda$ which implies that
$$\begin{aligned}\abs{\int_0^tb\partial_t^\alpha V_\lambda(s,x)ds}&=\abs{b(t,x)I^{1-\alpha}_tV_\lambda(t,x)-\int_0^t\partial_tbI^{1-\alpha}_tV_\lambda(t,x)ds}\\
&\leq C\norm{I^{1-\alpha}_tV_\lambda}_{L^\infty(0,T;L^\infty(\M))},\quad (t,x)\in(0,T)\times\M,\end{aligned}$$
with $C>0$ independent of $\lambda>1$. This proves that \eqref{p3c} follows from the estimate
\bel{p3d}\norm{I^{1-\alpha}_tV_\lambda}_{L^\infty(0,T;L^\infty(\M))}\leq C\lambda^{\alpha-1},\ee
with $C>0$ independent of $\lambda>1$. Note first that, for all $t\in(0,\min(\lambda^{-1},T))$ and all $x\in\M$, we have
$$I^{1-\alpha}_tV_\lambda(t,x)=\int_0^{t}\frac{(t-s)^{-\alpha}}{\Gamma(1-\alpha)}V_\la(s,x)ds=\int_0^{t}\frac{s^{-\alpha}}{\Gamma(1-\alpha)}V_\la(t-s,x)ds$$
which implies that
\bel{p3e}\norm{I^{1-\alpha}_tV_\lambda}_{L^\infty(0,\min(\lambda^{-1},T);L^\infty(\M))}\leq C\int_0^{\lambda^{-1}}s^{-\alpha}ds=C\lambda^{\alpha-1},\ee
with $C>0$ independent of $\lambda>1$. Similarly, for $\lambda>T^{-1}$ and all $t\in(\lambda^{-1},T)$ and all $x\in\M$, we have
$$\begin{aligned}\abs{I^{1-\alpha}_tV_\lambda(t,x)}&\leq \abs{\int_0^{\lambda^{-1}}\frac{s^{-\alpha}}{\Gamma(1-\alpha)}V_\la(t-s,x)ds}+\abs{\int_{\lambda^{-1}}^t\frac{s^{-\alpha}}{\Gamma(1-\alpha)}V_\la(t-s,x)ds}\\
&\leq C\lambda^{\alpha-1}+\abs{e^{i\lambda \psi(t,x)}\int_{\lambda^{-1}}^te^{-i\lambda s}\frac{s^{-\alpha}}{\Gamma(1-\alpha)}\beta\chi_a(t-s,x)ds}\\
&\leq C\lambda^{\alpha-1}+\abs{\frac{e^{-i}}{i\lambda}\cdot\frac{\lambda^{\alpha}}{\Gamma(1-\alpha)}\cdot\beta\chi_a(t-\lambda^{-1},x) +\frac{1}{i\lambda}\int_{\lambda^{-1}}^te^{-i\lambda s}\partial_s\left(\frac{s^{-\alpha}}{\Gamma(1-\alpha)}\beta\chi_a(t-s,x)\right)ds}\\
&\leq C\left(\lambda^{\alpha-1}+\lambda^{-1}\left(\int_{\lambda^{-1}}^t(s^{-1-\alpha}+s^{-\alpha})ds\right)\right)\leq C\lambda^{\alpha-1},
\end{aligned}$$
with $C>0$ independent of $\lambda>1$. Combining this with \eqref{p3e}, we obtain
$$\norm{I^{1-\alpha}_tV_\lambda}_{L^\infty(0,T;L^\infty(\M))}\leq C\lambda^{\alpha-1},$$
which implies that
\bel{p3f}\norm{\int_0^tb\partial_t^\alpha V_\lambda(s,\cdot)ds}_{L^2((0,T)\times\M))}\leq C\lambda^{\alpha-1},\ee
with $C>0$ independent of $\lambda>1$. Similarly,   applying Lemma \ref{l2}, we find
$$\partial_t^{\alpha*} (bV_\lambda^*)(t,x)=\partial_t^{\alpha}H_\lambda(T-t,x),\quad (t,x)\in(0,T)\times\M,$$
with
$$H_\lambda(t,x)=bV_\lambda^*(T-t,x)=b(T-t,x)\beta(T-t,x)\chi_{-a}(T-t,x)e^{-i\lambda \psi(T-t,x)},\quad (t,x)\in(0,T)\times\M.$$
Thus, recalling that $V_\lambda^*(T,\cdot)\equiv0$ we deduce that $H_\lambda(0,\cdot)\equiv0$ and, repeating the above argumentation, we deduce that
$$\begin{aligned}\norm{\int_{T-t}^T\partial_t^{\alpha*} (bV_\lambda^*)(s,\cdot)ds}_{L^2((0,T)\times\M))}&=\norm{\int_{T-t}^T\partial_t^{\alpha}H_\lambda(T-s,x)ds}_{L^2((0,T)\times\M))}\\ 
&=\norm{\int_0^{t}\partial_t^{\alpha}H_\lambda(s,x)ds}_{L^2((0,T)\times\M))}\leq C\lambda^{\alpha-1},\end{aligned}$$
with $C>0$ independent of $\lambda>1$. Combining this estimate with \eqref{p3f}, we obtain \eqref{p3c} and by the same way \eqref{p3b}.

\textbf{Step 3.} In this step, we complete the proof of the proposition. Applying Lemma \ref{l1}, \ref{l3} as well as formula \eqref{F}-\eqref{F*}, we obtain
\bel{p3g}\begin{aligned}
\norm{R_\lambda}_{C^1([0,T];L^2(\M))}+\norm{R_\lambda^*}_{C^1([0,T];L^2(\M))}
&\leq C(\norm{F_\lambda}_{L^1(0,T;L^2(\M))}+\norm{F_\lambda^*}_{L^1(0,T;L^2(\M))})\\
&\leq C\left(1+\norm{\partial_t^\alpha V_\lambda}_{L^1(0,T;L^2(\M))}+\norm{\partial_t^{\alpha*} (bV_\lambda^*)}_{L^1(0,T;L^2(\M))}\right),\end{aligned}\ee
with $C>0$ independent of $\lambda>1$. Recalling that $\partial_t^\alpha V_\lambda=I^{1-\alpha}_t\partial_tV_\lambda$, $$\partial_tV_\lambda(t,x)=-i\lambda V_\lambda(t,x)+e^{i\lambda \psi(t,x)}\partial_t(\beta\chi_a)(t,x),\quad (t,x)\in(0,T)\times\M$$
and repeating the argumentation of Step 2., we obtain
$$\norm{\partial_t^\alpha V_\lambda}_{L^1(0,T;L^2(\M))}\leq C\norm{I^{1-\alpha}_t\partial_tV_\lambda}_{L^\infty(0,T;L^\infty(\M))}\leq C\lambda^\alpha.$$
Similarly, we obtain
$$\norm{\partial_t^{\alpha*} (bV_\lambda^*)}_{L^1(0,T;L^2(\M))}\leq C\lambda^{\alpha}$$
and, using \eqref{p3g}, we find
\bel{p3h}
\norm{R_\lambda}_{C^1([0,T];L^2(\M))}+\norm{R_\lambda^*}_{C^1([0,T];L^2(\M))}\leq 
C\lambda^\alpha,\ee
with $C>0$ independent of $\lambda>1$. Finally, in view of Proposition \ref{p2} and estimate \eqref{p3b}, we have
$$\begin{aligned}&\norm{R_\lambda}_{C([0,T];L^2(\M))}+\norm{R_\lambda^*}_{C([0,T];L^2(\M))}\\
&\leq C\left(\norm{\int_0^tF_\lambda(s,\cdot)ds}_{L^2((0,T)\times\M))}+\norm{\int_{T-t}^TF_\lambda^*(s,\cdot)ds}_{L^2((0,T)\times\M))}\right)\leq C\lambda^{\alpha-1}.\end{aligned}$$
Combining this estimate with \eqref{p3h}, we obtain the estimates \eqref{decay}.\end{proof}
\subsection{Geometric optics solutions for our main results}\label{GON}

In this section, we consider the geometric optics solutions required for our main results. More precisely, we choose $a_j,c_j,b \in C^3([0,T]\times \M)$, $j=1,2$, and we assume that condition \eqref{t1b} is fulfilled. We assume also that either \eqref{t2a} is fulfilled or $a_j,c_j$ are time independent. We will consider geometric optics $u_{2,\lambda}$ of the form \eqref{GO} and $u_{1,\lambda}^*$ of the form \eqref{GO*} solving the following problem
\begin{equation}
\label{eq-GO}
\left\{
\begin{aligned}
& \partial_t^2 u_{2,\lambda}-\Delta_\g u_{2,\lambda} +a_2(t,x)\partial_tu_{2,\lambda}+ b(t,x) \partial_t^{\alpha} u_{2,\lambda} +c_2(t,x)u_{2,\lambda}  = 0, &&\quad (t,x)\in (0,T)\times \M,\\
& u_{2,\lambda}(0,x) = 0,\quad \partial_t u_{2,\lambda}(0,x) = 0,                                             &&\quad x\in\M,
\end{aligned}
\right.
\end{equation}
\begin{equation}
\label{eq-GO*}
\left\{
\begin{aligned}
& \partial_t^2 u_{1,\lambda}^*-\Delta_\g u_{1,\lambda}^* -a_1(t,x)\partial_tu_{1,\lambda}^*-  \partial_t^{\alpha*} (bu_{1,\lambda}^*) +(-\pa_ta_1(t,x)+c_1(t,x))u_{1,\lambda}^*  = 0, &&\quad (t,x)\in (0,T)\times \M,\\
& u_{1,\lambda}^*(T,x) = 0,\quad \partial_t u_{1,\lambda}^*(T,x) = 0,                                             &&\quad x\in\M.
\end{aligned}
\right.
\end{equation}
For this purpose, applying \cite[Section 3, Theorem 5]{St} in the case of a smooth Riemanian manifold, we can find $\tilde{a}_1 \in W^{3,\infty}(\R;L^\infty(\M_1))\cap W^{1,\infty}(\R;W^{2,\infty}(\M_1))$ such that supp$(\tilde{a_1})\subset (-\delta,T+\delta)\times \M_1^{int}$ and $\tilde{a}_1=a_1$ on $[0,T]\times\M$. 
Next, we put
$$ \tilde{a}_2(t,x) :=
\left\{
\begin{array}{ll} 
a_2(t,x), & \mbox{if}\ (t,x)\in[0,T]\times M, \\ \tilde{a}_1(t,x), & \mbox{if}\ (t,x) \in \R\times\M_1\setminus ([0,T]\times M) . 
\end{array}\right.$$
Then, according to \eqref{t1a} and \eqref{t1b}, we have
$\tilde{a}_2 \in W^{3,\infty}(\R;L^\infty(\M_1))\cap W^{1,\infty}(\R;W^{2,\infty}(\M_1))$. In addition, choosing $a=a_1-a_2$ extended by zero to $\R\times\M_1$ we have $a=\tilde{a}_1-\tilde{a}_2$.

Using these properties as well as the results of the preceding section,  to every maximal null geodesic $\tilde{\gamma} \subset  \mathcal D $ we can associate geometric optics solutions  of \eqref{eq-GO}-\eqref{eq-GO*} taking the form
\bel{GO1}u_{2,\lambda}(t,x)=\beta(t,x)\chi_{a_2}(t,x)e^{i\lambda \psi(t,x)}+R_\lambda(t,x),\quad 
u_{1,\lambda}^*(t,x)=\beta(t,x)\chi_{-a_1}(t,x)e^{-i\lambda \psi(t,x)}+R_\lambda^*(t,x),\ee
where $\psi$ is given by \eqref{psi}, $\beta$ is given by \eqref{beta}, $\chi_{\pm a_j}$, $j=1,2$, is given by \eqref{chi} with $\tilde{a}=\tilde{a}_j$ and the remainder terms $R_\lambda,R_\lambda^*$ solving respectively \eqref{R}, with $a=a_2$ and $c=c_2$, and \eqref{R*}, with $a=a_1$ and $c=c_1$, and satisfying the properties stated in Proposition \ref{p3}.

\section{Proof the main results}\label{s5}
This section will be devoted to the proof of the main results stated in Theorem \ref{t1}, \ref{t2} and \ref{t3}. For this purpose, we start by considering a reduction of the data which relies on the memory effect of equation \eqref{eq1} arising from the presence of the fractional attenuation.

\subsection{Transfer of data}\label{s5.1}

The main goal of this subsection is to show the fundamental result of transfer of the data from the partial hyperbolic Dirichlet-to-Neumann map $\Lambda_{a,c,\epsilon}$, with $\epsilon\in(0,T)$ arbitrary small, and the  hyperbolic Dirichlet-to-Neumann $\mathcal N_{a,c}$ defined on the full lateral boundary. This result can be stated as follows.

\begin{lem}\label{l4} For $j=1,2$, let $a_j,b,c_j \in C^3([0,T]\times \M)$ and assume that there exists $\epsilon_0\in(0,T)$ such that condition \eqref{t1c} and the following  
\bel{l4a} \partial_\nu^ka_1(t,x)=\partial_\nu^ka_2(t,x),\quad \partial_\nu^kc_1(t,x)=\partial_\nu^kc_2(t,x),\quad   k=0,1,\ (t,x)\in(T-\epsilon_0,T)\times\pa\M,\ee
are fulfilled. Then, for any $\epsilon\in(0,\epsilon_0)$, the following implication 
\bel{l4b}\Lambda_{a_1,c_1,\epsilon}=\Lambda_{a_2,c_2,\epsilon}\Rightarrow\mathcal N_{a_1,c_1}=\mathcal N_{a_2,c_2}\ee
holds true.
    
\end{lem}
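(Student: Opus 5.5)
Fix $f\in\mathcal J$ and let $u_j$, $j=1,2$, be the solutions of \eqref{eq1} with coefficients $(a_j,b,c_j)$ and Dirichlet datum $f$; by Theorem \ref{t4}, $u_j\in C([0,T];H^4(\M))\cap C^2([0,T];H^2(\M))$. Set $w=u_1-u_2$. The plan is to show that $\partial_\nu w$ vanishes on the whole of $(0,T)\times\pa\M$. Since $\mathcal J$ is dense in $H^1_0((0,T)\times\pa\M)$, estimate \eqref{p1a} of Proposition \ref{p1} then gives $\mathcal N_{a_1,c_1}=\mathcal N_{a_2,c_2}$.

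First I extract the boundary information available on $(T-\epsilon,T)$. There we know $w|_{\pa\M}=0$, $\partial_\nu w=0$ and $\partial_\nu\Delta_\g w=0$. Tangential derivatives of $w$ vanish on $\pa\M$, and so do time derivatives of $w$ and of $\partial_\nu w$. Subtracting the two equations gives
$$\partial_t^2 w-\Delta_\g w+a_1\partial_t w+b\partial_t^\alpha w+c_1 w=(a_2-a_1)\partial_t u_2+(c_2-c_1)u_2 .$$
By \eqref{l4a} the right-hand side vanishes on $(T-\epsilon,T)\times\pa\M$, and so does its normal derivative, since $u_2$ and $\partial_\nu u_2$ are well-defined traces. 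Restrict this identity to the boundary. All terms except $b\,\partial_t^\alpha w$ vanish: $\partial_t^2w=0$ because $w|_{\pa\M}=0$, $\Delta_\g w|_{\pa\M}=0$ by the Neumann measurement of the normal part, and the remaining lower-order terms vanish as well. This gives $b\,\partial_t^\alpha w=0$ there. Applying $\partial_\nu$ to the identity similarly gives $b\,\partial_t^\alpha\partial_\nu w+(\partial_\nu b)\partial_t^\alpha w=0$. Now use \eqref{t1c}, $|b|>0$ near $t=T$, and the vanishing trace of $w$.

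Writing $\Delta_\g$ in boundary normal coordinates, $\Delta_\g w|_{\pa\M}=\partial_\nu^2 w+H\partial_\nu w+\Delta_{\pa\M}w$. Hence $\Delta_\g w|_{\pa\M}=\partial_\nu^2 w|_{\pa\M}$ on $(T-\epsilon,T)$, and I need to control it. This is exactly why $\partial_\nu\Delta_\g u$ is measured: it lets us handle the normal derivative of the equation.

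The conclusion of this step is that, for each $x\in\pa\M$, the functions $h_0(t)=w(t,x)$ and $h_1(t)=\partial_\nu w(t,x)$ satisfy $h_k(0)=0$ and
$$\partial_t^\alpha h_k(t)=0,\qquad t\in(T-\epsilon,T).$$
For $h_0$ this is trivial, since $h_0\equiv0$ for all $t$ ($w=0$ on the whole boundary, both solutions having datum $f$). For $h_1$, the relation above with $\partial_t^\alpha w|_{\pa\M}=0$ and $b\neq0$ gives $\partial_t^\alpha\partial_\nu w(\cdot,x)=0$ on $(T-\epsilon,T)$.

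The key step is a memory property of the Caputo derivative, namely Theorem \ref{memory} in the appendix: if $h\in W^{1,1}(0,T)$ with $h(0)=0$, $h=0$ on $(T-\epsilon,T)$, and $\partial_t^\alpha h=0$ on $(T-\epsilon,T)$, then $h\equiv0$ on $(0,T)$. I would prove it as follows. For $t\in(T-\epsilon,T)$,
$$0=\partial_t^\alpha h(t)=\frac{1}{\Gamma(1-\alpha)}\int_0^{T-\epsilon}(t-s)^{-\alpha}h'(s)\,ds .$$
The right-hand side is real-analytic in $t>T-\epsilon$, so it vanishes for all $t>T-\epsilon$. Integrating by parts turns this into a Stieltjes-type transform of $h$ on $(0,T-\epsilon)$ vanishing identically. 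Differentiating repeatedly in $t$, or using a Laplace or Mellin transform argument in complex analysis, shows that all moments of $h'$ against $(t-s)^{-\alpha-k}$ vanish, so $h'=0$ and therefore $h=0$ by $h(0)=0$.

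Applying this to $h_1(t)=\partial_\nu w(t,x)$ gives $\partial_\nu w=0$ on $(0,T)\times\pa\M$, hence $\Lambda_{a_1,c_1}$-type equality on the full interval, i.e.\ $\mathcal N_{a_1,c_1}f=\mathcal N_{a_2,c_2}f$.

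I expect two obstacles. The first is justifying the analytic continuation and uniqueness of the Stieltjes transform with only $W^{1,1}$ regularity. The second is carefully checking that the boundary traces of the equation really reduce to $b\,\partial_t^\alpha\partial_\nu w=0$, using the $\partial_\nu\Delta_\g$ data.
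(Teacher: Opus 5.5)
Your proposal is correct and follows the paper's proof. You differentiate the equation for $w=u_1-u_2$ in the normal direction on $(T-\epsilon,T)\times\pa\M$, use $\partial_\nu w=\partial_\nu\Delta_\g w=0$ there, \eqref{l4a} for the right-hand side and $w|_{(0,T)\times\pa\M}\equiv0$ to reduce to $b\,\partial_t^\alpha\partial_\nu w=0$, then apply \eqref{t1c} and Theorem~\ref{memory}, and conclude by density (the dense set should be $\mathcal J\cap H^1_0((0,T)\times\pa\M)$, not $\mathcal J$ itself). Your paragraph restricting the undifferentiated equation to $\pa\M$ is superfluous, and it wrongly credits $\Delta_\g w|_{\pa\M}=0$ to the measurement when it follows from the equation itself, but nothing later depends on it.
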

\begin{proof} Fix $\epsilon\in(0,\epsilon_0)$ and assume that 
\bel{l4c} \Lambda_{a_1,c_1,\epsilon}=\Lambda_{a_2,c_2,\epsilon}.\ee
Now choose $f\in\mathcal J\cap H^1_0((0,T)\times\pa\M)$ and, applying Theorem \ref{t4},  consider $u_j\in C([0,T];H^2(\M))\cap C^2([0,T];H^4(\M))$, $j=1,2$, solving \eqref{eq1} with $a=a_j$ and $c=c_j$. Then, in view of \eqref{l4c}, we have
\bel{l4d} \partial_\nu u_1(t,x)=\partial_\nu u_2(t,x),\quad \partial_\nu\Delta_\g u_1(t,x)=\partial_\nu\Delta_\g u_2(t,x),\quad (t,x)\in[T-\epsilon,T]\times\pa\M.\ee
Fix $u=u_1-u_2$ and observe that $u\in C([0,T];H^4(\M))\cap C^2([0,T];H^2(\M))$
solves the problem
$$
\left\{
\begin{aligned}
& \partial_t^2 u-\Delta_\g u +a_1(t,x)\partial_tu+ b(t,x) \partial_t^{\alpha} u +c_1(t,x)u  =(a_2-a_1) \partial_tu_2+(c_2-c_1) u_2, &&\quad (t,x)\in (0,T)\times \M,\\
& u(t,x) = 0,                                                                                                      &&\quad (t,x)\in (0,T)\times\pa\M,\\
& u(0,x) = 0,\quad \partial_t u(0,x) = 0,                                             &&\quad x\in\M.
\end{aligned}
\right.$$
It follows that 
$$\begin{aligned}&(\partial_t^2\pa_\nu u-\pa_\nu\Delta_\g u+\pa_\nu a_1\partial_tu+a_1\partial_t\pa_\nu u+\pa_\nu b\partial_t^{\alpha} u+b\partial_t^{\alpha} \pa_\nu u+\pa_\nu c_1u+c_1\pa_\nu u)(t,x)\\
&=\pa_\nu (\partial_t^2 u-\Delta_\g u +a_1\partial_tu+ b \partial_t^{\alpha} u +c_1u )(t,x)\\
&=(\pa_\nu(a_2-a_1)\pa_tf+(a_2-a_1)\pa_\nu \pa_tu_2+\pa_\nu(c_2-c_1)f+(c_2-c_1)\pa_\nu u_2)(t,x),\quad (t,x)\in(T-\epsilon,T)\times\pa\M.\end{aligned}$$
Applying \eqref{l4a}, \eqref{l4d} and using the fact that $u(t,x)=0$, $(t,x)\in(0,T)\times\pa\M$, we obtain
$$b(t,x)\partial_t^{\alpha} \pa_\nu u(t,x)=0,\quad (t,x)\in(T-\epsilon,T)\times\pa\M.$$
Then, condition \eqref{t1c} implies
$$\pa_\nu u(t,x)=\partial_t^{\alpha} \pa_\nu u(t,x)=0,\quad (t,x)\in(T-\epsilon,T)\times\pa\M.$$
Applying Theorem \ref{memory} in the Appendix (see also  \cite[Theorem 1]{JK}) and \eqref{RC}, we obtain
$$\pa_\nu u_1(t,x)-\pa_\nu u_2(t,x)= \pa_\nu u(t,x)=0,\quad (t,x)\in(0,T)\times\pa\M.$$
This clearly implies that $\mathcal N_{a_1,c_1}f=\mathcal N_{a_2,c_2}f$, $f\in\mathcal J\cap H^1_0((0,T)\times\pa\M)$. Finally, using the density of   $\mathcal J\cap H^1_0((0,T)\times\pa\M)$ in $H^1_0((0,T)\times\pa\M)$, we deduce that $\mathcal N_{a_1,c_1}=\mathcal N_{a_2,c_2}$. This proves that the implication \eqref{l4b} holds true and it completes the proof of the lemma.\end{proof}
Similarly to Lemma \ref{l4}, we can prove the following.
\begin{lem}\label{l5} Let the condition of Lemma \ref{l4} be fulfilled and assume that \eqref{t2b} holds true.
Then, we have
\bel{l5a}\mathcal N_{a_1,c_1}f=\mathcal N_{a_2,c_2}f,\quad f\in H^1_0((0,T-\epsilon_1)\times\pa\M).\ee
\end{lem}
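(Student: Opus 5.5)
We follow the argument of Lemma \ref{l4}, now with excitations restricted to $\mathcal J_{\epsilon_1}$. Fix $\epsilon\in(0,\min(\epsilon_0,\epsilon_1)/2)$ and take $f\in\mathcal J_{\epsilon_1}\cap H^1_0((0,T)\times\pa\M)$. By Theorem \ref{t4}, the problem \eqref{eq1} with $(a,c)=(a_j,c_j)$ has a solution $u_j\in C([0,T];H^4(\M))\cap C^2([0,T];H^2(\M))$, $j=1,2$. Set $u=u_1-u_2$. Then $u$ vanishes on $(0,T)\times\pa\M$ and solves
$$\partial_t^2 u-\Delta_\g u+a_1\partial_t u+b\partial_t^\alpha u+c_1u=(a_2-a_1)\partial_tu_2+(c_2-c_1)u_2 .$$
By \eqref{t2b}, both $\pa_\nu u$ and $\pa_\nu\Delta_\g u$ vanish on $(T-\epsilon,T)\times\pa\M$.

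Apply $\pa_\nu$ to this equation and restrict to $(T-\epsilon,T)\times\pa\M$. Several terms drop out:
\begin{itemize}
\item $\partial_t^2\pa_\nu u$, $\pa_\nu\Delta_\g u$ and $a_1\partial_t\pa_\nu u$ vanish, by the data.
\item $\pa_\nu a_1\,\partial_t u$, $\pa_\nu b\,\partial_t^\alpha u$ and $\pa_\nu c_1\, u$ vanish, since $u=0$ on the lateral boundary.
\item On the right-hand side, $u_2=f=0$ on $(T-\epsilon_1,T)\times\pa\M$ because $\supp f\subset[0,T-\epsilon_1]\times\pa\M$. This kills $\pa_\nu(a_2-a_1)\partial_t f$ and $\pa_\nu(c_2-c_1)f$.
\end{itemize}

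The remaining terms involve the differences $a_2-a_1$ and $c_2-c_1$ on the boundary. We use the hypotheses of Lemma \ref{l4}: by \eqref{l4a}, $a_1=a_2$, $c_1=c_2$ and their normal derivatives agree on $(T-\epsilon_0,T)\times\pa\M$. So $(a_2-a_1)\pa_\nu\partial_t u_2$ and $(c_2-c_1)\pa_\nu u_2$ vanish there, and so does every other leftover term. The hypotheses of the theorems, \eqref{t2a} or \eqref{t1b} together with \eqref{t3a}, guarantee \eqref{l4a}. Since $\epsilon<\epsilon_0$ and $\epsilon<\epsilon_1$, we are left with
$$b\,\partial_t^\alpha\pa_\nu u=0\quad\text{on }(T-\epsilon,T)\times\pa\M.$$
By \eqref{t1c}, this gives $\pa_\nu u=\partial_t^\alpha\pa_\nu u=0$ on $(T-\epsilon,T)\times\pa\M$.

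We now invoke the memory property, Theorem \ref{memory}, pointwise in $x\in\pa\M$ for the function $t\mapsto\pa_\nu u(t,x)$. This function has the time regularity needed (it lies in $W^{1,1}(0,T)$ or better, from Theorem \ref{t4} and the trace theorem) and vanishes at $t=0$. Since the function and its Caputo derivative vanish on $(T-\epsilon,T)$, the theorem gives $\pa_\nu u\equiv0$ on $(0,T)\times\pa\M$. Hence $\mathcal N_{a_1,c_1}f=\mathcal N_{a_2,c_2}f$.

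To conclude, we pass to general $f\in H^1_0((0,T-\epsilon_1)\times\pa\M)$, extended by zero. Such $f$ can be approximated in $H^1((0,T)\times\pa\M)$ by smooth functions compactly supported in $(0,T-\epsilon_1)\times\pa\M$. These approximants belong to $\mathcal J_{\epsilon_1}\cap H^1_0((0,T)\times\pa\M)$, since all their time derivatives vanish at $t=0$. The continuity estimate \eqref{p1a} for $\mathcal N_{a_j,c_j}$ then yields \eqref{l5a}.

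The main obstacle is to make sure that every inhomogeneous term vanishes on $(T-\epsilon,T)\times\pa\M$. This relies on two facts together: the support condition on $f$, which uses $\epsilon<\epsilon_1$, and the boundary agreement \eqref{l4a} of the coefficients and their normal derivatives.
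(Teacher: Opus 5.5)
Your proof is correct and is the argument the paper intends: the paper only says Lemma~\ref{l5} follows ``similarly to Lemma~\ref{l4}''. That means taking the normal derivative of the equation for $u_1-u_2$ on $(T-\epsilon,T)\times\pa\M$, using \eqref{l4a} and \eqref{t1c} to get $\pa_\nu u=\partial_t^\alpha\pa_\nu u=0$ there, applying Theorem~\ref{memory} to extend this to $(0,T)\times\pa\M$ for $f\in\mathcal J_{\epsilon_1}$, and concluding by density in $H^1_0((0,T-\epsilon_1)\times\pa\M)$. Two small remarks: the support condition on $f$ is not actually needed to kill $\pa_\nu(a_2-a_1)\partial_t f$ and $\pa_\nu(c_2-c_1)f$, since \eqref{l4a} already does this for $\epsilon<\epsilon_0$; and it is cleaner to apply Theorem~\ref{memory} directly with $X=\pa\M$ to $w=\pa_\nu u\in W^{1,1}(0,T;L^2(\pa\M))$ than pointwise in $x$.
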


The remaining parts of this section will be devoted to the proof of Theorem \ref{t1}, \ref{t2}, \ref{t3} by applying the transfer of data stated in Lemma \ref{l4} and \ref{l5}.

\subsection{Proof of Theorem \ref{t1}}
In view of Lemma \ref{l4},  we are left with the task of proving that the condition
\bel{N}\mathcal N_{a_1,c_1}=\mathcal N_{a_2,c_2}\ee
implies that $a_1=a_2$ and $c_1=c_2$. We divide the proof into three steps. Namely, we start by reformulating the problem in terms of an orthogonality identity. Then, we prove the recovery of the first order coefficient $a$. Finally, we consider the determination of the potential $c$.
\subsubsection{An orthogonality identity}
Fix $f\in \mathcal J\cap H^1_0((0,T)\times\pa\M)$ and $h\in H^1_0((0,T)\times\pa\M)$ and consider $u_j\in C^2([0,T];H^2(\M))$, $j=1,2$ solving \eqref{eq1} with $a=a_j$, $c=c_j$. Thus, $u=u_1-u_2$ satisfies the following conditions
$$
\left\{
\begin{aligned}
& \partial_t^2 u-\Delta_\g u +a_1(t,x)\partial_tu+ b(t,x) \partial_t^{\alpha} u +c_1(t,x)u  =(a_2-a_1) \partial_tu_2+(c_2-c_1) u_2, &&\quad (t,x)\in (0,T)\times \M,\\
& \partial_\nu u(t,x)=u(t,x) = 0,                                                                                                      &&\quad (t,x)\in (0,T)\times\pa\M,\\
& u(0,x) = 0,\quad \partial_t u(0,x) = 0,                                             &&\quad x\in\M.
\end{aligned}
\right.$$
Thus, multiplying this equation by the solution $u_1^*\in C([0,T];H^1_0(\M))\cap C^1([0,T];L^2(\M))$ of \eqref{eq1*} with $a=a_1$, $c=c_1$, that satisfies $\partial_\nu u_1^*\in L^2((0,T)\times\pa\M)$, integrating by parts and applying Lemma \ref{l2}, we obtain
$$\begin{aligned}&\int_0^T\int_\M((a_2-a_1) \partial_tu_2u_1^*+(c_2-c_1) u_2u_1^*)dV_\g(x)dt\\
&=\int_0^T\int_\M(\partial_t^2 u-\Delta_\g u +a_1(t,x)\partial_tu+ b(t,x) \partial_t^{\alpha} u +c_1(t,x)u)u_1^*dV_\g(x)dt\\
&=\int_0^T\int_\M u (\partial_t^2 u_1^*-\Delta_\g u_1^* -a_1\partial_tu_1^*-  \partial_t^{\alpha*} (bu_1^*) +(-\pa_ta_1+c_1)u_1^*)dV_\g(x)dt=0.\end{aligned}$$
This leads to the following orthogonality identity
\bel{ortho}\int_0^T\int_\M((a_2-a_1) \partial_tu_2u_1^*+(c_2-c_1) u_2u_1^*)dV_\g(x)dt=0.\ee
Moreover, by density of $\mathcal J\cap H^1_0((0,T)\times\pa\M)$ in $H^1_0((0,T)\times\pa\M)$, we can deduce that \eqref{ortho} holds true for any solution $u_2$ of \eqref{eq1} with $a=a_2$, $c=c_2$, $f\in H^1_0((0,T)\times\M)$ and  any solution $u_1^*$ of \eqref{eq1*} with $a=a_1$, $c=c_1$, $h\in H^1_0((0,T)\times\M)$.

\subsubsection{Recovery of the coefficient $a$}
We now prove that the  coefficient $a=a_1-a_2$ extended by zero to $\R\times\M_1$ is null. For this purpose, starting from the  orthogonality identity \eqref{ortho}, we show that the light ray transform of $a$ vanishes along every maximal null geodesic. This is achieved by using the geometric optic solutions of the preceding section. The conclusion $a\equiv0$ then follows from the injectivity of the light ray transform, which we now recall.
\begin{definition}
Let $(\R \times \M_1, -dt^2+\g_1)$ be the  Lorentzian manifold associated with $(\M_1,\g_1)$. For  $h \in C_0([0,T] \times \M_1)$, the light ray transform of $h$  along a maximal null geodesic  $\widetilde{\gamma}$ of $\R \times \M_1$ is defined by
\begin{equation}\label{light ray transform def}
\mathcal{L}_{\widetilde{\gamma}} h=\int_{\mathbb{R}} h(\widetilde{\gamma}(s)) ds.
\end{equation}
\end{definition}
 It is well known from  \cite[Proposition 1.4]{FIKO} and \cite{SU}, that since  $(\M_1,\g)$ is simple, the light ray transform $\mathcal L$ is injective. More precisely,  the following implication
\begin{equation}\label{injectivity light ray}
\mathcal{L}_{\widetilde{\gamma}} h=0 \;\mbox{  for all maximal null geodesic  }\; \widetilde{\gamma}\; \mbox{ in } \;\R \times \M \Longrightarrow h \equiv 0
\end{equation}
holds true.  Substituting the geometric optics solutions  \eqref{GO1} in the orthogonality identity \eqref{ortho},  we obtain
$$\begin{aligned}-i \lambda \int_0^T\int_\M a \beta^2 \chi_a dV_\g(x)dt=& -\int_0^T\int_\M \big(a \beta  \chi_{-a_1} \partial_t (\beta \chi_{a_2}) +a \beta \chi_{-a_1} e^{-i \lambda  \psi(t,x)} \partial_t R_{\lambda}
\big) dV_\g(x)dt\\
&-\int_0^T\int_\M \big(a R_\lambda^* \partial_t(\beta \chi_{a_2})e^{i \lambda \psi(t,x)} -i \lambda a \beta \chi_{a_2} e^{i \lambda \psi(t,x)}R_\lambda^*\big) dV_\g(x)dt\\
&-\int_0^T\int_\M \big(a R_\lambda^* \partial_t R_\lambda+c \beta^2 \chi_a+c \beta \chi_{a_2} R_\lambda^* e^{i \lambda \psi(t,x)} \big)  dV_\g(x)dt\\
&-\int_0^T\int_\M \big(c \beta \chi_{-a_1} R_\lambda e^{-i \lambda \psi(t,x)}+ c R_\lambda R_\lambda^*\big)dV_\g(x)dt\\
&:=I_\lambda,
\end{aligned}$$
with $c=c_1-c_2$. Using the regularity of  $a_j,c_j,\beta, \chi_{a_j},\; j=1,2$, together with the decay estimate (\ref{decay}) satisfied by $R_\lambda$ and $R_\lambda^*$,  there exists a constant $C>0$, independent of $\lambda>1$, such that 
$\vert{ I_\lambda }\vert \leq C \lambda^{\alpha }$.
Since $0<\alpha<1$, dividing by $\lambda$ and letting $\lambda \rightarrow \infty$ yields 
\bel{I=0}\int_0^T\int_\M a(t,x) \beta^2(t,x) \chi_a(t,x)  dV_\g(x)dt=0.\ee
We next prove that $$\mathcal{L}_{\tilde{\gamma}}a=0,$$
for every maximal null geodesic $\tilde{\gamma}$. We will use notations  provided by Section \ref{section GOS}.
Let $\tilde{\gamma}\subset{D}$ be an arbitrary maximal null geodesic, and let $\gamma$ denote its projection  onto $\M$.
 Since $\M \subset \M_1^{int}$, the geodesic $\gamma$ extends uniquely to a maximal unit-speed geodesic in  the simple manifold $(\M_1,\g)$. 
Let $y\in \partial  \M_1 \cup \gamma$  be its entrance point into $\M_1$. Then there exists a unique inward-pointing unit vector $\theta \in \partial _+ S\M_1$  such that $\gamma= \gamma_{y,\theta}$, defined on the maximal interval $[0,\tau_+((y,\theta)]$. The corresponding null geodesic in $\R \times \M_1$ is
$$\tilde{\gamma}(t)=(s+t,\gamma_{y,\theta}(t)),\quad 0\leq t \leq \tau_+(y,\theta),$$ 
where $s\in \R$ is fixed. Following the construction of Section \ref{section GOS} and \ref{GON} in the coordinates \eqref{coco}, recalling that $m^{-\frac{1}{2}} dV_\g dt=  dz_0~dz'$ and that $a$ is supported in $\mathcal E_T$, we can rewrite the  identity \eqref{I=0} as follows
$$\begin{aligned}0=&\int_{\{ \mid z' \mid < \epsilon\}}~\int_{l_-}^{l_+}\left(\Psi(z'/\epsilon')\right)^2a(z_0,z')\exp \big({-\frac{1}{4}} \int_{l_-}^{z_0} a(s,z') ds\big)~dz_0~dz'\\
=&\int_{\{ \mid z' \mid < \epsilon\}}\left(\Psi(z'/\epsilon')\right)^2\big[\exp \big({-\frac{1}{4}} \int_{l_-}^{l_+} a(s,z') ds\big)-1\big]~dz'.\end{aligned}$$
Sending $\epsilon' \rightarrow 0$ gives 
$$\exp \big({-\frac{1}{4}} \int_{l_-}^{l_+} a(z_0,0) dz_0\big)=1,$$
and therefore
$$\mathcal{L}_{\tilde{\gamma}}a=\int_{l_-}^{l_+} a(z_0,0) dz_0 \in 8i \pi \Z.$$
Since $a$ is real-valued, it follows 
$\mathcal{L}_{\tilde{\gamma}}a=0$.
In view of condition \eqref{t1a}, we have $\supp{(a)} \subset \mathcal{E}_T$ and we infer that 
$\mathcal{L}_{\tilde{\gamma}}a=0$ for every maximal null geodesic  $\tilde{\gamma}$ in $\R\times\M$. Then, the injectivity of the light ray transform  implies that $a\equiv 0$, 
 which completes the proof of the recovery of the damping coefficient $a$.
 
 \subsubsection{Recovery of the coefficient $c$}
  Similarly to the above argumentation, let us first fix $\tilde{\gamma}\subset{D}$ be an arbitrary maximal null geodesic. Since we have already proved that $a_1=a_2$, the orthogonality identity \eqref{ortho} simplifies to $$\int_0^T~\int_\M~c(t,x)u_2(t,x) u_1^*(t,x)\;  dV_g(x)\; dt=0,$$
where $c=c_2-c_1$, extended by zero to $\R\times \M_1$. We choose the same geometric optics solutions as in Section \ref{section GOS} and \ref{GON} and, applying again the decay estimates (\ref{decay}) and sending $\lambda \rightarrow +\infty$, we find  $$\int_0^T~\int_\M~c(t,x)\beta^2(t,x)\;  dV_g(x)\; dt=0.$$
Stating this identity in the null Fermi coordinates introduced above, we obtain 
 $$\int_{\{ \mid z' \mid < \epsilon\}}~\int_{l_-}^{l_+}(\Psi(z'/\epsilon'))^2~c(z_0,z')~dz_0~dz'=0.$$
Then, sending $\epsilon' \rightarrow 0$ gives
 $$\int_{l_-}^{l_+}c(z_0,0)~dz_0= \mathcal{L}_{\tilde{\gamma}}~c =0.$$
 Combining this identity with condition \eqref{t1a}, we deduce that that 
$\mathcal{L}_{\tilde{\gamma}}c=0$ for every maximal null geodesic  $\tilde{\gamma}$ in $\R\times\M$. Then, the injectivity of the light ray transform implies   $c\equiv0$. This completes the proof of Theorem \ref{t1}.
\subsection{Proof of Theorem \ref{t2} and \ref{t3}}
Assume that the hypotheses of Theorems \ref{t2} and \ref{t3} are satisfied, and that \eqref{t2b}
holds  for some  arbitrary small $\epsilon\in(0,\epsilon_1)$. We also  assume that the conditions \eqref{t1b}-\eqref{t1c} hold, together with  \eqref{t2a} in the time-dependent case and  \eqref{t3a} in the time-independent case.

Arguing exactly as in Section \ref{s5.1} and applying Lemma ~\ref{l5}, we obtain  $\mathcal N_{a_1,c_1}f=\mathcal N_{a_2,c_2}f$  for all $f\in J_{\epsilon_1}\cap H^1_0((0,T-\epsilon_1)\times\pa\M)$. Since $J_{\epsilon_1}\cap H^1_0((0,T-\epsilon_1)\times\pa\M)$ is dense in $H^1_0((0,T-\epsilon_1)\times\pa\M)$, it follows that $\mathcal N_{a_1,c_1}f=\mathcal N_{a_2,c_2}f$,  for all $f\in H^1_0((0,T-\epsilon_1)\times\M)$. Repeating the arguments of  Theorem~\ref{t1}, we obtain  the orthogonality identity
\bel{ortho-bis}\int_0^{T-\epsilon_1}\int_\M((a_2-a_1) \partial_tu_2u_1^*+(c_2-c_1) u_2u_1^*)dV_\g(x)dt=0,\ee
for every solution $u_2$ of \eqref{eq1} with $a=a_2$, $c=c_2$, $f\in H^1_0((0,T-\epsilon_1)\times\M)$ and  every solution $u_1^*$ of the adjoint equation \eqref{eq1*} associated with $a=a_1$, $c=c_1$.
We now use \eqref{ortho-bis} to complete the proofs of  Theorems \ref{t2} and \ref{t3}.

\textbf{Proof of Theorem \ref{t2}.}
The proof follows the same strategy as that of Theorem \ref{t1}. Since $$\supp{a}~ \cup ~\supp{c} \subset \mathcal{E}_{T-\epsilon_0},$$
every maximal null geodesic intersecting the support of the unknown coefficients admits a tubular neighborhood contained in 

$$\mathcal D_{T-\epsilon_0}:=\{(t,x)\in (0,T-\epsilon_0)\times\M :\  \textrm{dist}{(x,\pa \M)}<t<T-\epsilon_0-\textrm{dist}{(x,\pa \M)}\}.$$ Consequently, the geometric optics solutions constructed in Section~\ref{section GOS} may
be chosen with support entirely contained in this neighborhood and therefore
away from the hypersurfaces $t=0$ and $t=T-\epsilon_0$. Moreover, since $\epsilon <\frac{\min(\epsilon_0,\epsilon_1)}{2}$, the boundary traces of the forward geometric optics solutions are supported in $[0,T-\epsilon_1]\times \pa \M$, while the adjoint solutions vanish on the portion of the boundary where no measurements are available. Hence, the construction of the geometric optics solutions and the limiting
argument of the proof of Theorem~\ref{t1} remain valid without any
modification.

Substituting these geometric optics solutions into \eqref{ortho-bis}  and letting $\lambda \to +\infty$  yields exactly the same integral identities as in the proof of Theorem \ref{t1}. Passing to null Fermi coordinates  then shows that  the light ray transforms of both $a$ and $c$  vanish along every maximal null geodesic. The injectivity of the light ray transform therefore implies $a=c=0$, which completes the proof.

\textbf{Proof of Theorem \ref{t3}.}
Assume now that the coefficients $a_j$  and $c_j$  are independent of the time variable. They can therefore be  regarded as functions on $[0,T]\times \M$ that are constant with respect to $t$. The assumptions \eqref{t1b} and \eqref{t3a}  ensure the  conditions required to derive the orthogonality identity \eqref{ortho-bis}. 

The geometric optics solutions constructed in the proof of
Theorem~\ref{t2} are still applicable, since their amplitudes are localized
near maximal null geodesics contained in the accessible region and their
boundary traces belong to $J_{\epsilon_1}$.
Repeating the previous argument, we conclude that the light ray transforms
of $a$ and $c$ vanish along every maximal null geodesic.

Since $a=a_1-a_2$ and $c=c_1-c_2$ are independent of $t$, the light ray transform reduces,
up to an inessential multiplicative constant arising from the affine
parametrization of the null geodesic, to the geodesic ray transform on  $(\M_1,\g_1)$ of $a$, extended by zero to $\M_1$, is defined by $\mathcal{I}_{\gamma}(a)=\int_0^{\tau_+(y,\theta)} a(\gamma(s))~ds$. Therefore, $$\mathcal L_{\tilde{\gamma}}a=0 \quad\Longleftrightarrow\quad
\mathcal I_{\gamma}a=0.$$ Since the geodesic ray transform is injective on simple manifolds, we obtain $a\equiv0$. The same argument applies to $c$, yielding $c\equiv0$. This completes the proof.

\appendix
\section{Appendix}
Let us first recall the definition of the Riemann-Liouville time fractional derivative $D^\alpha_t$ of order $\alpha$
defined on $L^1(0,T; L^2(X))$, $X=\M$ or $X=\pa\M$, by
$$D^\alpha_tw(t,x)=\partial_tI^{1-\alpha}_tw(t,x),\quad w\in L^1(0,T; L^2(X)),\ (t,x)\in(0,T)\times X.$$
Here the fractional time integral operator $I^{1-\alpha}_t$ is defined by \eqref{I}, with $\gamma=1-\alpha$, and, observing that, for all $w\in L^1(0,T; L^2(X))$,  $I^{1-\alpha}_tw\in L^1(0,T; L^2(X))$, we deduce that $D^\alpha_tw$ can be seen as an element of $D'(0,T;L^2(X))$. Note also that we have
\bel{RC}D^\alpha_t[w-w(0,\cdot)](t,x)=\partial_t^\alpha w(t,x),\quad w\in W^{1,1}(0,T; L^2(X)),\ (t,x)\in(0,T)\times X.\ee
We consider the following result, which highlights the memory effect associated with the time-fractional derivative. To the best of our knowledge, it is established in the most general setting and under the weakest regularity assumptions available to date.

\begin{thm}\label{memory}
Let $w\in L^1(0,T;L^2(X))$, $h\in L^2(X)$, and fix $\epsilon\in(0,T)$. Then the condition
\bel{tma}
w(t,x)=D_t^\alpha (w-h(x))(t,x)=0,\ (t,x)\in (T-\epsilon,T)\times X\ee
implies that 
\bel{tmab}w(t,x)=h(x)=0,\ (t,x)\in (0,T)\times X.\ee

\end{thm}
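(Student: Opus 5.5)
The plan is to reduce to scalar functions and then combine analytic continuation in $t$ with a moment argument. Fix $\varphi\in L^2(X)$ and set $v_\varphi(t)=\langle w(t,\cdot)-h,\varphi\rangle_{L^2(X)}\in L^1(0,T)$ and $h_\varphi=\langle h,\varphi\rangle$. It suffices to show $v_\varphi\equiv0$ on $(0,T)$ and $h_\varphi=0$ for every $\varphi$. Write $k(s)=s^{-\alpha}/\Gamma(1-\alpha)$ and $\tau=T-\epsilon$. By \eqref{tma}, $\partial_t I^{1-\alpha}_t v_\varphi=0$ in $\mathcal D'(\tau,T)$, so $I^{1-\alpha}_t v_\varphi=K$ a.e. on $(\tau,T)$ for some constant $K$. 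On that interval we also have $v_\varphi=-h_\varphi$. Splitting the integral at $\tau$ therefore gives, for a.e. $t\in(\tau,T)$,
$$g(t):=\int_0^{\tau}k(t-s)v_\varphi(s)\,ds=K+h_\varphi\,\frac{(t-\tau)^{1-\alpha}}{\Gamma(2-\alpha)}.$$

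The first key step is analytic continuation. Since $v_\varphi\in L^1(0,\tau)$, the function $z\mapsto\int_0^\tau (z-s)^{-\alpha}v_\varphi(s)\,ds/\Gamma(1-\alpha)$, with principal branch, is holomorphic on $\mathbb C\setminus(-\infty,\tau]$. The right-hand side is also holomorphic there. The two sides agree on a subinterval of $(\tau,T)$, so they coincide on all of $\mathbb C\setminus(-\infty,\tau]$, in particular on the whole half-line $(\tau,\infty)$.

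Next, let $t\to+\infty$. On one hand $|g(t)|\le (t-\tau)^{-\alpha}\|v_\varphi\|_{L^1}/\Gamma(1-\alpha)\to0$. On the other hand the right-hand side grows like $t^{1-\alpha}$ unless $h_\varphi=0$. Hence $h_\varphi=0$, and then $K=0$. So $g\equiv0$ on $(\tau,\infty)$.

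The remaining and main step is to deduce $v_\varphi=0$ on $(0,\tau)$ from the vanishing of this one-parameter family of Riesz-type integrals. For $t>\tau\ge s$ expand
$$(t-s)^{-\alpha}=t^{-\alpha}\sum_{n\ge0}\frac{(\alpha)_n}{n!}\Big(\frac{s}{t}\Big)^n.$$
This series converges uniformly for $s\in[0,\tau]$ once $t>\tau$. Integrating term by term gives, for every $t>\tau$,
$$0=t^{\alpha}\Gamma(1-\alpha)g(t)=\sum_{n\ge0}\frac{(\alpha)_n}{n!}\,t^{-n}\int_0^\tau s^n v_\varphi(s)\,ds.$$
This is a power series in $1/t$ vanishing on an interval, so every coefficient vanishes. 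Since $(\alpha)_n\neq0$ for $\alpha\in(0,1)$, all moments $\int_0^\tau s^nv_\varphi(s)\,ds$ vanish. Density of polynomials in $C[0,\tau]$ then yields $v_\varphi=0$ a.e. on $(0,\tau)$.

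Combined with $v_\varphi=-h_\varphi=0$ on $(\tau,T)$, this gives $v_\varphi\equiv0$. Letting $\varphi$ range over $L^2(X)$ yields $h=0$ and $w=v+h=0$, which is \eqref{tmab}.

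The points needing care are measure-theoretic. First, one must check that "$D^\alpha_t=0$ in $\mathcal D'$ on an interval" correctly yields the a.e. constant $K$. Second, one must justify interchanging the pairing with $\varphi$ and the time integrals, which follows from Fubini since $w\in L^1(0,T;L^2(X))$.
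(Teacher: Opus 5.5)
Your proof is correct. After the first step it takes a different route from the paper.

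The two proofs start the same way. Both pair with a fixed element of $L^2(X)$. Both note that for $t>T-\epsilon$ the fractional integral of the paired function is $\int_0^{T-\epsilon}k(t-s)(\cdot)\,ds$ plus an explicit term proportional to $h$. Both then use analyticity in $t$ to carry the constancy from $(T-\epsilon,T)$ to the whole half-line $(T-\epsilon,\infty)$.

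The paper then extends $w$ by zero and takes the Laplace transform of $I^{1-\alpha}_t(w_\psi-h_\psi)$. This gives an identity between a function entire in $p$ and $p^\alpha\int_0^{T-\epsilon}e^{-ps}w_\psi(s)\,ds-p^{\alpha-1}h_\psi$. The paper continues this identity to $\mathbb C\setminus(-\infty,0]$ and compares the limits from the two sides of the cut. This produces the factor $2i\sin(\alpha\pi)\neq0$ and the relation $\int_0^{T-\epsilon}e^{rs}w_\psi(s)\,ds=-h_\psi/r$ for $r>0$. From it, $h_\psi=0$ by letting $r\to0$, and $w_\psi=0$ by injectivity of the Laplace transform.

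You stay in the time domain instead. The decay $g(t)=O(t^{-\alpha})$, set against the growth of $(t-T+\epsilon)^{1-\alpha}$, disposes of $h_\varphi$ and $K$ at once. The binomial expansion of $(t-s)^{-\alpha}$ in powers of $s/t$ then turns $g\equiv0$ into the vanishing of all moments of $v_\varphi$ on $(0,T-\epsilon)$, using $(\alpha)_n\neq0$.

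The two endpoints are closely related: the Taylor coefficients at $r=0$ of the paper's entire function $r\mapsto\int_0^{T-\epsilon}e^{rs}w_\psi(s)\,ds$ are exactly your moments. Your route is more elementary. It avoids transforms over $(0,\infty)$, the holomorphy bookkeeping in $p$, and the boundary values on the branch cut. The paper's route fits the standard Laplace-transform toolkit of fractional calculus and encodes both conclusions in a single identity. Both use $\alpha\in(0,1)$ essentially: the paper through $\sin(\alpha\pi)\neq0$, yours through $-\alpha<0<1-\alpha$.

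One small point, shared with the paper, concerns your last line. The null set where $\langle w(t),\varphi\rangle\neq0$ depends on $\varphi$. So concluding $w=0$ requires a countable dense family in the separable space $L^2(X)$, which is harmless.
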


\begin{proof}
A similar result was proved in \cite[Step 1, Theorem 1.1]{HJKSZ} under stronger regularity assumptions. Here, we extend the analysis of \cite[Step 1, Theorem 1.1]{HJKSZ} by substantially relaxing the regularity assumptions and providing an alternative proof. Assume that \eqref{tma} holds true. We extend $w$ by zero to a map defined on $(0,+\infty)\times X$ still denoted by $w$, we fix $\psi\in L^2(X)$  and we consider $$w_\psi(t)=\left\langle w(t,\cdot),\psi\right\rangle_{L^2(X)},\quad h_\psi=\left\langle h,\psi\right\rangle_{L^2(X)},\quad t\in(0,+\infty).$$
Then, we find
$$I^{1-\alpha}_tw_\psi(t)=\int_0^{T-\epsilon}\frac{(t-s)^{-\alpha}}{\Gamma(1-\alpha)}w_\psi(s)ds,\quad I^{1-\alpha}_th_\psi(t)=h_\psi\frac{t^{1-\alpha}}{\Gamma(2-\alpha)},\quad t\in (T-\epsilon,+\infty)$$
and one can check that $t\mapsto I^{1-\alpha}_t(w_\psi-h_\psi)(t)$ is analytic on $(T-\epsilon,+\infty)$. On the other hand, we have
$$\partial_tI^{1-\alpha}_t(w_\psi-h_\psi)(t)=D_t^\alpha (w_\psi-h_\psi)(t)=\left\langle D_t^\alpha (w-h)(t,\cdot),\psi\right\rangle_{L^2(X)}=0,\quad t\in (T-\epsilon,T)$$
and by isolated zero theorem we deduce that
$\partial_tI^{1-\alpha}_t(w_\psi-h_\psi)=0$ on $(T-\epsilon,+\infty)$. It follows that 
$$p\int_T^{+\infty}\underbrace{I^{1-\alpha}_t(w_\psi-h_\psi)(t)}_{=I^{1-\alpha}_t(w_\psi-h_\psi)(T)}e^{-pt}dt=I^{1-\alpha}_t(w_\psi-h_\psi)(T)e^{-pT},\quad p>0.$$
In addition, for all $p\in(0,+\infty)$, we find
$$\begin{aligned}p\int_0^{+\infty}e^{-pt}I^{1-\alpha}_t(w_\psi-h_\psi)(t)dt&=p\int_0^{+\infty}\int_0^{t}\frac{e^{-p(t-s)}(t-s)^{-\alpha}}{\Gamma(1-\alpha)}e^{-ps}w_\psi(s)ds-h_\psi p\int_0^\infty e^{-pt}\frac{t^{1-\alpha}}{\Gamma(2-\alpha)}dt\\
&=p^\alpha\int_0^{+\infty}e^{-ps}w_\psi(s)ds-p^{\alpha-1}h_\psi\\
&=p^\alpha\int_0^{T-\epsilon}e^{-ps}w_\psi(s)ds-p^{\alpha-1}h_\psi.\end{aligned}$$
Combining this two identities, for all $p\in(0,+\infty)$, we obtain
$$\begin{aligned} I^{1-\alpha}_t(w_\psi-h_\psi)(T)e^{-pT}+p\int_0^Te^{-pt}I^{1-\alpha}(w_\psi-h_\psi)(t)dt&=p\int_0^{+\infty}e^{-pt}I^{1-\alpha}_t(w_\psi-h_\psi)(t)dt\\
&=p^\alpha\int_0^{T-\epsilon}e^{-ps}w_\psi(s)ds-p^{\alpha-1}h_\psi.\end{aligned}$$
Recalling that $p\mapsto I^{1-\alpha}_t(w_\psi-h_\psi)(T)e^{-pT}+p\int_0^Te^{-pt}I^{1-\alpha}_t(w_\psi-h_\psi)(t)dt$ and $p\mapsto \int_0^{T-\epsilon}e^{-ps}w_\psi(s)ds$ are holomorphic in $\mathbb C$ and applying again isolated zero theorem, for all $p\in\mathbb C\setminus(-\infty,0]$, we find
\bel{tmb}I^{1-\alpha}_t(w_\psi-h_\psi)(T)e^{-pT}+p\int_0^Te^{-pt}I^{1-\alpha}_t(w_\psi-h_\psi)(t)dt=p^\alpha\int_0^{T-\epsilon}e^{-ps}w_\psi(s)ds-p^{\alpha-1}h_\psi,\ee
where $p^\alpha=e^{\alpha\log(p)}$, with $\log$ the complex logarithm defined on $\mathbb C\setminus(-\infty,0]$. Now choosing in \eqref{tmb}, $p=re^{i\theta}$, $r>0$, $\theta\in(-\pi,\pi)$, sending $\theta\to\pm\pi$ and taking the difference we get
$$\begin{aligned}0&=I^{1-\alpha}_t(w_\psi-h_\psi)(T)e^{rT}-r\int_0^Te^{rt}I^{1-\alpha}_t(w_\psi-h_\psi)(t)dt\\
&\ \ \ -\left(I^{1-\alpha}_t(w_\psi-h_\psi)(T)e^{rT}-r\int_0^Te^{rt}I^{1-\alpha}_t(w_\psi-h_\psi)(t)dt\right)\\
&=r^\alpha e^{i\alpha \pi}\int_0^{T-\epsilon}e^{rs}w_\psi(s)ds+ r^{\alpha-1}e^{i\alpha \pi}h_\psi -r^\alpha e^{-i\alpha \pi}\int_0^{T-\epsilon}e^{rs}w_\psi(s)ds-r^{\alpha-1}e^{-i\alpha \pi}h_\psi,\quad r>0.\end{aligned}$$
It follows that
$$2i\sin(\alpha\pi)\left(\int_0^{T-\epsilon}e^{rs}w_\psi(s)ds+r^{-1}h_\psi\right)=0,\quad r>0,$$
and, since $\alpha\pi\in(0,\pi)$, we deduce that 
$$\int_0^{T-\epsilon}e^{rs}w_\psi(s)ds=-h_\psi r^{-1},\quad r>0.$$
Recalling that 
$$\lim_{r\to0}\int_0^{T-\epsilon}e^{rs}w_\psi(s)ds=\int_0^{T-\epsilon}w_\psi(s)ds,$$
we deduce that $h_\psi=0$ and
$$\int_0^{+\infty}e^{-(-r)s}w_\psi(s)ds=\int_0^{T-\epsilon}e^{rs}w_\psi(s)ds=0,\quad r>0.$$
Combining this with  isolated zero theorem and the injectivity of the Laplace transform, we obtain $w_{\psi}\equiv0$ and $h_\psi=0$. Finally, recalling that $\psi\in L^2(X)$ is arbitrary chosen, we get $w(t,x)=h(x)=0$, $(t,x)\in(0,+\infty)\times X$, which proves that \eqref{tmab} holds true. This completes the proof of the theorem.
\end{proof}

%%%%%%%%%%%%%%%%%%%%%%%%%%%%%%%%%%%%%%%%%%%%%%%%%
%%%%%%%%%%%%%%%%%%%%%%%%%%%%%%%%%%%%%%%%%%%%%%%%%
%%%%%%%%%%%%%%%%%%%%%%%%%%%%%%%%%%%%%%%%%%%%%%%%%

\end{document}